\documentclass[11pt]{amsart}

\usepackage{a4wide,amsmath,amssymb}
\usepackage{dsfont}
\usepackage{epsfig}
\usepackage{graphicx}
\usepackage{subfig}
\usepackage{verbatim}
\usepackage{multicol}
\usepackage{hhline}

\newtheorem{theorem}{Theorem}
\newtheorem{proposition}[theorem]{Proposition}
\newtheorem{lemma}[theorem]{Lemma}
\newtheorem{corollary}[theorem]{Corollary}
\newtheorem{algorithm}[theorem]{Remark}

\newtheorem{conjecture}{Conjecture}
\newtheorem*{example*}{Example}

\newcommand{\bM}{\mathbf{M}}
\newcommand{\rM}{\mathrm{M}}
\newcommand{\mmm}{{\sc mmm}}

\usepackage{pgf,tikz}
\usepackage{tkz-fct}
\usepackage{pgfplots}
\usetikzlibrary{arrows,trees}
\definecolor{darkergreen}{RGB}{0,153,0}
\pgfplotsset{ every non boxed x axis/.append style={x axis line style=<->},
     every non boxed y axis/.append style={y axis line style=<->}, every axis/.append style={font=\small}}



\begin{document}
\title[]
{Geometrical properties of the mean-median map}
\author{Jonathan Hoseana and Franco Vivaldi}
\address{School of Mathematical Sciences, Queen Mary,
University of London,
London E1 4NS, UK}

\begin{abstract}
We study the mean-median map as a dynamical system on the space of 
finite sets of piecewise-affine continuous functions with rational coefficients.
We determine the structure of the limit function in the neighbourhood
of a distinctive family of rational points, the local minima.
By constructing a simpler map which represents the dynamics in such 
neighbourhoods, we extend the results of Cellarosi and Munday \cite{CellarosiMunday} 
by two orders of magnitude. 
Based on these computations, we conjecture that the Hausdorff dimension of the 
graph of the limit function of the set $[0,x,1]$ is greater than 1.
\end{abstract}
\date{\today}

\maketitle

\section{Introduction}\label{section:Introduction}

Consider a finite multiset\footnote{Hereafter shall be referred to simply as a \textit{set}. We use square brackets to distinguish it from an ordinary set.} $\left[x_1,\ldots,x_n\right]$ of real numbers. 
Its \textit{arithmetic mean} and \textit{median} are defined, respectively, as
$$\langle x_1,\ldots,x_n\rangle:=\frac{1}{n}\sum_{i=1}^n x_i\quad\text{and}\quad\mathcal{M}\left(x_1,\ldots,x_n\right):=\begin{cases}
x_{j_{\frac{n+1}{2}}}&n\text{ odd}\\
\frac{1}{2}\left(x_{j_{\frac{n}{2}}}+x_{j_{\frac{n}{2}+1}}\right)&n\text{ even},
\end{cases}$$
where $k\mapsto j_k$ is a permutation of indices $\{1,\ldots,n\}$ for which $x_{j_1}\leqslant x_{j_2}\leqslant \cdots\leqslant x_{j_n}$.

The set $\left[x_1,\ldots,x_n\right]$ may be enlarged by adjoining to it a new real number $x_{n+1}$ uniquely determined 
by the stipulation that the arithmetic mean of the enlarged set be equal to the median 
of the original set.
In symbols:
\begin{equation}\label{eq:mmm}
x_{n+1}=(n+1)\mathcal{M}\left(x_1,\ldots,x_n\right)-n\left\langle x_1,\ldots,x_n\right\rangle.
\end{equation}
The map $\left[x_1,\ldots,x_n\right]\mapsto\left[x_1,\ldots,x_n,x_{n+1}\right]$ is known as the \textit{mean-median map} (\mmm). This map and its iteration was introduced in \cite{SchultzShiflett}, and subsequently studied in \cite{ChamberlandMartelli,CellarosiMunday,Hoseana}.
Such an iteration is meant to generate a set whose mean and median coincide.
The order of the recursion grows with the iteration, so that the \mmm\ is a dynamical 
system over the infinite-dimensional space of all finite sets of 
real numbers. 

The \mmm\ dynamics is novel and intriguing.
The evolution of the system depends sensitively on a small cluster of points located 
around the median, but only on the average ---rather than the detailed values---  
of the rest of the data. 
From \eqref{eq:mmm} we see that each new element of the set typically results from the 
difference of two diverging quantities, which is a distinctive source of instability. 
Finally, the dependence of the median on smooth variations of the data is not smooth, 
adding complexity.

Since the \mmm\ \textit{preserves affine-equivalences}\footnote{If $\left[x_1,\ldots,x_n\right]\mapsto \left[x_1,\ldots,x_n,x_{n+1}\right]$ then for every $a,b\in\mathbb{R}$ we have $\left[ax_1+b,\ldots,ax_n+b\right]\mapsto\left[ax_1+b,\ldots,ax_n+b,ax_{n+1}+b\right]$.} \cite[section 3]{ChamberlandMartelli}, 
the simplest non-trivial initial sets ---those of size three--- may be
studied in full generality by considering the single-variable initial set 
$[0,x,1]$, with $x\in[0,1]$ referred to as the \textit{initial condition}.
Here one finds already substantial difficulties, which are synthesised in the following 
conjectures \cite{SchultzShiflett,ChamberlandMartelli}.

\begin{conjecture} [Strong terminating conjecture]
For every $x\in[0,1]$ there is an integer $\tau$ such that $x_{\tau+k}=x_\tau$ for all $k\in\mathbb{N}$.
\end{conjecture}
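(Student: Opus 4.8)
The plan is to recast the conjecture entirely in terms of the median. Write $S_n=\sum_{i=1}^n x_i$, let $\mu_n=S_n/n$ be the running mean and $\nu_n=\mathcal M(x_1,\ldots,x_n)$ the running median. Then \eqref{eq:mmm} reads $x_{n+1}=(n+1)\nu_n-n\mu_n$, and two elementary identities follow:
\[
\mu_{n+1}=\frac{S_n+x_{n+1}}{n+1}=\nu_n,
\qquad
x_{n+1}-\nu_n=n(\nu_n-\mu_n)=n(\nu_n-\nu_{n-1}).
\]
The first shows the mean is slaved to the previous median, so the \emph{only} genuine dynamical variable is the median sequence $(\nu_n)$; the second shows that each new point overshoots the current median by $n$ times the most recent median increment. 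From these, if $\nu_n=m$ for all $n\ge N$ then $S_n=nm$ for all $n\ge N+1$, whence $x_n=m$ for all $n\ge N+2$. Thus the strong terminating conjecture is \emph{equivalent} to the assertion that $(\nu_n)$ is eventually constant, and this is what I would prove.

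Next I would analyse the combinatorial update of the median. At step $n$ record the sorted triple $(a_n,p_n,b_n)$ of values lying below, equal to, and above $\nu_n$, so that inserting $x_{n+1}=\nu_n+n\,(\nu_n-\nu_{n-1})$ changes $(a_n,p_n,b_n)$ and thereby produces $\nu_{n+1}$. The sign rule $x_{n+1}-\nu_n=n(\nu_n-\mu_n)$ makes the map self-correcting: when the mean sits below the median the new point is inserted above it, and conversely. The aim of this step is to establish two a priori facts — that the orbit is bounded (an estimate of Chamberland--Martelli type) and that the protective margin of the current median value, namely the imbalance $|a_n-b_n|$ required to dislodge it, grows along the orbit — and to combine them with the amplification factor $n$ to obtain a feedback inequality forcing $\nu_n-\nu_{n-1}\to0$.

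The decisive difficulty is to upgrade $\nu_n-\nu_{n-1}\to0$ to $\nu_n-\nu_{n-1}=0$ eventually, for \emph{every} $x$ rather than merely generically: one must exclude orbits whose median converges to a limit through infinitely many ever-smaller nonzero increments. Here the plan is to exploit arithmetic rigidity. For rational $x$ every $x_n$ lies in a lattice $\tfrac1{d_n}\mathbb Z$, so at each step the attainable median values are discrete with a minimal gap $g_n$, and a median increment smaller than $g_n$ must vanish. One would show that the geometric-type decay of $|\nu_n-\nu_{n-1}|$ from the previous step eventually beats the arithmetic granularity $g_n$, locking the median and, by the reduction, terminating the orbit. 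The renormalisation map of this paper, which represents the \mmm\ dynamics in the neighbourhood of the local minima, would then transfer termination from a dense self-similar family of rational base points to their neighbourhoods, and a limiting enclosure argument would be needed to reach the irrational $x$.

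The step I expect to be the genuine obstacle is exactly this granularity-versus-decay comparison, made uniform over $[0,1]$. Overtaking $g_n$ requires controlling the growth of the denominators $d_n$, which the halving of even-length medians and the division by $n$ in \eqref{eq:mmm} cause to increase in an irregular, strongly $x$-dependent fashion; there is no evident reason the decay of the increments should outrun this growth uniformly. It is precisely the conceivable survival of a convergent-but-non-terminating orbit on a thin exceptional set resisting both renormalisation and enclosure that keeps the statement conjectural, and closing this last gap for every point of $[0,1]$ is where the real work lies.
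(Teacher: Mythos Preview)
The paper does not prove this statement: it is stated as an open problem (the \emph{strong terminating conjecture}) and remains so throughout. The paper's contributions are partial --- a global proof for the related system $[0,x,1,1]$ (theorem~\ref{thm:0x11}), local verifications in neighbourhoods of several thousand rational X-points of $[0,x,1]$, and the normal-form reduction of section~\ref{section:ReducedSystem} --- but no argument covering all $x\in[0,1]$ is given or claimed. There is therefore no ``paper's own proof'' to compare against.

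Your elementary identities $\mu_{n+1}=\nu_n$ and $x_{n+1}-\nu_n=n(\nu_n-\nu_{n-1})$ are correct (this is exactly \eqref{eq:mmm2}), and you are right that termination is equivalent to eventual constancy of the median. But the plan has gaps beyond the one you already flag. You invoke orbit boundedness as ``a Chamberland--Martelli type estimate'', yet what \cite{ChamberlandMartelli} establishes is monotonicity of the median sequence, not any bound on $(x_n)$ or even on $(\nu_n)$; the computations in figures~\ref{fig:m} and~\ref{fig:haltingproblem55} show the iterates can become large in $x$-dependent ways, and no uniform a~priori bound is known. The granularity-versus-decay comparison is also worse than you suggest: corollary~\ref{cor:effexp} shows the dyadic denominators stay small only during a short initial regular phase of the normal form, and their subsequent growth is not controlled. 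Finally, the ``limiting enclosure'' to irrationals is unsupported by the evidence --- the paper itself recalls computations at $x=(\sqrt5-1)/2$ from \cite{Hoseana} suggesting that the transit time may be \emph{infinite} at some irrationals, which would falsify the conjecture outright. Your outline is an honest map of where the obstacles lie, but it is not a proof, and the paper does not supply one either.
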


Such an integer $\tau$, if it exists, will be assumed to be minimal, thereby defining
a real function $x\mapsto\tau(x)$, called the \textit{transit time}. At values of $x\in[0,1]$ for which such an integer does not exist, we set $\tau(x)=\infty$.
If the \textit{orbit} $\left(x_n\right)_{n=1}^\infty$ converges at $x$ ---with finite or infinite transit time--- then we have a real function $x\mapsto m(x)$, called the \textit{limit function}, which gives the limit of 
this sequence as a function of the initial condition.
This function has an intricate, distinctive structure, shown in figure \ref{fig:m}. Moreover, since the \textit{median sequence} $\left(\mathcal{M}_n\right)_{n=3}^\infty$, where $\mathcal{M}_n:=\mathcal{M}\left(x_1,\ldots,x_n\right)$, is monotonic \cite[theorem 2.1]{ChamberlandMartelli}, writing 
\begin{eqnarray}\label{eq:msum}
m&=&\lim_{N\to\infty}\mathcal{M}_N\nonumber\\
&=&\lim_{N\to\infty}\left[\mathcal{M}_3+\sum_{n=4}^N\left(\mathcal{M}_n-\mathcal{M}_{n-1}\right)\right]\nonumber\\
&=&\mathcal{M}_{3}+\sum_{n=4}^\infty\left(\mathcal{M}_n-\mathcal{M}_{n-1}\right),
\end{eqnarray}
we see that, in a domain of monotonicity, the limit function is the sum of non-negative (say) piecewise-affine continuous functions with finitely many singularities\footnote{In this paper, a singularity means a point of non-differentiability.}.

\begin{figure}[t]
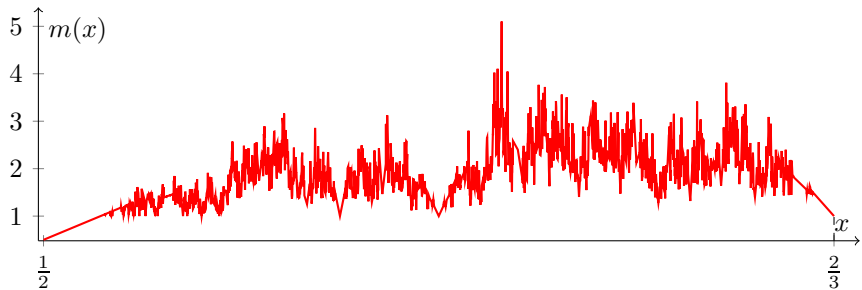

\centering
\include{fig-m}
\caption{\label{fig:m}\rm\small
A computer-generated image of $m(x)$ for $x\in\left[\frac{1}{2},\frac{2}{3}\right]$.
The limit function $m$ may be reconstructed entirely from its values 
in this interval, due to symmetries. The local minima occur at
a prominent set of rational numbers.
}
\end{figure}

\begin{conjecture} [Continuity conjecture]
The function $x\mapsto m(x)$ is continuous.
\end{conjecture}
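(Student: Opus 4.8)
The plan is to reduce the conjecture to a statement of \emph{locally uniform} convergence, and then to establish that convergence by a renormalisation argument near the local minima. On a domain of monotonicity $I$ the limit function is, by \eqref{eq:msum}, the increasing pointwise limit of the continuous piecewise-affine partial sums $\mathcal{M}_N$; hence, by Dini's theorem, continuity of $m$ on $I$ is \emph{equivalent} to the assertion that the non-negative tails $\sum_{n>N}(\mathcal{M}_n-\mathcal{M}_{n-1})$ tend to $0$ uniformly on every compact $K\subset I$ --- equivalently, that no ``mass'' escapes to $n=\infty$ as $N\to\infty$. By the symmetries of $m$ indicated in Figure \ref{fig:m} it suffices to work on $[\tfrac12,\tfrac23]$, which is a union of domains of monotonicity, so everything comes down to this uniform tail estimate.

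First I would dispose of the generic points. Wherever the transit time $\tau$ is bounded on a neighbourhood, only finitely many of the functions $\mathcal{M}_n-\mathcal{M}_{n-1}$ are non-constant there, so $m$ coincides locally with a fixed $\mathcal{M}_N$ and is piecewise-affine, hence continuous. The only candidates for a discontinuity are thus the points at which the singularities of the $\mathcal{M}_n$ accumulate, and --- granting the description of $m$ near the local minima obtained in this paper --- those all lie in the closure $\overline{L}$ of the set $L$ of local minima. The conjecture is therefore equivalent to: $m(x)\to m(p)$ as $x\to p$, for every $p\in\overline{L}$.

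The heart of the matter would be the behaviour at a local minimum $p\in L$, and here I would use a reduced map of the kind announced in the abstract. A one-sided neighbourhood $U$ of $p$ carries a self-similar combinatorial structure, and an explicit affine contraction $\phi$ with fixed point $p$ should conjugate the \mmm\ dynamics on $\phi(U)$ to that on $U$ up to a controlled piecewise-affine correction. This ought to yield a functional relation of the form $m\circ\phi=\lambda\,m+(\text{affine})$ on $U$ with a vertical contraction factor $|\lambda|<1$ read off from the reduced map. Writing $I_k:=\phi^{k}(U)$, a telescoping estimate then forces $\mathrm{osc}(m;I_k)\to 0$ as $k\to\infty$; since the $I_k$ shrink to $p$, any $x$ sufficiently close to $p$ lies in $I_k$ for large $k$, so $m$ has a one-sided limit at $p$, and the self-similarity pins that limit to $m(p)$. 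Doing this on both sides of $p$ (the two sides being governed by symmetric copies of the same reduced map), and, when $\overline{L}\neq L$, handling an accumulation point $p$ of $L$ by a compactness argument over the renormalisation data of the local minima converging to it, would complete the proof.

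\emph{Main obstacle.} Running the renormalisation at a \emph{single} local minimum is, in principle, a finite computation once the reduced map is in hand; the real difficulty is uniformity. One has to show that the affine correction in $m\circ\phi=\lambda\,m+(\text{affine})$ genuinely stays bounded under iteration --- that the reduced map tracks the true \mmm\ dynamics with a non-growing error --- and that the contraction factors $|\lambda|$ stay bounded away from $1$ across the whole, self-accumulating family $L$, and one must then control the limiting behaviour at the accumulation points of $L$. All of this seems to require rather complete information about the word combinatorics of the orbits of the \mmm, the same information that underlies the Strong terminating conjecture; indeed the ``easy'' reduction above already presupposes that $\tau$ is locally bounded off $\overline{L}$, which is a local form of that conjecture. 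This is why the statement is only a conjecture: the paper supplies the reduced map, the structure of $m$ near the local minima, and substantial numerical evidence, but the uniform estimates needed to turn this scheme into a proof remain out of reach.
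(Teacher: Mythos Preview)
The paper contains no proof of this statement: it is an open conjecture, and the paper makes no claim to settle it. You recognise this yourself in your final paragraph, so in that sense your assessment is honest and correct. What you have written is a heuristic programme rather than a proof, and the paper does not offer a competing proof to compare it with.

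That said, your programme does not quite line up with the machinery the paper actually builds, and this matters if you intend to pursue it. The self-equivalences established here (theorems \ref{thm:GeneralSymmetry}, \ref{thm:monotonicityreversing}, \ref{theorem:inheritance}) are of the form $f(m(x))=m(\mu(x))$ with $\mu$ a M\"obius map exchanging the two \emph{sides} of an X-point; $\mu$ is typically an involution, not a contraction toward $p$, so there is no ready-made $\phi$ with $|\lambda|<1$ to iterate. The paper's actual local description of $m$ near an X-point is the \emph{dichotomy} of theorems \ref{thm:Generalrk2} and \ref{thm:Generalrk1} (and lemma \ref{lemma:0x11}): either $m$ is continuous at $p$ with an explicit tent shape, or it is discontinuous there. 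Deciding between the two alternatives is exactly what is not known in general --- for $[0,x,1,1]$ it is settled by a single numerical evaluation (theorem \ref{thm:0x11}), and for general X-points it reduces to stabilisation of the normal form orbit (proposition \ref{prop:equivalenceNF}, conjecture 4). So the ``contraction'' you posit is not something the paper supplies; what the paper supplies is a reduction of continuity at each X-point to an arithmetic halting problem, together with evidence (section \ref{section:Computations}) that the regions where this halting problem has been resolved do \emph{not} exhaust the interval in measure.

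Your Dini reduction is fine as far as it goes, but note that your claim ``the only candidates for a discontinuity lie in $\overline{L}$'' already assumes local boundedness of $\tau$ off $\overline{L}$, which is a form of the strong terminating conjecture, and the paper gives explicit examples (section \ref{section:Pathologies}, and the discussion around conjecture \ref{conj:Hausdorff}) suggesting that the complement of the quasi-regular neighbourhoods may carry positive measure and genuinely different dynamics. In short: you have correctly located the obstruction, but the renormalisation you sketch is not the one the paper provides, and the paper's own tools leave the conjecture open.
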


In \cite{ChamberlandMartelli}, both conjectures were proved to hold within 
an explicitly-determined real neighbourhood of $x=\frac{1}{2}$, where $m$ turns out to be affine. 
Using a computer-assisted proof, this result was then substantially extended in \cite{CellarosiMunday}, where the limit function was constructed in neighbourhoods of 
all rational numbers with denominator at most $18$ lying in the interval $\left[\frac{1}{2},\frac{2}{3}\right]$. The measure of these neighbourhoods adds up to only $11.75\%$ of the measure of the interval.
The authors also identified $17$ rational numbers at which $m$ is non-differentiable.
In \cite{Hoseana}, a long initial segment of the \mmm\ sequence was computed for $x=\frac{\sqrt{5}-1}{2}$,
and, up to stabilisation, for some convergents of the continued fraction expansion of $x$, using exact 
arithmetic in $\mathbb{Q}\left(\sqrt{5}\right)$ and $\mathbb{Q}$, respectively. 
The results suggest that $\tau$ becomes unbounded along the sequence of 
convergents, that is, the transit time at $x$ could be infinite.
Thus, although the strong terminating conjecture seems to hold over $\mathbb{Q}$, it may fail in larger fields.

Motivated by these studies of the system $[0,x,1]$, we introduce the \textit{functional} \mmm\ which acts on the space of finite sets of piecewise-affine continuous functions with rational coefficients; we refer to such sets as \textit{bundles}\footnote{This is unrelated to the standard notion of \textit{fibre bundles} in topology.}. Observing the early evolution of the bundle $[0,x,1]$ (figure \ref{fig:earlydynamics0x1}), we can see that a local minimum of the limit function (figure \ref{fig:m}) first appears as a transversal intersection of bundle elements; we refer to such a point as an \textit{X-point}. The aim of this paper is to present a theory of the dynamics of the \mmm\ in the vicinity of X-points.

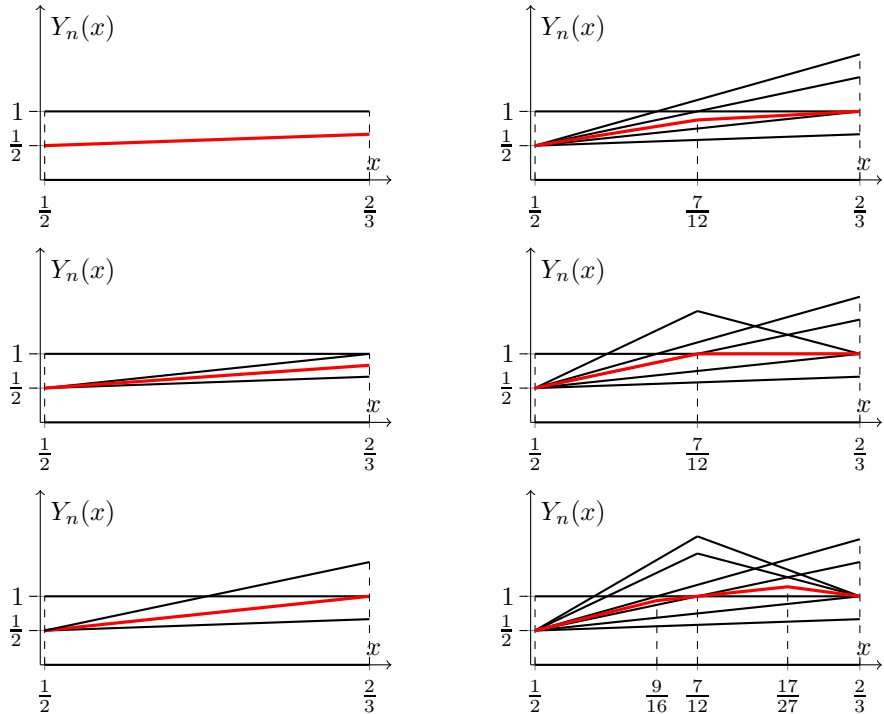
\begin{figure}[t]
\centering
\begin{tabular}{ccc}
\begin{tikzpicture}
\begin{axis}[
	xmin=0.4977477477,
	xmax=0.6779279279,
	ymin=0,
	ymax=2.55,
    ytick={0.5,1},yticklabels={$\frac{1}{2}$,$1$},
    xtick={0.5,0.6666666666666667},xticklabels={$\frac{1}{2}$,$\frac{2}{3}$},
	axis lines=middle,
	samples=100,
	xlabel=$x$,
	ylabel=$Y_n(x)$,
	width=12.5/16*8cm,
	height=12.5/16*5cm,
	clip=false,
	axis lines=middle,
    x axis line style=->,
    y axis line style=->,
]
\addplot [thick,domain=0.5:0.6666666666666667] {0};
\addplot [very thick,color=red,domain=0.5:0.6666666666666667] {x};
\addplot [thick,domain=0.5:0.6666666666666667] {1};
\draw[dashed] (axis cs:0.6666666666666667,0)--(axis cs:0.6666666666666667,1);
\draw[dashed] (axis cs:0.5,0) -- (axis cs:0.5,1);
\draw[dashed] (axis cs:0.492,0.5) -- (axis cs:0.5,0.5);
\draw[dashed] (axis cs:0.492,1) -- (axis cs:0.5,1);
\end{axis}
\end{tikzpicture}&\phantom{aaa}&\begin{tikzpicture}
\begin{axis}[
	xmin=0.4977477477,
	xmax=0.6779279279,
	ymin=0,
	ymax=2.55,
    ytick={0.5,1},yticklabels={$\frac{1}{2}$,$1$},
    xtick={0.5,0.6666666666666667,0.5833333333333333},xticklabels={$\frac{1}{2}$,$\frac{2}{3}$,$\frac{7}{12}$},
	axis lines=middle,
	samples=100,
	xlabel=$x$,
	ylabel=$Y_n(x)$,
	width=12.5/16*8cm,
	height=12.5/16*5cm,
	clip=false,
	axis lines=middle,
    x axis line style=->,
    y axis line style=->,
]
\addplot [thick,domain=0.5:0.6666666666666667] {0};
\addplot [thick,domain=0.5:0.6666666666666667] {x};
\addplot [thick,domain=0.5:0.6666666666666667] {1};
\addplot [thick,domain=0.5:0.6666666666666667] {3*x-1};
\addplot [thick,domain=0.5:0.6666666666666667] {6*x-5/2};
\addplot [thick,domain=0.5:0.6666666666666667] {8*x-7/2};
\draw[dashed] (axis cs:0.6666666666666667,0)--(axis cs:0.6666666666666667,1.8333333333333);
\draw[dashed] (axis cs:0.5,0) -- (axis cs:0.5,1);
\draw[dashed] (axis cs:0.492,0.5) -- (axis cs:0.5,0.5);
\draw[dashed] (axis cs:0.492,1) -- (axis cs:0.5,1);
\draw[dashed] (axis cs:0.5833333333333333,0)--(axis cs:0.5833333333333333,0.875);
\draw[very thick,color=red] (axis cs:0.5,0.5) -- (axis cs:0.5833333333333333,0.875)-- (axis cs:0.6666666666666667,1);
\end{axis}
\end{tikzpicture}\\
\begin{tikzpicture}
\begin{axis}[
	xmin=0.4977477477,
	xmax=0.6779279279,
	ymin=0,
	ymax=2.55,
    ytick={0.5,1},yticklabels={$\frac{1}{2}$,$1$},
    xtick={0.5,0.6666666666666667},xticklabels={$\frac{1}{2}$,$\frac{2}{3}$},
	axis lines=middle,
	samples=100,
	xlabel=$x$,
	ylabel=$Y_n(x)$,
	width=12.5/16*8cm,
	height=12.5/16*5cm,
	clip=false,
	axis lines=middle,
    x axis line style=->,
    y axis line style=->,
]
\addplot [thick,domain=0.5:0.6666666666666667] {0};
\addplot [thick,domain=0.5:0.6666666666666667] {x};
\addplot [thick,domain=0.5:0.6666666666666667] {1};
\addplot [thick,domain=0.5:0.6666666666666667] {3*x-1};
\draw[dashed] (axis cs:0.6666666666666667,0)--(axis cs:0.6666666666666667,1);
\draw[dashed] (axis cs:0.5,0) -- (axis cs:0.5,1);
\draw[dashed] (axis cs:0.492,0.5) -- (axis cs:0.5,0.5);
\draw[dashed] (axis cs:0.492,1) -- (axis cs:0.5,1);
\draw[very thick,color=red] (axis cs:0.5,0.5) -- (axis cs:0.6666666666666667,0.8333333333333333);
\end{axis}
\end{tikzpicture}&&\begin{tikzpicture}
\begin{axis}[
	xmin=0.4977477477,
	xmax=0.6779279279,
	ymin=0,
	ymax=2.55,
    ytick={0.5,1},yticklabels={$\frac{1}{2}$,$1$},
    xtick={0.5,0.6666666666666667,0.5833333333333333},xticklabels={$\frac{1}{2}$,$\frac{2}{3}$,$\frac{7}{12}$},
	axis lines=middle,
	samples=100,
	xlabel=$x$,
	ylabel=$Y_n(x)$,
	width=12.5/16*8cm,
	height=12.5/16*5cm,
	clip=false,
	axis lines=middle,
    x axis line style=->,
    y axis line style=->,
]
\addplot [thick,domain=0.5:0.6666666666666667] {0};
\addplot [thick,domain=0.5:0.6666666666666667] {x};
\addplot [thick,domain=0.5:0.6666666666666667] {1};
\addplot [thick,domain=0.5:0.6666666666666667] {3*x-1};
\addplot [thick,domain=0.5:0.6666666666666667] {6*x-5/2};
\addplot [thick,domain=0.5:0.6666666666666667] {8*x-7/2};
\addplot [thick,domain=0.5:0.5833333333333333] {27/2*x-25/4};
\addplot [thick,domain=0.5833333333333333:0.6666666666666667] {-15/2*x+6};
\draw[dashed] (axis cs:0.5833333333333333,0)--(axis cs:0.5833333333333333,1);
\draw[dashed] (axis cs:0.6666666666666667,0)--(axis cs:0.6666666666666667,1.8333333333333);
\draw[dashed] (axis cs:0.5,0) -- (axis cs:0.5,1);
\draw[dashed] (axis cs:0.492,0.5) -- (axis cs:0.5,0.5);
\draw[dashed] (axis cs:0.492,1) -- (axis cs:0.5,1);

\draw[very thick,color=red] (axis cs:0.5,0.5) -- (axis cs:0.5833333333333333,1) -- (axis cs:0.6666666666666667,1);
\end{axis}
\end{tikzpicture}\\
\begin{tikzpicture}
\begin{axis}[
	xmin=0.4977477477,
	xmax=0.6779279279,
	ymin=0,
	ymax=2.55,
    ytick={0.5,1},yticklabels={$\frac{1}{2}$,$1$},
    xtick={0.5,0.6666666666666667},xticklabels={$\frac{1}{2}$,$\frac{2}{3}$},
	axis lines=middle,
	samples=100,
	xlabel=$x$,
	ylabel=$Y_n(x)$,
	width=12.5/16*8cm,
	height=12.5/16*5cm,
	clip=false,
	axis lines=middle,
    x axis line style=->,
    y axis line style=->,
]
\addplot [thick,domain=0.5:0.6666666666666667] {0};
\addplot [thick,domain=0.5:0.6666666666666667] {x};
\addplot [thick,domain=0.5:0.6666666666666667] {1};
\addplot [very thick,color=red,domain=0.5:0.6666666666666667] {3*x-1};
\addplot [thick,domain=0.5:0.6666666666666667] {6*x-5/2};
\draw[dashed] (axis cs:0.6666666666666667,0)--(axis cs:0.6666666666666667,1.5);
\draw[dashed] (axis cs:0.5,0) -- (axis cs:0.5,1);
\draw[dashed] (axis cs:0.492,0.5) -- (axis cs:0.5,0.5);
\draw[dashed] (axis cs:0.492,1) -- (axis cs:0.5,1);
\end{axis}
\end{tikzpicture}&&\begin{tikzpicture}
\begin{axis}[
	xmin=0.4977477477,
	xmax=0.6779279279,
	ymin=0,
	ymax=2.55,
    ytick={0.5,1},yticklabels={$\frac{1}{2}$,$1$},
    xtick={0.5,0.6666666666666667,0.5833333333333333,0.5625,0.6296296296296296},xticklabels={$\frac{1}{2}$,$\frac{2}{3}$,$\frac{7}{12}$,$\frac{9}{16}$,$\frac{17}{27}$},
	axis lines=middle,
	samples=100,
	xlabel=$x$,
	ylabel=$Y_n(x)$,
	width=12.5/16*8cm,
	height=12.5/16*5cm,
	clip=false,
	axis lines=middle,
    x axis line style=->,
    y axis line style=->,
]
\addplot [thick,domain=0.5:0.6666666666666667] {0};
\addplot [thick,domain=0.5:0.6666666666666667] {x};
\addplot [thick,domain=0.5:0.6666666666666667] {1};
\addplot [thick,domain=0.5:0.6666666666666667] {3*x-1};
\addplot [thick,domain=0.5:0.6666666666666667] {6*x-5/2};
\addplot [thick,domain=0.5:0.6666666666666667] {8*x-7/2};
\addplot [thick,domain=0.5:0.5833333333333333] {27/2*x-25/4};
\addplot [thick,domain=0.5833333333333333:0.6666666666666667] {-15/2*x+6};
\addplot [thick,domain=0.5:0.5833333333333333] {33/2*x-31/4};
\addplot [thick,domain=0.5833333333333333:0.6666666666666667] {8-21/2*x};
\draw[dashed] (axis cs:0.5833333333333333,0)--(axis cs:0.5833333333333333,1);
\draw[dashed] (axis cs:0.6666666666666667,0)--(axis cs:0.6666666666666667,1.8333333333333);
\draw[dashed] (axis cs:0.5,0) -- (axis cs:0.5,1);
\draw[dashed] (axis cs:0.492,0.5) -- (axis cs:0.5,0.5);
\draw[dashed] (axis cs:0.492,1) -- (axis cs:0.5,1);
\draw[dashed] (axis cs:0.5625,0) -- (axis cs:0.5625,0.9375);
\draw[dashed] (axis cs:0.6296296296296296,0) -- (axis cs:0.6296296296296296,1.138888888888889);

\draw[very thick,color=red] (axis cs:0.5,0.5) -- (axis cs:0.5625,0.9375) -- (axis cs:0.5833333333333333,1) -- (axis cs:0.6296296296296296,1.138888888888889) -- (axis cs:0.6666666666666667,1);
\end{axis}
\end{tikzpicture}
\end{tabular}
\caption{\label{fig:earlydynamics0x1}\rm\small
Early evolution of the bundle $[0,x,1]$. In each picture, the red function is the current median.
}
\end{figure}

We now describe the organisation and main results of this paper. In section \ref{section:Preliminaries} we introduce the preliminary concepts: cores, bundles, and X-points, including \textit{bundle equivalence} which generalises affine-equivalence of two real sets.
In section \ref{section:Symmetries} we show that for any two functions forming an X-point and an auxiliary function ---any function not through the X-point--- there is a local symmetry which induces a self-equivalence of the subbundle containing these three functions.
The symmetry is a homology (theorem \ref{thm:GeneralSymmetry}) which is harmonic in a significant special case (corollary 
\ref{cor:RegularX-point}), except at X-points where the median sequence reverses its monotonicity (theorem \ref{thm:monotonicityreversing}).

In section \ref{section:0x11} we prove that the strong terminating conjecture holds globally 
for the bundle $[0,x,1,1]$ (theorem \ref{thm:0x11}) using a method which minimises computer assistance. 

In section \ref{section:X-points} we develop conditions for which the symmetry of an X-point becomes a symmetry of the limit function (lemmas \ref{lemma:affinecombinations} and \ref{lemma:independencefromhistory}, theorem \ref{theorem:inheritance}). Then we use our knowledge of the dynamics of $[0,x,1,1]$ to establish the local structure of the limit function near general X-points 
(theorems \ref{thm:Generalrk2} and \ref{thm:Generalrk1}). Of note is the existence of a hierarchy of X-points, whereby an X-point typically generates an \textit{auxiliary sequence} (theorem \ref{thm:AuxSeq}) of like points. Auxiliary sequences form the scaffolding of the intricate structure of local minima of the limit function seen in figure \ref{fig:m}. Subsequently, we justify the assumptions of our main theorems by illustrating various pathologies.

In section \ref{section:ReducedSystem} we introduce the \textit{normal form} of the \mmm, a one-parameter 
family of dynamical systems over $\mathbb{Q}$ which, after a suitable rescaling, represent the dynamics near any X-point
with a given transit time (proposition \ref{prop:equivalenceNF}). This simplification results from the fact that the local dynamics is largely unaffected by its earlier history. We show that a normal form orbit has a regular initial phase (lemma \ref{lemma:haltingproblem}), which we then exploit to derive lower bounds for the limit and the transit time (theorem \ref{thm:boundformandtau}). We also show that during this phase, the
iterates have low arithmetic complexity\footnote{In our understanding, 
the ubiquity of stabilising sequences over $\mathbb{Q}$ is closely related 
to the slow growth of the arithmetic complexity of their terms.} (corollary \ref{cor:effexp}).

In the last section, we describe the results of exact computations on the system 
$[0,x,1]$, made possible by the theory just developed. 
We have established the strong terminating conjecture in specified 
neighbourhoods of $2791$ rational numbers in the interval 
$\left[\frac{1}{2},\frac{2}{3}\right]$, thereby extending the results of
\cite{CellarosiMunday} by two orders of magnitude. This large data collection
makes it clear that the domains over which the limit function is regular do not account for
the whole Lebesgue measure, suggesting the existence of a drastically different, yet unknown,
dynamical behaviour. For a quantitative assessment of this phenomenon, we have computed the variation of the limit function with respect to Farey fractions, and with respect to dyadic fractions. In both cases, the variation is observed to grow algebraically with the size of the partition, with comparable exponents, suggesting the following conjecture.

\begin{conjecture} \label{conj:Hausdorff}
The Hausdorff dimension of the graph of the limit function of the system $[0,x,1]$
is greater than $1$.
\end{conjecture}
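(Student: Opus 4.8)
The statement is a conjecture, so what follows is a program rather than a proof. The starting point is that a continuous function of bounded variation has a rectifiable graph, hence Hausdorff dimension exactly $1$; thus the conjecture asserts that the limit function $m$ is \emph{quantitatively} non-rectifiable on $\left[\frac12,\frac23\right]$. The plan is to extract this roughness from the hierarchy of X-points and their auxiliary sequences, and then to promote it from a box-counting estimate to a Hausdorff lower bound by a mass-distribution argument. Concretely, starting from the completely understood dynamics of the bundle $[0,x,1,1]$ (theorem \ref{thm:0x11}) and transporting it by bundle equivalence, I would use theorem \ref{thm:AuxSeq} together with the inheritance theorem \ref{theorem:inheritance} to build a nested family of intervals $\{I_w\}$ indexed by finite words $w$: each $I_w$ carries $N_{|w|}\geqslant 2$ children $I_{wi}$, produced by X-points whose symmetry (theorem \ref{thm:GeneralSymmetry}, corollary \ref{cor:RegularX-point}, and its failure recorded in theorem \ref{thm:monotonicityreversing}) is inherited by $m$, and on each $I_w$ the function $m$ reproduces, after an affine change of coordinates with horizontal scale $\ell_{|w|}$ and vertical scale $h_{|w|}$, one of a finite library of local profiles given by theorems \ref{thm:Generalrk2} and \ref{thm:Generalrk1}. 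The monotonicity-reversing X-points ensure that these profiles genuinely oscillate, so $\operatorname{osc}(m,I_w)\asymp h_{|w|}$.

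The scales $\ell_n,h_n$ are governed by the normal form: by proposition \ref{prop:equivalenceNF} the local dynamics at an X-point of transit time $\tau$ is, after rescaling, a single normal-form orbit, and theorem \ref{thm:boundformandtau} bounds its limit and $\tau$ from below. Propagating these bounds through the rescalings should show that along the scaffold $h_n$ decays strictly more slowly than $\ell_n$, i.e.
\begin{equation*}
\alpha:=\limsup_{n\to\infty}\frac{\log h_n}{\log\ell_n}<1,
\end{equation*}
so that $m$ restricted to the scaffold is anti-Hölder with exponent $\alpha$; together with the branching numbers $N_n$ this already yields a box-dimension bound $\overline{\dim}_B(\operatorname{graph}m)\geqslant 1+\gamma$ with $\gamma>0$, consistent with the algebraic growth of the Farey and dyadic variations reported in the final section. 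To reach Hausdorff dimension I would then distribute a Bernoulli mass $\mu$ on the Cantor set $C=\bigcap_n\bigcup_w I_w$, each child receiving a share of its parent's mass roughly proportional to $h_{|w|+1}/h_{|w|}$, and push $\mu$ forward to a measure $\widetilde\mu$ on the graph of $m|_C$ along $x\mapsto(x,m(x))$. Since on $I_w$ the graph has horizontal extent $\ell_{|w|}$ but vertical extent $\asymp h_{|w|}\gg\ell_{|w|}$, a ball $B(z,r)$ on the graph with $h_{n+1}\lesssim r\lesssim h_n$ meets only boundedly many generation-$n$ pieces and a controlled number of finer ones; estimating $\widetilde\mu(B(z,r))\lesssim r^{s}$ and invoking the mass-distribution principle (equivalently, Frostman's lemma) gives $\dim_H(\operatorname{graph}m)\geqslant s$, and one chooses the scaffold so that $s>1$.

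Two issues are genuinely hard. First, \emph{non-cancellation}: by \eqref{eq:msum} the limit function is an infinite sum of tent-like median increments, and one must check that the steep increments contributed by the scaffold X-points are not cancelled by the remaining terms; the history-independence results (lemmas \ref{lemma:affinecombinations}, \ref{lemma:independencefromhistory}) and the slow growth of arithmetic complexity during the regular phase (corollary \ref{cor:effexp}) should localise the relevant increments on each $I_w$, but converting this into a clean lower bound for $\operatorname{osc}(m,I_w)$ will take care. Second, and more serious, \emph{uniform inheritance}: it is not enough that individual X-points be steep — the steepness and the branching must persist down a full positive-branching subtree, which means verifying the hypotheses of theorems \ref{theorem:inheritance}, \ref{thm:Generalrk2} and \ref{thm:Generalrk1} simultaneously along a Cantor set of nested auxiliary sequences, with $\alpha$ bounded below $1$ and the branching bounded above $1$ uniformly in $w$. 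Establishing this self-improving regularity of the X-point hierarchy is precisely the step that the numerical evidence of the last section makes plausible but does not settle, and is where the main work lies.
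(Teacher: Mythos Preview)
You correctly identify this as an open conjecture: the paper offers no proof, only numerical evidence (algebraic growth of the variation with respect to Farey and dyadic partitions, suggesting box dimension roughly $1.86$) together with an informal argument about where the roughness does \emph{not} come from.

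That last point is precisely where your program runs aground. Your plan builds the scaffold $C=\bigcap_n\bigcup_w I_w$ out of the X-point/auxiliary-sequence hierarchy and controls $\operatorname{osc}(m,I_w)$ via theorems \ref{thm:Generalrk2} and \ref{thm:Generalrk1}. But those theorems describe $m$ only on the neighbourhoods of quasi-regularity $U_p$, where $m$ is piecewise affine with finitely many pieces---the V-shapes of the final section. The paper explicitly argues that this portion of the graph has box dimension $1$ ``regardless of the growth of the height of these V-shapes'', and that the fractional dimension must originate from the \emph{complement} of $\bigcup_p U_p$, a region its theorems do not touch. Your ``finite library of local profiles'' therefore consists of piecewise-affine functions; the oscillation you can certify is that of tame pieces, while the roughness you need lives in the gaps where none of the cited results say anything about $m$. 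In short, the machinery you invoke controls exactly the part of the graph the paper believes is one-dimensional.

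Two smaller misfires. Your appeal to monotonicity-reversing X-points (theorem \ref{thm:monotonicityreversing}) to force oscillation is off: those are exceptional points like $x=\tfrac12$ where the median sequence changes direction, not the generic secondary X-points populating auxiliary sequences in $\left[\tfrac12,\tfrac23\right]$, all of which are monotonic. And theorem \ref{thm:boundformandtau} bounds $m_t$ and $\tau_t$ from below for a single normal-form orbit; it does not yield the scale comparison $\alpha=\limsup\log h_n/\log\ell_n<1$ along the hierarchy, because the normal form tracks the vertical structure at one X-point, not the horizontal-to-vertical ratio across nested generations. A genuine attack would need new information about $m$ outside the quasi-regularity neighbourhoods---the ``drastically different, yet unknown, dynamical behaviour'' the introduction flags.
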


The representation \eqref{eq:msum} of the limit function (a sum of 
piecewise-affine saw-tooth functions of irregularly decreasing amplitude)
makes unbounded variation plausible 
(as with Takagi-type functions \cite{AllaartKawamura,Lagarias}).
However, the nature of the conjectured domains of unbounded variation is 
far more elusive, and is clearly relevant to the continuity conjecture.
This problem deserves further investigation.

\section{Preliminaries}\label{section:Preliminaries}

We first define the \textit{core} of a real set and use it to write an alternative formula for \mmm\ recursion which will be useful later. Then we define our two main objects of study: \textit{bundles} and \textit{X-points}.

\subsection{Cores}

The \textit{core} of a real set $\xi=\left[x_1',\ldots,x_n'\right]$, $n\geqslant 2$, whose elements are written in non-decreasing order, i.e., $x_{1}'\leqslant\ldots\leqslant x_{n}'$, is the subset containing its two (three, respectively) central elements if $n$ is even (odd, respectively), i.e.,
\begin{equation}\label{eq:defcore}
\lambda:=\begin{cases}
\left[x_{\frac{n}{2}}',x_{\frac{n}{2}+1}'\right]&\text{if }n\text{ is even}\\
\left[x_{\frac{n-1}{2}}',x_{\frac{n+1}{2}}',x_{\frac{n+3}{2}}'\right]&\text{otherwise.}
\end{cases}
\end{equation}
The core is said to be \textit{odd} if $n$ is odd, and \textit{even} if $n$ is even. In the context of $\xi$ being an iterate, i.e., when it is written as $\xi_n$, where $n=|\xi|$, we may denote its core by $\lambda_n$.

Let $\xi$ be an initial set for which the median sequence is non-decreasing. We prove two lemmas which clarify the importance of cores. 

\begin{lemma}\label{lemma:translation}
Let $n\geqslant |\xi|+2$ be odd, and let $i\in\{1,\ldots,n-2\}$, $j\in\{1,\ldots,n-1\}$ such that $\mathcal{M}_{n-1}=\left\langle x_i,x_j\right\rangle$ and $\mathcal{M}_{n-2}=x_i$. Then $x_n\geqslant x_j$ with equality if and only if $x_i=x_j$.
\end{lemma}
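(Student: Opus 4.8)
The plan is to evaluate $x_n$ explicitly from the \mmm\ recursion \eqref{eq:mmm} and thereby reduce the statement to an elementary sign computation.

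First I would set $S_m:=x_1+\cdots+x_m$ and use \eqref{eq:mmm} in the form $x_m=m\,\mathcal{M}_{m-1}-S_{m-1}$. Since $n\geqslant|\xi|+2$, the indices $n-2,\,n-1,\,n$ all exceed $|\xi|$, so $x_{n-1}$ and $x_n$ are genuine \mmm\ iterates and the assumed monotonicity of the median sequence is available at the relevant indices. Applying the recursion with $m=n-1$ and the hypothesis $\mathcal{M}_{n-2}=x_i$ gives $x_{n-1}=(n-1)x_i-S_{n-2}$, whence the partial sum telescopes:
\[
S_{n-1}=S_{n-2}+x_{n-1}=(n-1)x_i .
\]
This is the step that removes all dependence on the unknown sum $S_{n-2}$. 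Feeding it into the recursion with $m=n$, together with $\mathcal{M}_{n-1}=\langle x_i,x_j\rangle=\tfrac12(x_i+x_j)$, I would obtain
\[
x_n=n\,\mathcal{M}_{n-1}-S_{n-1}=\tfrac n2(x_i+x_j)-(n-1)x_i=x_i+\tfrac n2(x_j-x_i),
\]
and therefore $x_n-x_j=\bigl(\tfrac n2-1\bigr)(x_j-x_i)=\tfrac{n-2}{2}(x_j-x_i)$.

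It then remains only to note that $x_j\geqslant x_i$, which is precisely the non-decreasing property of the median sequence at the passage from $\xi_{n-2}$ to $\xi_{n-1}$: $\tfrac12(x_i+x_j)=\mathcal{M}_{n-1}\geqslant\mathcal{M}_{n-2}=x_i$. Since $n\geqslant|\xi|+2>2$, the factor $\tfrac{n-2}{2}$ is strictly positive, so $x_n-x_j\geqslant0$, with equality exactly when $x_i=x_j$. This is the assertion of the lemma.

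I do not anticipate any genuine obstacle: the argument is a short direct computation, and its computational heart is the telescoping identity $S_{n-1}=(n-1)x_i$. The only point needing care is the bookkeeping — checking that the indices $n-2,\,n-1,\,n$ lie in the range where both \eqref{eq:mmm} and the hypothesised monotonicity of $(\mathcal{M}_m)$ apply — and this is exactly what the hypothesis $n\geqslant|\xi|+2$ secures.
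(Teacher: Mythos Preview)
Your proof is correct and follows essentially the same approach as the paper's. The paper writes the computation as the single line $x_n=n\mathcal{M}_{n-1}-(n-1)\mathcal{M}_{n-2}=n\langle x_i,x_j\rangle-(n-1)x_i=\bigl(\tfrac{n}{2}-1\bigr)(x_j-x_i)+x_j\geqslant x_j$, using directly that the \mmm\ construction forces $\langle x_1,\ldots,x_{n-1}\rangle=\mathcal{M}_{n-2}$; your telescoping identity $S_{n-1}=(n-1)x_i$ is just a rederivation of that built-in fact, so the algebraic content is identical.
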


\begin{proof}
Let $n\geqslant |\xi|+2$ be odd. Notice that
$$x_n=n\mathcal{M}_{n-1}-(n-1)\mathcal{M}_{n-2}
   =n\left\langle x_i,x_j\right\rangle -(n-1)x_i
   =\left(\frac{n}{2}-1\right)\left(x_j-x_i\right)+x_j
   \geqslant x_j,$$
proving the lemma.
\end{proof}

\begin{lemma}\label{lemma:translation2}
Let $n\geqslant |\xi|+3$ be an integer.
\begin{enumerate}
\item[i)] If $n$ is even, then there exist $i\in\{1,\ldots,n-3\}$ and $j\in\{1,\ldots,n-2\}$ such that $\mathcal{M}_{n-3}=x_i$, $\mathcal{M}_{n-2}=\left\langle x_i,x_j\right\rangle$, $\mathcal{M}_{n-1}=x_j$. Moreover,
\begin{equation}\label{eq:lemmatranslation2eq1}
x_n-x_{n-1}=\mathcal{M}_{n-1}-\mathcal{M}_{n-3},
\end{equation}
and $x_n\geqslant x_{n-1}\geqslant x_j\geqslant x_i$ with equality if and only if $x_i=x_j$.
\item[ii)] If $n$ is odd, then there exist $i,j\in\{1,\ldots,n-3\}$ and $k\in\{1,\ldots,n-1\}$ such that $\mathcal{M}_{n-3}=\left\langle x_i,x_j\right\rangle$, $\mathcal{M}_{n-2}=x_j$, $\mathcal{M}_{n-1}=\left\langle x_j,x_k\right\rangle$. Moreover,
\begin{equation}\label{eq:lemmatranslation2eq2}
x_n-x_{n-1}=\frac{n}{2}\left(x_k-x_j\right)-\frac{n-2}{2}\left(x_j-x_i\right),
\end{equation}
and $x_n<x_{n-1}$ if and only if
\begin{equation}\label{eq:cond}
\frac{x_k-x_j}{x_j-x_i}<1-\frac{2}{n}.
\end{equation}
\end{enumerate}
\end{lemma}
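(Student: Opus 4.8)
The plan is to analyse the median sequence near its head in both parity cases, using the monotonicity of $(\mathcal{M}_n)$ together with the structure of the core $\lambda_{n-1}$ of the penultimate iterate. First I would observe that, since the median sequence is non-decreasing and $n-1\geqslant |\xi|+2$, the median $\mathcal{M}_{n-1}$ is the average of the two central elements of $\xi_{n-1}$ when $n-1$ is even (so $n$ is odd), and is the (single) central element when $n-1$ is odd (so $n$ is even). Tracking which data points realise the medians $\mathcal{M}_{n-3}$, $\mathcal{M}_{n-2}$, $\mathcal{M}_{n-1}$ as one moves up the sorted list, and using that each new iterate $x_m$ lies on or above the current median (which follows from Lemma \ref{lemma:translation} applied at the appropriate odd index), gives the existence of the indices $i,j$ (and $k$ in the odd case) with the stated order relations and the stated formulas for the medians. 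This is the bookkeeping part.

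For part (i) ($n$ even), I would combine $x_n=n\mathcal{M}_{n-1}-(n-1)\mathcal{M}_{n-2}$ and $x_{n-1}=(n-1)\mathcal{M}_{n-2}-(n-2)\mathcal{M}_{n-3}$ and subtract; with $\mathcal{M}_{n-2}=\langle x_i,x_j\rangle=\tfrac12(x_i+x_j)$, $\mathcal{M}_{n-1}=x_j$, $\mathcal{M}_{n-3}=x_i$, the two ``$\mathcal{M}_{n-2}$'' contributions telescope and one is left with $x_n-x_{n-1}=\mathcal{M}_{n-1}-\mathcal{M}_{n-3}=x_j-x_i\geqslant 0$, which is \eqref{eq:lemmatranslation2eq1}; the chain $x_n\geqslant x_{n-1}\geqslant x_j\geqslant x_i$ then follows, the inequality $x_{n-1}\geqslant x_j$ being exactly Lemma \ref{lemma:translation} at the odd index $n-1$ (whose hypotheses $\mathcal{M}_{n-2}=\langle x_i,x_j\rangle$, $\mathcal{M}_{n-3}=x_i$ we have just verified), and equality propagating iff $x_i=x_j$.

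For part (ii) ($n$ odd), the same subtraction $x_n-x_{n-1}=\bigl(n\mathcal{M}_{n-1}-(n-1)\mathcal{M}_{n-2}\bigr)-\bigl((n-1)\mathcal{M}_{n-2}-(n-2)\mathcal{M}_{n-3}\bigr)$, now with $\mathcal{M}_{n-3}=\tfrac12(x_i+x_j)$, $\mathcal{M}_{n-2}=x_j$, $\mathcal{M}_{n-1}=\tfrac12(x_j+x_k)$, expands to $\tfrac{n}{2}(x_j+x_k)-2(n-1)x_j+\tfrac{n-2}{2}(x_i+x_j)$, which simplifies (the $x_j$ coefficients cancelling down) to $\tfrac{n}{2}(x_k-x_j)-\tfrac{n-2}{2}(x_j-x_i)$, giving \eqref{eq:lemmatranslation2eq2}. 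Since by the order relations $x_j-x_i>0$ whenever the core is non-degenerate, dividing the inequality $x_n-x_{n-1}<0$ by $\tfrac{n-2}{2}(x_j-x_i)$ yields $\tfrac{x_k-x_j}{x_j-x_i}<\tfrac{n}{n-2}-1=\tfrac{2}{n-2}$... here one should be careful: the clean form $1-\tfrac2n$ in \eqref{eq:cond} comes instead from dividing by $\tfrac{n}{2}(x_j-x_i)$, i.e.\ rewriting $x_n-x_{n-1}<0$ as $x_k-x_j<\tfrac{n-2}{n}(x_j-x_i)=(1-\tfrac2n)(x_j-x_i)$. I expect the main obstacle to be not the algebra but the indexing argument in the odd case: one must confirm that the element realising $\mathcal{M}_{n-2}$ (the single central element of $\xi_{n-2}$) coincides with the \emph{larger} endpoint of the even core $\lambda_{n-3}$ and the \emph{smaller} endpoint of the even core $\lambda_{n-1}$, which requires knowing that the newly adjoined $x_{n-2}$ and $x_{n-1}$ both land at or above that central value — again a consequence of Lemma \ref{lemma:translation}, but applied twice and with attention to which side of the core the insertions fall.
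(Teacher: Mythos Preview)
Your proposal is correct and follows essentially the same route as the paper: both use the pair of recursions $x_n=n\mathcal{M}_{n-1}-(n-1)\mathcal{M}_{n-2}$ and $x_{n-1}=(n-1)\mathcal{M}_{n-2}-(n-2)\mathcal{M}_{n-3}$, subtract, and substitute the median expressions, with Lemma~\ref{lemma:translation} supplying the ordering information needed to identify the indices. The only cosmetic difference is that for $x_{n-1}\geqslant x_j$ the paper computes directly $x_{n-1}-x_j=\tfrac{n-3}{2}(x_j-x_i)$ from the second recursion rather than re-invoking Lemma~\ref{lemma:translation}; and for part~(ii) the paper stops at ``the right-hand side of \eqref{eq:lemmatranslation2eq2} is negative'' without writing out the division step you (after the self-correction) carry through to reach \eqref{eq:cond}.
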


\begin{proof}
Let $n\geqslant |\xi|+3$. First suppose $n$ is even. Then there are indices $i\in\{1,\ldots,n-3\}$ and $j\in\{1,\ldots,n-2\}$ for which $\mathcal{M}_{n-3}=x_i$, $\mathcal{M}_{n-2}=\left\langle x_i,x_j\right\rangle$, and $\mathcal{M}_{n-1}=x_j$. Notice that lemma \ref{lemma:translation} guarantees that the median $\mathcal{M}_{n-1}$ must be the larger element of which the previous median $\mathcal{M}_{n-2}$ is the average; we can not have $\mathcal{M}_{n-1}=x_{n-1}<x_j$. Moreover, we have
\begin{equation}\label{eq:prooflemmaordering}
x_n=n\mathcal{M}_{n-1}-(n-1)\mathcal{M}_{n-2}\quad\text{and}\quad x_{n-1}=(n-1)\mathcal{M}_{n-2}-(n-2)\mathcal{M}_{n-3}.
\end{equation}
After substituting the above median expressions, subtracting these two equations 
gives \eqref{eq:lemmatranslation2eq1}. Moreover, the second equation in \eqref{eq:prooflemmaordering} is equivalent to
$$x_{n-1}-x_j=\frac{n-3}{2}\left(x_j-x_i\right).$$
Since $x_j\geqslant x_i$, then the last equation implies $x_{n-1}\geqslant x_j$, whereas the second-to-last implies $x_n\geqslant x_{n-1}$, and hence $x_n\geqslant x_{n-1}\geqslant x_j\geqslant x_i$. If $x_i=x_j$, then the second-to-last equation gives $x_{n}=x_{n-1}$, whereas the last one implies $x_n=x_j$, meaning that these numbers are all equal.

Now suppose $n$ is odd. Then there are indices $i,j\in\{1,\ldots,n-3\}$ and $k\in\{1,\ldots,n-1\}$ for which $\mathcal{M}_{n-3}=\left\langle x_i,x_j\right\rangle$, $\mathcal{M}_{n-2}=x_j$, and $\mathcal{M}_{n-1}=\left\langle x_j,x_k\right\rangle$, where, once again, lemma \ref{lemma:translation} forbids the case $\mathcal{M}_{n-2}= x_{n-2}<x_j$. Substituting these median expressions to \eqref{eq:prooflemmaordering} gives \eqref{eq:lemmatranslation2eq2}. Notice that $x_n<x_{n-1}$ if and only if its right-hand side is negative.
\end{proof}

Thus, for every $n\geqslant |\xi|+2$, the \mmm\ recursion \eqref{eq:mmm} can be written in terms of the elements of the core $\lambda_{n-2}$:
\begin{equation}\label{eq:pardeprecursion}
x_n=\begin{cases}
x_{n-1}+\left(x_{j}-x_{i}\right)&n\text{ even, }\lambda_{n-2}=\left[x_i,x_j\right]\\
x_{n-1}+\frac{n}{2}\left(x_{k}-x_{j}\right)-\frac{n-2}{2}\left(x_{j}-x_{i}\right)&n\text{ odd, }\lambda_{n-2}=\left[x_i,x_j,x_k\right],
\end{cases}
\end{equation}
where the core elements are written in non-decreasing order.

\subsection{Bundles and X-points}
A \textit{bundle} is a finite (multi)set of 
univariate piecewise-affine continuous real functions having 
rational coefficients and finitely many pieces on any interval. A bundle is \textit{regular} if all its elements are affine. A subset of a bundle is called a \textit{subbundle}.

The arithmetic mean $\langle \Xi\rangle$ and median $\mathcal{M}(\Xi)$ of a bundle $\Xi=\left[Y_1,\ldots,Y_n\right]$ are 
defined pointwise, i.e.,
$$
\langle \Xi\rangle(x):=\langle\Xi(x)\rangle\qquad\text{and}\qquad
\mathcal{M}(\Xi)(x):=\mathcal{M}(\Xi(x)),
$$
where $\Xi(x):=\left[Y_1(x),\ldots,Y_n(x)\right]$, for every $x\in\mathbb{R}$. 
One verifies that these are also piecewise-affine continuous real functions with 
rational coefficients, and hence so is [cf.~\eqref{eq:mmm}]
\begin{equation}\label{eq:F}
\rM(\Xi):=|\Xi|(\mathcal{M}(\Xi)-\langle\Xi\rangle)+\mathcal{M}(\Xi).
\end{equation}
Consequently, we can define the \textit{functional} \mmm\ $\bM$ as a self-map on the 
space of all bundles via
\begin{equation}\label{eq:MMM}
\bM(\Xi):=\Xi\uplus [\rM(\Xi)],
\end{equation}
where the set union operator increases the multiplicity of the function $\rM(\Xi)$ in $\Xi$ by $1$ to obtain $\bM(\Xi)$.

Furthermore, any subfield $\mathbb{L}$ of $\mathbb{R}$ is invariant under 
all bundle functions as well as their \mmm\ images \eqref{eq:F}, so it is possible
to consider the restricted setting $Y_i:\mathbb{L}\to\mathbb{L}$.
It then makes sense to begin the study of bundle dynamics from the case 
$\mathbb{L}=\mathbb{Q}$, that is, to consider the restriction of
bundles to rational points.

Given an initial bundle $\Xi_{n_0}=\left[Y_1,\ldots,Y_{n_0}\right]$, iterating the 
\mmm\ produces a recursive sequence of bundles $\left(\Xi_n\right)_{n= n_0}^\infty$ and a functional orbit $\left(Y_n\right)_{n=1}^\infty$ given by
$$
\Xi_{n+1}=\Xi_n\uplus\left[Y_{n+1}\right],\quad\text{where}\quad 
   Y_{n+1}=\rM\left(\Xi_n\right),\quad\text{for every }n\geqslant n_0,
$$
together with the associated median sequence $\left(\mathcal{M}_n\right)_{n=n_0}^\infty$, 
where $\mathcal{M}_n:=\mathcal{M}\left(\Xi_n\right)$, bearing in mind that this is no longer a sequence of numbers, but a sequence
of functions which is piecewise monotonic by
\begin{equation}\label{eq:mmm2}
Y_{n+1}=n\left(\mathcal{M}_n-\mathcal{M}_{n-1}\right)+\mathcal{M}_n,\qquad\text{for every }n\geqslant n_0+1.
\end{equation}
The transit time $\tau$ and the limit $m$ as functions of the initial bundle, if they exist, are defined pointwise. To emphasise the dependence of the \mmm\ dynamics on the initial bundle, we also call an initial bundle a \textit{system}.

From \eqref{eq:mmm2} it also follows that the functional orbit stabilises if and only if 
the median sequence stabilises \cite[theorem 2.3]{ChamberlandMartelli}. At points where this holds, the sum \eqref{eq:msum} is finite.
While \textit{local} stabilisation ---which holds in an open interval---
has been established near some rational points in the works mentioned earlier
\cite{SchultzShiflett,CellarosiMunday}, global stabilisation is a much stronger property.

Let $\Xi$ and $\Xi'$ be two bundles of the same size. If there is a M\"obius transformation $\mu$ with rational coefficients and an affine transformation $f$ with coefficients in $\mathbb{Q}(x)$ such that for all $x\in\mathbb{R}$ we have
\begin{equation}\label{eq:Equivalence}
f\left(\Xi'(x)\right)=\Xi(\mu(x)),
\end{equation} 
where $f$ acts on a set elementwise without cancellations, then we say that $\Xi$ and $\Xi'$ are \textit{equivalent}, written $\Xi\sim\Xi'$, \textit{via the pair} $(\mu,f)$. Bundle equivalence is a generalisation of affine-equivalence of real sets.

The \textit{upper} and \textit{lower concatenations} of 
two bundle functions $Y$ and $Y'$ are
$$
Y\lor Y':=\max\left\{Y,Y'\right\}
\qquad\text{and}\qquad
Y\land Y':=\min\left\{Y,Y'\right\},
$$
respectively, where the maximum and minimum are defined pointwise. 
Notice that $\left[Y,Y'\right]\sim\left[Y\land Y',Y\lor Y'\right]$,
showing that $\Xi\sim\Xi'$ with $\mu$ and $f$ being the identity function does not imply $\Xi=\Xi'$.

By \eqref{eq:MMM} and the commutation relations
$$
\mathcal{M}(f(\Xi(x)))=f(\mathcal{M}(\Xi)(x))
\qquad\text{and}\qquad
\langle f(\Xi(x))\rangle=f(\langle\Xi(x)\rangle)
$$
valid for any bundle $\Xi$ and any $x\in\mathbb{R}$, we have
$$
f\left(\bM(\Xi)(x)\right)=\bM(f(\Xi(x))).
$$
Thus, if $\Xi\sim\Xi'$ then
$$f\left(\bM\left(\Xi'\right)(x)\right)=\bM(\Xi)(\mu(x)),$$
i.e., $\bM(\Xi)\sim \bM\left(\Xi'\right)$, from which it follows inductively that $\bM^n(\Xi)\sim \bM^n\left(\Xi'\right)$ for every $n\geqslant \mathbb{N}_0$. In other words, the functional \mmm\ \textit{preserves bundle equivalences}. We shall also say that a bundle equivalence is \textit{inherited} through the \mmm\ dynamics.

\sloppy Most bundle equivalences discussed in this paper are non-trivial local
\textit{self-equivalences} $\Xi\sim\Xi$. 
Since these are inherited by the orbit of $\Xi$, they result in a local 
functional equation for the limit function $m$:
\begin{equation}\label{eq:FunctionalEquation}
f(m(x))=m(\mu(x)).
\end{equation}
In section \ref{section:X-points}, we shall 
see that it is possible to achieve \eqref{eq:FunctionalEquation} only by establishing a 
self-equivalence $\Omega\sim\Omega$ of an appropriately chosen subbundle $\Omega\subseteq\Xi$, 
rather than that of the whole bundle.

The phase space of the \mmm\ is very large, and not all initial bundles deserve 
attention. To minimise redundancies, initial bundles $\Xi$ are to be chosen to satisfy the following properties:
\begin{enumerate}
\item [i)] $\Xi$ is not the image under $\bM$ of another bundle, except at isolated points;
\item [ii)] the regions where $\mathcal{M}(\Xi)=\langle\Xi\rangle$ are isolated points; 
\item [iii)] not all lines in $\Xi$ are concurrent (including points at infinity).
\end{enumerate}
Condition ii) prevents immediate stabilisation on a positive-measure subdomain, while iii) excludes bundles which are
equivalent to a rational set.

For example, the initial bundles $[0,x,1]$ and $[0,x,1,1]$ satisfy these conditions. The median sequence of the latter bundle is globally non-decreasing. By contrast, for the former bundle, the real line may be subdivided into regions where the median sequence is non-increasing and non-decreasing, separated by isolated points [from ii)] where the mean of the bundle is equal to its median. Since these two regions are connected by a self-equivalence of the bundle \cite[theorem 3.1]{ChamberlandMartelli}, there is no need to study these two regions separately. In the rest of this paper we shall therefore assume that the median sequence is locally non-decreasing. We shall deal separately with the points at which the median sequence reverses its monotonicity (theorem \ref{thm:monotonicityreversing} in section \ref{section:Symmetries}).

If two functions $Y_i$ and $Y_j$ intersect \textit{transversally} at $p\in\mathbb{Q}$, meaning that there exists $\epsilon>0$ such that $$(p-\epsilon,p+\epsilon)\cap \left\{x\in\mathbb{Q}:Y_i(x)=Y_j(x)\right\}=\{p\},$$ then we write $p=Y_i\bowtie Y_j$ and refer to $p$ as an \textit{X-point}\footnote{This definition is local, so $Y_i\bowtie Y_j$ denotes a point rather than a set of points.} (figure \ref{fig:X-points}). Notice that this definition also includes a transversal intersection of more than two functions, say $Y_i$, $Y_j$, and $Y_k$; such a point is also an X-point which can be written using any pair of subscripts. In a geometrical context, the term X-point and the same notation shall also be used to mean the actual point on the plane rather than its abscissa. An X-point $p=Y_i\bowtie Y_j$ is \textit{regular} if both $Y_i$ and $Y_j$ are regular at $p$, and is \textit{singular} otherwise.

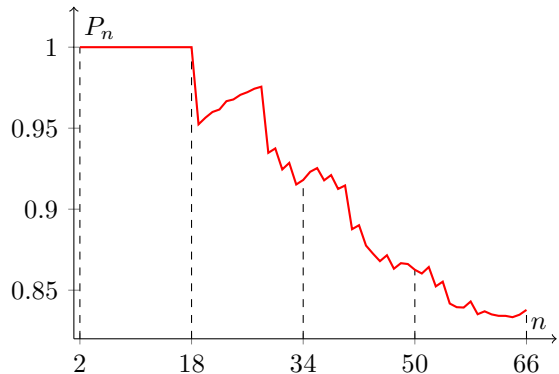
\begin{figure}
\centering
\begin{tikzpicture}
\begin{axis}[
	xmin=1.135135135,
	xmax=70.32432432,
	ymin=0.82,
	ymax=1.025,
    xtick={2,18,34,50,66},
    ytick={0.85,0.9,0.95,1},
	axis lines=middle,
    x axis line style=->,
    y axis line style=->,
	samples=100,
	xlabel=$n$,
	ylabel=$P_n$,
	width=8cm,
    height=6cm
]
\draw[dashed] (axis cs:0,1) -- (axis cs:2,1);
\draw[dashed] (axis cs:2,0.82) -- (axis cs:2,1);
\draw[dashed] (axis cs:18,0.82) -- (axis cs:18,1);
\draw[dashed] (axis cs:34,0.82) -- (axis cs:34., .91803);
\draw[dashed] (axis cs:50,0.82) -- (axis cs:50., .86260);
\draw[dashed] (axis cs:66,0.82) -- (axis cs:66., .83784);
\draw[thick,color=red] plot coordinates  {(axis cs:2., 1.) (axis cs:3., 1.) (axis cs:4., 1.) (axis cs:5., 1.) (axis cs:6., 1.) (axis cs:7., 1.) (axis cs:8., 1.) (axis cs:9., 1.) (axis cs:10., 1.) (axis cs:11., 1.) (axis cs:12., 1.) (axis cs:13., 1.) (axis cs:14., 1.) (axis cs:15., 1.) (axis cs:16., 1.) (axis cs:17., 1.) (axis cs:18., 1.) (axis cs:19., .95238) (axis cs:20., .95652) (axis cs:21., .96000) (axis cs:22., .96154) (axis cs:23., .96667) (axis cs:24., .96774) (axis cs:25., .97059) (axis cs:26., .97222) (axis cs:27., .97436) (axis cs:28., .97561) (axis cs:29., .93478) (axis cs:30., .93750) (axis cs:31., .92453) (axis cs:32., .92857) (axis cs:33., .91525) (axis cs:34., .91803) (axis cs:35., .92308) (axis cs:36., .92537) (axis cs:37., .91781) (axis cs:38., .92105) (axis cs:39., .91250) (axis cs:40., .91463) (axis cs:41., .88764) (axis cs:42., .89011) (axis cs:43., .87755) (axis cs:44., .87255) (axis cs:45., .86792) (axis cs:46., .87156) (axis cs:47., .86325) (axis cs:48., .86667) (axis cs:49., .86614) (axis cs:50., .86260) (axis cs:51., .86029) (axis cs:52., .86429) (axis cs:53., .85235) (axis cs:54., .85526) (axis cs:55., .84177) (axis cs:56., .83951) (axis cs:57., .83929) (axis cs:58., .84302) (axis cs:59., .83516) (axis cs:60., .83696) (axis cs:61., .83505) (axis cs:62., .83417) (axis cs:63., .83415) (axis cs:64., .83333) (axis cs:65., .83486) (axis cs:66., .83784)};
\end{axis}
\end{tikzpicture}
\caption{\label{fig:proportionXpts}\small
The decay of the proportion $P_n$ of fractions with denominator at most $n$ in the interval $\left[\frac{1}{2},\frac{2}{3}\right]$ which are X-points.}
\end{figure}

In the system $[0,x,1]$, most low-complexity rationals in $\left[\frac{1}{2},\frac{2}{3}\right]$ are X-points; the proportion decreases as complexity increases. This is shown in figure \ref{fig:proportionXpts}, where it is also seen that the smallest denominator of any rational in that interval which is not an X-point is $19$. It is therefore fitting that the authors of \cite{CellarosiMunday} restricted their computations to rational numbers with denominator at most $18$. In their algorithm, X-points appear as the endpoints of intervals where the combinatorics of the \mmm\ orbits remains the same.

X-points are the source of singularities. Indeed, if a bundle $\Xi_n$ is regular then 
its image bundle $\Xi_{n+1}$ is singular at $p\in\mathbb{Q}$ if and only if the median 
$\mathcal{M}_{n}$ is singular at $p$, in which case $p$ is an X-point incident with $\mathcal{M}_n$ or 
with one of the functions of which $\mathcal{M}_n$ is the average (see figure \ref{fig:X-points}). Figure \ref{fig:earlydynamics0x1} illustrates how such singularities develop gradually in the system $[0,x,1]$ to become as ubiquitous as seen in figure \ref{fig:m}.

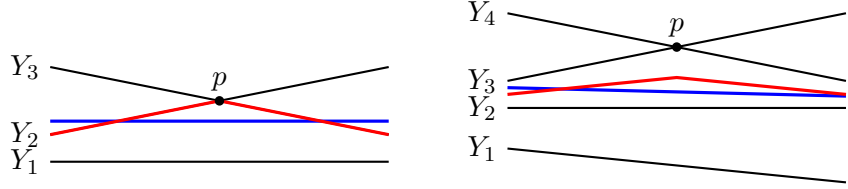
\begin{figure}[t]
\centering
\begin{tabular}{ccc}
\begin{tikzpicture}[scale=0.9]
\draw[thick] (-2.5,0.6) node[left] {$Y_1$} -- (2.5,0.6);
\draw[thick] (-2.5,1) node[left] {$Y_2$} -- (2.5,2);
\draw[thick] (-2.5,2) node[left] {$Y_3$} -- (2.5,1);
\draw[blue, very thick] (-2.5,1.2) -- (2.5,1.2);
\draw[red, very thick] (-2.5,1) -- (0,1.5) -- (2.5,1);
\fill[black] (0,1.5) node[above] {$p$} circle (2pt);
\end{tikzpicture}&\phantom{a}&
\begin{tikzpicture}[scale=0.9]
\draw[thick] (-2.5,0) node[left] {$Y_1$} -- (2.5,-0.5);
\draw[thick] (-2.5,0.6) node[left] {$Y_2$} -- (2.5,0.6);
\draw[thick] (-2.5,1) node[left] {$Y_3$} -- (2.5,2);
\draw[thick] (-2.5,2) node[left] {$Y_4$} -- (2.5,1);
\draw[blue, very thick] (-2.5,0.9) -- (2.5,0.775);
\draw[red, very thick] (-2.5,0.8) -- (0,1.05) -- (2.5,0.8);
\fill[black] (0,1.5) node[above] {$p$} circle (2pt);
\end{tikzpicture}
\end{tabular}

\caption{\label{fig:X-points}\rm\small
Origin of a singularity in a regular bundle $\Xi_n$, if $n$ is odd ($n=3$, left) and if $n$ is even ($n=4$, right). 
The blue and red functions are the mean and median of the bundle, respectively. 
The latter is singular, due to the presence of the X-point $p$. 
In either case, the image function $Y_{n+1}$ will be singular at $p$.
}
\end{figure}

Let us introduce some terminology concerning X-points.
An X-point is \textit{monotonic} if it has a neighbourhood where the median sequence 
is either non-decreasing or non-increasing. A monotonic X-point $p$ appearing above (below, respectively) the current median in the former (latter, respectively) case guarantees the existence of $m(p)$ by \cite[theorem 2.4]{ChamberlandMartelli}. For this reason, the limit function of the system $[0,x,1]$ exists at every X-point in $\left[\frac{1}{2},\frac{2}{3}\right]$.
There are X-points, such as $\frac{1}{2}$ for the bundle $[0,x,1]$, at which the 
increments of the median change sign; these are not monotonic.

An X-point $p$ \textit{stabilises} (or is \textit{stabilising}) if $\tau(p)<\infty$. A stabilising X-point $p=Y_i\bowtie Y_j$ is \textit{proper} if $\left\{Y_i,Y_j\right\}$ is the only pair of functions in the bundle $\Xi_{\tau(p)-1}$ which intersect transversally at the point $\left(p,Y_i(p)\right)$ and $Y_i$, $Y_j$ both have multiplicity $1$ in the same bundle. If the median sequence is non-decreasing, we associate to every stabilising monotonic X-point $p=Y_i\bowtie Y_j$ in the bundle $\Xi_{\tau(p)-1}$ the set\footnote{In its proper meaning, i.e., ignoring multiplicities.}
\begin{equation}\label{eq:SetOfFunctionsImmediatelyAbove}
\mathcal{Y}_p:=\left\{Y\in \Xi_{\tau(p)-1}:\Xi_{\tau(p)-1}(p)\cap \left(Y_i(p),Y(p)\right)
  =\varnothing,\,\,m(p)\in\left[Y_i(p),Y(p)\right)\right\}.
\end{equation}
If $\mathcal{Y}_p\neq\varnothing$, then we say that the stabilising X-point $p$ is \textit{active}\footnote{In such a case, we usually have $\left|\mathcal{Y}_p\right|=1$, but this is not always the case. For instance, $\frac{13}{23}=Y_7\bowtie Y_9$ is an X-point in the system $[0,x,1]$ for which we have $\mathcal{Y}_{\frac{13}{23}}=\left\{Y_{10},Y_{19}\right\}$.}, and as we shall see in section \ref{section:X-points}, the function $\min \mathcal{Y}_p$ will be of particular importance. On the left-hand side of $p$, the multiplicity of this function in the bundle $\Xi_{\tau(p)-1}$ is said to be the \textit{left-rank} of $p$. Similarly we define the \textit{right-rank} of $p$ in its right-hand side. If these two numbers are equal, then the common value is said to be the \textit{rank} of $p$. In the system $[0,x,1]$, every rational number in $\left[\frac{1}{2},\frac{2}{3}\right]$ with denominator between $3$ and $18$ inclusive studied in \cite{CellarosiMunday} is an X-point which is regular, monotonic, proper, active, and of rank $1$. Two examples of X-points which are not of rank $1$ are presented in figure \ref{fig:rank}.

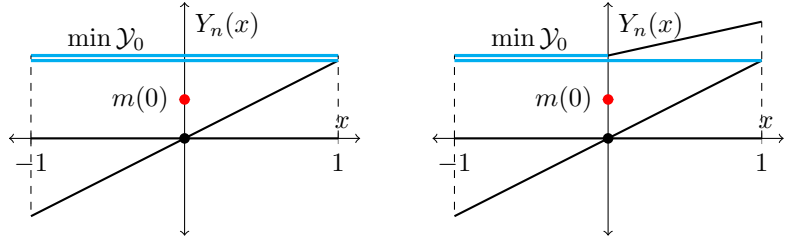
\begin{figure}
\centering
\begin{tabular}{ccc}
\begin{tikzpicture}
\begin{axis}[
	xmin=-1.142857143,
	xmax=1.142857143,
	ymin=-1.250000000,
	ymax=1.750000000,
    xtick={-1,1},
    ytick={0.5},
    yticklabels={$m(0)$},
	axis lines=middle,
	samples=100,
	xlabel=$x$,
	ylabel=$Y_n(x)$,
	width=12.5/16*8cm,
	height=12.5/16*6cm,
	clip=false,
	axis lines=middle,
    x axis line style=<->,
    y axis line style=<->,
]
\addplot [thick,domain=-1:1] {0};
\addplot [thick,domain=-1:1] {x};
\addplot [very thick,cyan,domain=-1:1] {1};
\addplot [very thick,cyan,domain=-1:1] {1.065};
\fill[black] (axis cs:0,0) circle (2pt);
\fill[red] (axis cs:0,0.5) circle (2pt);

\draw[dashed] (axis cs:-1,0) -- (axis cs:-1,1.065);
\draw[dashed] (axis cs:-1,0) -- (axis cs:-1,-1);
\draw[dashed] (axis cs:1,0) -- (axis cs:1,1.065);

\node[above right] at (axis cs:-1,1) {$\,\,\,\,\,\,\min\mathcal{Y}_0$};
\end{axis}
\end{tikzpicture}&\phantom{a}&
\begin{tikzpicture}
\begin{axis}[
	xmin=-1.142857143,
	xmax=1.142857143,
	ymin=-1.250000000,
	ymax=1.750000000,
    xtick={-1,1},
    ytick={0.5},
    yticklabels={$m(0)$},
	axis lines=middle,
	samples=100,
	xlabel=$x$,
	ylabel=$Y_n(x)$,
	width=12.5/16*8cm,
	height=12.5/16*6cm,
	clip=false,
	axis lines=middle,
    x axis line style=<->,
    y axis line style=<->,
]
\addplot [thick,domain=-1:1] {0};
\addplot [thick,domain=-1:1] {x};
\draw[thick] (axis cs:-1,1.065) -- (axis cs:0,1.065) -- (axis cs:1,1.5);
\addplot [very thick,cyan,domain=-1:0] {1.065};
\addplot [very thick,cyan,domain=-1:1] {1};

\fill[black] (axis cs:0,0) circle (2pt);
\fill[red] (axis cs:0,0.5) circle (2pt);

\draw[dashed] (axis cs:-1,0) -- (axis cs:-1,1.065);
\draw[dashed] (axis cs:-1,0) -- (axis cs:-1,-1);
\draw[dashed] (axis cs:1,0) -- (axis cs:1,1.5);

\node[above right] at (axis cs:-1,1) {$\,\,\,\,\,\,\min\mathcal{Y}_0$};
\end{axis}
\end{tikzpicture}
\end{tabular}
\caption{\label{fig:rank}\small
The bundle $[0,x,1,1]$ in which the origin is an X-point of rank $2$ (left) and the bundle $\left[0,x,1,Y_4(x)\right]$, where $Y_4(x)$ is equal to $1$ for $x\leqslant0$ and to $\frac{x}{2}+1$ for $x>0$, in which it is an X-point of left-rank $2$ and right-rank $1$.}
\end{figure}

A regular stabilising X-point formed by functions whose multiplicities in the bundle $\Xi_{\tau(p)-1}$ are not all $1$ has a simpler dynamics, which we shall now present. At such an X-point the limit function is regular but the transit time function has a jump discontinuity. To lighten up the notation, we write inequalities of functions ($Y_i>Y_j$ or $Y_i>c$, with $c$ a constant), to denote pointwise inequalities ($Y_i(x)>Y_j(x)$ or $Y_i(x)>c$), valid for all points in the domain under consideration.

\begin{lemma}\label{lemma:Type3}
Let $p=Y_i\bowtie Y_j$ be a regular stabilising X-point with $Y_i<Y_j$ if $x>p$, where the multiplicities of $Y_i$ and $Y_j$
in $\Xi_{\tau(p)-1}$ are $1$ and at least $2$, respectively.
Also assume that, locally,
\begin{equation}\label{eq:M_(l+1)}
\mathcal{M}_n=Y_i\land Y_j\qquad\text{and}\qquad
\mathcal{M}_{n+1}(x)=\begin{cases}
Y_j(x)&\text{if }x< p\\
\left\langle Y_i,Y_j\right\rangle(x)&\text{if }x\geqslant p,
\end{cases}
\end{equation}
for an odd integer $n$.
Then near $p$ we have $m(x)=Y_j(x)$ and
\begin{equation}\label{eq:tauType3}
\tau(x)=\begin{cases}
n+2&\text{if }x<p\\
n+4&\text{if }x\geqslant p.
\end{cases}
\end{equation}
\end{lemma}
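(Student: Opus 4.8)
The plan is to follow the median sequence $(\mathcal{M}_\ell)$ and the element sequence $(Y_\ell)$ in a neighbourhood of $p$ by repeated use of the recursion \eqref{eq:mmm2}, $Y_{\ell+1}=\ell(\mathcal{M}_\ell-\mathcal{M}_{\ell-1})+\mathcal{M}_\ell$, together with the elementary fact that \emph{adjoining to a bundle a copy of its current median leaves the median unchanged}. These combine into the reduction on which everything rests: if $\mathcal{M}_\ell=\mathcal{M}_{\ell+1}=:g$ on some set, then \eqref{eq:mmm2} gives $Y_{\ell+2}=g$ there, the elementary fact gives $\mathcal{M}_{\ell+2}=g$, and an immediate induction gives $Y_{\ell+k}=\mathcal{M}_{\ell+k}=g$ for all $k\geqslant 2$; hence the orbit stabilises, $m\equiv g$, and the transit time is at most $\ell+2$. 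So the whole matter reduces to locating, on each side of $p$, the first index at which the median repeats, and then to checking minimality of $\tau$. I would set $v:=Y_i(p)=Y_j(p)$ and record that, since $p=Y_i\bowtie Y_j$ is a transversal crossing of two functions regular at $p$, we have $Y_i>Y_j$ for $x<p$ and $Y_i<Y_j$ for $x>p$; thus $\mathcal{M}_n=Y_i\land Y_j$ equals $Y_j$ to the left of $p$ and $Y_i$ to its right.

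On $x<p$ the hypothesis gives $\mathcal{M}_n=\mathcal{M}_{n+1}=Y_j$, so the reduction applies with $\ell=n$: near $p$ on that side, $m(x)=Y_j(x)$ and $Y_{n+2}=Y_{n+3}=\cdots=Y_j$, whence $\tau(x)\leqslant n+2$. For equality I would check that $\mathcal{M}_{n-1}<Y_j$ there, i.e.\ that the median strictly increases at step $n$ near $p$ (otherwise the orbit would already be constant from step $n+1$ on); then $Y_{n+1}=n(\mathcal{M}_n-\mathcal{M}_{n-1})+Y_j\ne Y_j$, so $\tau(x)=n+2$. On $x>p$ the hypothesis gives $\mathcal{M}_n=Y_i$ and $\mathcal{M}_{n+1}=\langle Y_i,Y_j\rangle$, both strictly below $Y_j$, so the median has not yet repeated; applying \eqref{eq:mmm2} twice and simplifying,
\[
Y_{n+2}=Y_j+\tfrac{n}{2}(Y_j-Y_i)>Y_j,\qquad Y_{n+3}=Y_j+\tfrac{n+2}{2}(Y_j-Y_i)>Y_j\qquad(x>p).
\]
Because $Y_j$ has multiplicity at least $2$ in $\Xi_{n+1}$ and, as $\mathcal{M}_{n+1}=\langle Y_i,Y_j\rangle$, occupies the two bundle positions immediately above $Y_i$, adjoining the iterate $Y_{n+2}$ --- which lies strictly above that block --- does not shift the central position, so $\mathcal{M}_{n+2}=Y_j$; adjoining $Y_{n+3}>Y_j$ likewise gives $\mathcal{M}_{n+3}=Y_j$. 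Now $\mathcal{M}_{n+2}=\mathcal{M}_{n+3}=Y_j$, so the reduction applies with $\ell=n+2$: near $p$ on that side, $m(x)=Y_j(x)$ and $Y_{n+4}=Y_{n+5}=\cdots=Y_j$, while $Y_{n+3}\ne Y_j$ for $x\ne p$; hence $\tau(x)=n+4$.

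The point $p$ itself I would handle by continuity: $Y_i(p)=Y_j(p)=v$ and each $\mathcal{M}_\ell$ is continuous, so the identities above force $\mathcal{M}_\ell(p)=v$ for every $\ell\geqslant n$, whence $m(p)=v=Y_j(p)$; the value of $\tau$ at this single point is then read off by evaluating the first few iterates at $p$ directly. The step I expect to be the main obstacle is the combinatorial bookkeeping on the side $x>p$: one must pin down the exact sorted positions of $Y_i$, of the copies of $Y_j$, and of the new iterates $Y_{n+2}$, $Y_{n+3}$ inside each $\Xi_\ell$, so as to be certain that the median, once it reaches $Y_j$, never leaves it again --- this is precisely where the multiplicity hypothesis on $Y_j$ is used, and the natural way to organise it is through the core descriptions of Lemma~\ref{lemma:translation2}. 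A lesser point is justifying the strict inequality $\mathcal{M}_{n-1}<Y_j$ on $x<p$, on which the minimality of $\tau$ on that side rests.
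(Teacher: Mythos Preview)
Your approach is essentially the same as the paper's: compute $Y_{n+2}$ and $Y_{n+3}$ from the recursion \eqref{eq:mmm2}, observe they sit strictly above $Y_j$ on the right of $p$, use the multiplicity of $Y_j$ to see that $\mathcal{M}_{n+2}=\mathcal{M}_{n+3}=Y_j$, and conclude $Y_{n+4}=Y_j$. The paper's write-up is more compressed---it invokes the non-decreasing median sequence rather than spelling out the sorted positions, and it does not discuss minimality of $\tau$ at all (so your worry about $\mathcal{M}_{n-1}<Y_j$ on $x<p$ is a point you are treating more carefully than the paper does).
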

\begin{proof}
If $x< p$, then \eqref{eq:mmm2} easily gives $Y_{n+2}=Y_j$, and hence 
$m(x)=Y_j(x)$ and $\tau(x)=n+2$. 
If $x\geqslant p$, we use \eqref{eq:mmm2} to compute $Y_{n+2}$, $Y_{n+3}$, and $Y_{n+4}$. 
Firstly, $Y_{n+2}=\frac{n}{2}\left(Y_j-Y_i\right)+Y_j$, which lies above $Y_j$ since $Y_j>Y_i$. 
Next, since the median sequence is non-decreasing, 
then $\mathcal{M}_{n+3}=\mathcal{M}_{n+2}=Y_j$, and so we obtain 
$Y_{n+3}=\frac{n+2}{2}\left(Y_j-Y_i\right)+Y_j$, which also lies above $Y_j$. 
Finally, $Y_{n+4}=Y_j$, and hence $m(x)=Y_j(x)$ and $\tau(x)=n+4$, completing the proof.
\end{proof}

In lemma \ref{lemma:Type3}, the limit function is regular but the transit time increases by $2$. 
If instead we assume that $Y_i<Y_j$ if $x<p$, and we modify accordingly
equation \eqref{eq:M_(l+1)}, we obtain a mirror version of \eqref{eq:tauType3}
describing a \textit{decrease} by 2 of the transit time.

\section{Symmetries}\label{section:Symmetries}

This section is devoted to the discussion of geometrical aspects of a bundle, without 
regard for the dynamics. Specifically, we describe two-dimensional projective collineations
determined by an X-point $p=Y_i\bowtie Y_j$ and an \textit{auxiliary function} 
$Y$, which is any piecewise-affine function not through $p$, assuming that all these functions are locally regular except, possibly, at $p$. 

The subbundle containing only these three functions will be called a \textit{triad} and denoted by 
$\Omega=\left[Y_i,Y_j;Y\right]$. The projective collineation induces a non-trivial 
self-equivalence $\Omega\sim\Omega$, i.e.,
\begin{equation}\label{eq:triadequivalence}
f(\Omega(x))=\Omega(\mu(x)),
\end{equation}
which holds for all $x$ sufficiently close to $p$ and is specified by a pair of M\"obius and affine transformations $(\mu,f)$. Later in section \ref{section:X-points}, we shall develop suitable conditions under which 
such a symmetry is inherited by the limit function [cf.~\eqref{eq:FunctionalEquation}], i.e., under which \eqref{eq:triadequivalence} implies \eqref{eq:FunctionalEquation} for all $x$ sufficiently close to $p$, for the same pair $(\mu,f)$.

We shall use some concepts of projective geometry (for background, see \cite{Coxeter}).
We identify the point $(x,y)\in\mathbb{Q}^2$ with the projective point $(x,y,1)\in \mathbb{P}^2(\mathbb{Q})$, 
represented with homogeneous coordinates, and we denote by $o_{\infty}:=(0,1,0)$ 
the point at infinity on the ordinate axis [the line $(1,0,0)$]. The symbol $Y$ shall 
be used to denote both a function and its graph. 
If $Y$ is locally regular, by the \textit{line} $Y$ we mean the graph of the affine extension of $Y$.

Typically, the symmetry of a triad will be a \textit{homology}, namely a projective collineation 
with a line of fixed points (the axis) and an additional fixed point (the centre) not 
on the axis (theorem \ref{thm:GeneralSymmetry}). This type of symmetry will be useful
for X-points which are monotonic. If the X-point is also regular, then the homology is \textit{harmonic},
and hence the associated M\"obius transformation $\mu$ is involutory (corollary \ref{cor:RegularX-point}). The second
type of symmetry is a more general projectivity which is needed to handle non-monotonic X-points (theorem \ref{thm:monotonicityreversing}).

\subsection{Symmetry of triads}

We shall construct a ratio-preserving symmetry for a triad $\Omega=\left[Y_i,Y_j;Y\right]$. 
After defining the concatenations
$$
U:=Y_i\lor Y_j
\qquad\text{and}\qquad
L:=Y_i\land Y_j,
$$
we have $\left[Y_i,Y_j;Y\right]\sim\left[U,L;Y\right]$, and we shall use the latter triad
for our analysis.
We introduce the regular functions $I$, $J$, $K$, $I'$, $J'$, $K'$ as follows:
\begin{equation}\label{eq:GeneralTriad}
\Omega(x)=\left[U,L;Y\right](x)=\begin{cases}
\left[I,J;K\right](x)&\text{if } x\geqslant p\\
\left[I',J';K'\right](x)&\text{if } x<p.
\end{cases}
\end{equation}
On the affine plane, we have $J<I<K$ for $x>p$ and $J'<I'<K'$ for $x<p$.
The symmetry is described by the following theorem.

\begin{theorem}\label{thm:GeneralSymmetry}
The triad $\Omega$ given by \eqref{eq:GeneralTriad} determines a homology $\lambda$, 
with axis $po_\infty$ which maps $I$, $J$, $K$ to $I'$, $J'$, $K'$, respectively.
This homology induces a self-equivalence of $\Omega$ via the 
pair $(\mu,f)$ given by
\begin{equation}\label{eq:GeneralMu}
\mu=\left(\frac{K'-I'}{K'-J'}\right)^{-1}\circ\frac{K-I}{K-J}
\end{equation}
and
\begin{equation}\label{eq:generalf}
f(z)=\frac{K'(\mu(x))-I'(\mu(x))}{K(x)-I(x)}z+\frac{K(x)\cdot I'(\mu(x))-I(x)\cdot K'(\mu(x))}{K(x)-I(x)}.
\end{equation}
This self-equivalence is unique up to inversion.
The inverse of $\mu$ and the corresponding $f$ are obtained by interchanging 
all primed and unprimed quantities.
\end{theorem}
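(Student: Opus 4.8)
The plan is to construct the homology $\lambda$ directly from the incidence data of the triad and only then read off the pair $(\mu,f)$ and check it realises the self-equivalence; building $\lambda$ geometrically first has the advantage that we never have to verify ``by hand'' that the rational formulas \eqref{eq:GeneralMu}--\eqref{eq:generalf} assemble into an element of $\mathrm{PGL}_3(\mathbb{Q})$ — that is automatic once $\lambda$ is a genuine collineation.

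First I would record the incidences implicit in \eqref{eq:GeneralTriad}. Writing $X_0:=(p,Y_i(p))$ for the X-point, the lines $Y_i,Y_j$ are two distinct concurrent lines through $X_0$, so $U=Y_i\lor Y_j$ and $L=Y_i\land Y_j$ are singular at $p$; consequently $I,J,I',J'$ are all lines through $X_0$ with $I\ne I'$, $J\ne J'$, and $X_0$ lies on the candidate axis $\ell:=po_\infty$ (the vertical line $x=p$). The lines $K$ and $K'$ meet $\ell$ at the single point $(p,Y(p))\ne X_0$, since $Y$ is continuous and misses $p$. Now the four points $X_0$, $o_\infty$, $I\cap K$, $J\cap K$ are in general position (the first two lie on $\ell$; the last two lie on $K\ne\ell$, hence off $\ell$, and are distinct because $I\cap J=X_0\notin K$), so there is a unique collineation $\lambda$ carrying them to $X_0,o_\infty,I'\cap K',J'\cap K'$. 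Then $\lambda(K)=\overline{(I'\cap K')(J'\cap K')}=K'$, $\lambda(I)=\overline{X_0\,(I'\cap K')}=I'$, and similarly $\lambda(J)=J'$. Moreover $\lambda$ fixes the three distinct points $X_0$, $o_\infty$, $\ell\cap K=\ell\cap K'=(p,Y(p))$ of $\ell$ (the last equality using $K(p)=K'(p)$), so $\lambda|_\ell$ is a projectivity of a line with three fixed points, hence the identity: $\lambda$ fixes $\ell$ pointwise, so it is a perspective collineation with axis $\ell$. That its centre lies off $\ell$ — so it is a homology, not an elation — I would obtain by locating the centre as $\overline{(I\cap K)(I'\cap K')}\cap\overline{(J\cap K)(J'\cap K')}$ and checking this point is not on $\ell$; this is where the local regularity of $Y_i,Y_j$ (and, in the harmonic case, of $Y$) near $p$ is used, together with $I\ne I'$, $J\ne J'$.

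To extract $(\mu,f)$: since $\lambda$ fixes $o_\infty\in\ell$ it maps vertical lines to vertical lines, inducing on the abscissa a M\"obius transformation $\mu$ with rational coefficients and $\mu(p)=p$; on each fibre $\{x=x_0\}$ it is a projectivity fixing $o_\infty$, hence an affine map $f$ of the ordinate whose coefficients are rational functions of $x$. Preservation of the cross-ratio of the four collinear points $I(x_0),J(x_0),K(x_0),o_\infty$ — which equals $\tfrac{K-I}{K-J}(x_0)$ — under the correspondence with $I'(\mu(x_0)),J'(\mu(x_0)),K'(\mu(x_0)),o_\infty$ is precisely \eqref{eq:GeneralMu}, and the two remaining constraints $I(x_0)\mapsto I'(\mu(x_0))$, $K(x_0)\mapsto K'(\mu(x_0))$ force \eqref{eq:generalf}. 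Conversely, from these formulas a short substitution confirms $f(I(x))=I'(\mu(x))$ and $f(K(x))=K'(\mu(x))$, and then the defining identity of $\mu$ yields $f(J(x))=J'(\mu(x))$; combined with the sign check coming from $J<I<K$ and $J'<I'<K'$ (which makes $\tfrac{K-I}{K-J}$ decrease through $1$ at $p$ and $\tfrac{K'-I'}{K'-J'}$ increase through $1$ at $p$, so $\mu'(p)<0$, i.e. $\mu$ interchanges the two sides of $p$ and $\Omega(\mu(x))=[I',J';K'](\mu(x))$ for $x>p$), this gives $f(\Omega(x))=\Omega(\mu(x))$ for $x$ near $p$, which is \eqref{eq:triadequivalence}. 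Interchanging primed and unprimed quantities produces $\lambda^{-1}$ and the inverse equivalence; uniqueness up to inversion follows because any self-equivalence of $\Omega$ with axis $\ell$ is a collineation fixing $\ell$ pointwise whose action on the triple $\{U,L;Y\}$ is, by continuity at $p$ and the orderings, forced to match the max/min/auxiliary roles, so by the rigidity of plane collineations it must be $\lambda$ or $\lambda^{-1}$.

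I expect the main obstacle to be the low-level bookkeeping rather than any conceptual difficulty: verifying that the centre genuinely lies off $\ell$ (so that ``homology'' is the correct word, and so one can pin down exactly which non-degeneracy hypotheses are needed), and controlling the side-interchanging behaviour of $\mu$ together with the correct pairing of the three functions through the ordering data — it is these sign and genericity points, not the projective-geometric skeleton, that turn the abstract collineation into the asserted self-equivalence.
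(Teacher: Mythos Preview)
Your proposal is correct and follows essentially the same route as the paper: define the collineation from the triad data (the paper specifies $\lambda$ by its action on four lines rather than your dual four-point version), verify that $p$, $o_\infty$, and $K\cap\ell=K'\cap\ell$ are three distinct fixed points on $\ell=po_\infty$ so that $\ell$ is pointwise fixed, argue the centre lies off $\ell$ (the paper does this via a quadrangle construction and the ordering $J'<I'$, which is the detailed version of the ``bookkeeping'' you flag), and then read off $(\mu,f)$ from ratio preservation. The one small methodological difference is that the paper derives the ratio identity by invoking concurrence at the homology centre $o$, whereas you obtain it directly as the cross-ratio with $o_\infty$ --- a slight streamlining, since your derivation of $(\mu,f)$ does not need the centre to have been located first --- but otherwise the arguments coincide.
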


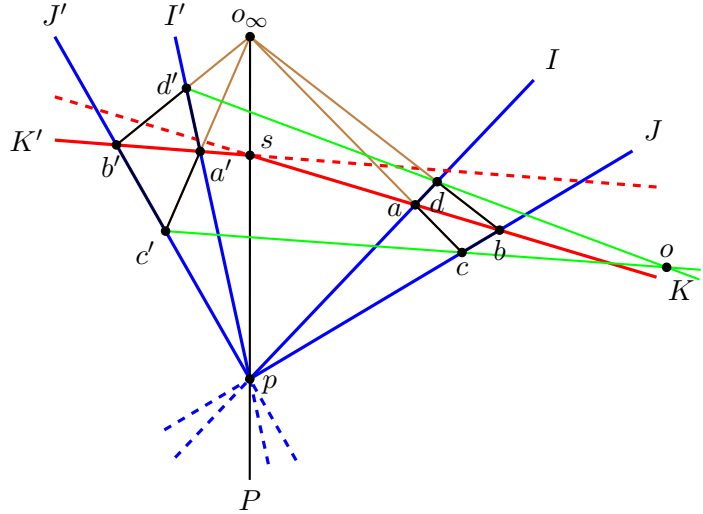
\begin{figure}[t]
\centering
\begin{tikzpicture}[scale=0.8]
\draw [very thick,blue,dashed] (1.82,1.46)--(3.2406828900207514,2.307296736464818);
\draw [very thick,blue] (3.2406828900207514,2.307296736464818)-- (9.6,6.1)node[above right,black] {$J$};
\draw [very thick,blue,dashed] (2.,1.)-- (3.2406828900207514,2.307296736464818);
\draw [very thick,blue] (3.2406828900207514,2.307296736464818)--(7.96,7.28)node[above right,black] {$I$};
\draw [very thick,blue] (3.2406828900207514,2.307296736464818)-- (2.,8.)node[above,black] {$I'$};
\draw [very thick,blue] (3.2406828900207514,2.307296736464818)-- (0.,8.)node[above,black] {$J'$};
\draw [very thick,blue,dashed] (3.2406828900207514,2.307296736464818)-- (3.5542444318582334,0.868562611173946);
\draw [very thick,blue,dashed] (4.010853273489069,0.9543869935988143)-- (3.2406828900207514,2.307296736464818);
\draw [very thick,red,dashed] (0.,7.)--(3.243243243243244,6.027027027027027);
\draw [very thick,red] (3.243243243243244,6.027027027027027)-- (10.,4.)node[right,yshift=-5pt,black] {$K$};
\draw [very thick,red,dashed] (10.,5.5)--(3.243243243243244,6.027027027027027);
\draw [very thick,red] (3.243243243243244,6.027027027027027)-- (0.,6.28)node[left,black] {$K'$};
\draw [thick] (3.2446016317765705,8.000519953463092)-- (3.23952784460491,0.6292245929272064)node[below] {$P$};
\draw [thick,brown] (3.2446016317765705,8.000519953463092)-- (1.0246408635347533,6.200078012644289);
\draw [thick,brown] (3.2446016317765705,8.000519953463092)-- (1.8407317723352095,4.766502578844286);
\draw [thick,brown] (3.2446016317765705,8.000519953463092)-- (7.391167192429021,4.782649842271294);
\draw [thick,brown] (3.2446016317765705,8.000519953463092)-- (6.767009586520685,4.4104016043002545);
\draw [thick,green] (2.1868512294322255,7.142658763057012)-- (10.711856834545715,3.9613329235126304);
\draw [thick,green] (10.733039803937594,4.123713098543144)-- (1.8407317723352095,4.766502578844286);
\draw [fill=black] (3.2406828900207514,2.307296736464818) circle (2.0pt)node[right,xshift=1pt,yshift=-2pt] {$p$};
\draw [fill=black] (3.243243243243244,6.027027027027027) circle (2.0pt)node[right,yshift=5pt] {$s$};
\draw [fill=black] (3.2446016317765705,8.000519953463092) circle (2.0pt) node[above] {$o_\infty$};
\draw [fill=black] (2.4159310610503626,6.091557377238071) circle (2.0pt) node[below,xshift=8pt,yshift=1pt] {$a'$};
\draw [fill=black] (1.0246408635347533,6.200078012644289) circle (2.0pt)node[below,xshift=-2pt] {$b'$};
\draw [fill=black] (1.8407317723352095,4.766502578844286) circle (2.0pt)node[below left] {$c'$};
\draw [fill=black] (2.1868512294322255,7.142658763057012) circle (2.0pt)node[left,yshift=2pt,xshift=1pt] {$d'$};
\draw [fill=black] (5.989092711948437,5.203272186415468) circle (2.0pt)node[left,yshift=-2pt,xshift=-1pt] {$a$};
\draw [fill=black] (7.391167192429021,4.782649842271294) circle (2.0pt)node[below] {$b$};
\draw [fill=black] (6.767009586520685,4.4104016043002545) circle (2.0pt)node[below] {$c$};
\draw [fill=black] (6.35386533361351,5.587629915284035) circle (2.0pt)node[below] {$d$};
\draw [fill=black] (10.167101212168962,4.164622542074844) circle (2.0pt)node[above] {$o$};
\draw[thick](2.4159310610503626,6.091557377238071)--(1.8407317723352095,4.766502578844286)--(1.0246408635347533,6.200078012644289)--(2.1868512294322255,7.142658763057012)--cycle;
\draw[thick](5.989092711948437,5.203272186415468)-- (6.767009586520685,4.4104016043002545)--(7.391167192429021,4.782649842271294) -- (6.35386533361351,5.587629915284035)--cycle;
\end{tikzpicture}
\caption{\label{fig:GeneralSymmetry}\rm\small
Construction of the homology $\lambda$ determined by the triad $\left[U,L;Y\right]$.
Here $U=I\lor I'$ and $L=J\lor J'$.
}
\end{figure}

\begin{proof}
With reference to figure \ref{fig:GeneralSymmetry}, we let $U$ be a concatenation
of $I$ and $I'$, $L$ a concatenation of $J$ and $J'$, and $Y$ a concatenation
of $K$ and $K'$, with the stipulation that primed (unprimed) quantities represent 
the functions to the left (right) of the X-point. 
We consider the projective collineation $\lambda$ determined by the following data:
\begin{equation}\label{eq:ProjectivityData}
\lambda(I)=I',
\qquad
\lambda(J)=J',
\qquad
\lambda(K)=K',
\qquad
\lambda\left(o_\infty\right)=o_\infty.
\end{equation}
From \eqref{eq:GeneralTriad} and the remark following it, $\lambda$ is not the identity.
We have $\lambda(p)=I'\cdot J'=p$, and hence the line $P:=po_\infty$ is invariant. 
The point $s:=P\cdot K$ is also invariant, as $\lambda(s)=P\cdot K'=s$, 
and hence $\lambda$ fixes $P$ pointwise.
Since $p$, $s$, $o_\infty$ are distinct, the points
\begin{equation*}\label{eq:abcd}
a:=I\cdot K,
\qquad
b:=J\cdot K,
\qquad
c:=J\cdot \left(o_\infty a\right),
\qquad
d:=I\cdot \left(o_\infty b\right)
\end{equation*}
are vertices of a quadrangle with no vertex on $P$.
The map $\lambda$ sends this quadrangle to its image $a'b'c'd'$, which is also a 
quadrangle, as $\lambda$ preserves incidence. 
Since $\lambda$ is not the identity, it can fix at most one of these vertices.
If such a fixed vertex exists, then we call it $o$, and $\lambda$
is a homology with axis $P$ and centre $o$.

If there is no fixed vertex, then we define the following lines:
\begin{equation}\label{eq:ABCD}
A:=aa',
\qquad
B:=bb',
\qquad
C:=cc',
\qquad
D:=dd'.
\end{equation}
All these lines are invariant under $\lambda$ since their intersection with $P$ is 
a fixed point, and none of these lines coincides with $P$.
Furthermore, no three of them can coincide; indeed if $A=B=C$, say, then $a,b,c$ 
would be collinear. 
Thus least two of these lines must be distinct, and we claim that they must be concurrent
at a point, which we shall call $o$. Indeed if two lines meet at $o$, a third line not
passing through $o$ would result in two (if $o$ is on $P$) or three (if $o$ is not
on $P$) fixed points of $\lambda$ lying outside $P$, making $\lambda$ the identity.

If $o$ and $P$ were incident, then, on the affine plane one would have $J'$ 
lying above $I'$ on the left of $p$, as easily verified, contradicting \eqref{eq:GeneralTriad}.
Thus $o$ and $P$ are not incident, and $\lambda$ is a homology.

Let us now consider the action of $\lambda$ on lines through $o_\infty$. Let $X$ be such a line, and let
\begin{equation}\label{eq:SimpleRatioDerivation1}
x:=X\cdot K,\qquad x_i:=X\cdot I,\qquad x_j:=X\cdot J.
\end{equation}
Letting $x'$, $x_i'$, $x_j'$ be the images of these points under $\lambda$, we have
\begin{equation}\label{eq:SimpleRatioDerivation2}
x'=\lambda(X)\cdot K',\qquad x_i'=\lambda(X)\cdot I',\qquad x_j'=\lambda(X)\cdot J'.
\end{equation}
Since $\lambda$ is a homology, then the lines $xx'$, $x_ix_i'$, $x_jx_j'$ are concurrent at $o$, and so
\begin{equation}\label{eq:SimpleRatioDerivation3}
\left(x';x_i',x_j'\right)=\left(x;x_i,x_j\right).
\end{equation}
This gives
\begin{equation}\label{eq:SimpleRatioDerivation4}
\frac{K'(\mu(x))-I'(\mu(x))}{K'(\mu(x))-J'(\mu(x))}=\frac{K(x)-I(x)}{K(x)-J(x)},
\end{equation}
where $x$ and $\mu(x)$ are the points obtained by projecting the lines $X$ and 
$\lambda(X)$, respectively, from $o_\infty$ to the real axis. Solving this for $\mu(x)$, one obtains \eqref{eq:GeneralMu}. Now let
$$
A(x):=\frac{K'(\mu(x))-I'(\mu(x))}{K(x)-I(x)}
  =\frac{K'(\mu(x))-J'(\mu(x))}{K(x)-J(x)}.
$$
Then
$$
K'(\mu(x))-A(x)K(x)=I'(\mu(x))-A(x)I(x)=J'(\mu(x))-A(x)J(x).
$$
Letting this quantity be $B(x)$, we obtain
$$
K'\left(\mu(x)\right)=f\left(K(x)\right),\qquad I'\left(\mu(x)\right)=f\left(I(x)\right),
\qquad J'\left(\mu(x)\right)=f\left(J(x)\right),
$$
where
\begin{eqnarray*}
f(z)&=&A(x)z+B(x)\\
    &=&\frac{K'(\mu(x))-I'(\mu(x))}{K(x)-I(x)}z+
     \left[K'(\mu(x))-\frac{K'(\mu(x))-I'(\mu(x))}{K(x)-I(x)}K(x)\right]\\
    &=&\frac{K'(\mu(x))-I'(\mu(x))}{K(x)-I(x)}z+\frac{K(x)\cdot 
     I'(\mu(x))-I(x)\cdot K'(\mu(x))}{K(x)-I(x)}.
\end{eqnarray*}
In other words, $\Omega\sim\Omega$ via the pair $(\mu,f)$ given by \eqref{eq:GeneralMu} 
and \eqref{eq:generalf}. The proof of theorem \ref{thm:GeneralSymmetry} is complete.
\end{proof}

The homology $\lambda$ is \textit{harmonic} if, for any given point $x$, the harmonic
conjugate of $o$ with respect to $x$ and its image $x'$ lies on $P$, namely $xx'\cdot P$.
For an explicit formula, we project from $o_\infty$ the points $x$, $x'$, $xx'\cdot P$, and $o$
to the real axis $\mathbb{R}=(0,1,0)$, letting $x$, $\mu(x)$, $p$, $q$ 
be the corresponding images.
Since these points form a harmonic set, their cross-ratio is equal to $-1$ 
\cite[page 38]{Schwerdtfeger}:
$$
\left(x,\mu(x);p,q\right)=\frac{(x;p,q)}{(\mu(x);p,q)}=-1,
$$
from which we obtain
\begin{equation}\label{eq:mu}
\mu(x)=\frac{Tx-2D}{2x-T},\qquad\text{where}\qquad T:=p+q \qquad\text{and}\qquad D:=pq,
\end{equation}
which is indeed an involution since the associated matrix $\left(\begin{array}{cc}T&-2D\\2&-T\end{array}\right)$ has zero trace \cite[page 49]{Schwerdtfeger}. 

Letting $o:=(q,r)$, the aforementioned collinearity translates to
$$
\det\left(\begin{array}{ccc}x&I'(x)&1\\\mu(x)&I(\mu(x))&1\\q&r&1\end{array}\right)=0,
$$
which determines the self-equivalence $I'(\mu(x))=f(I(x))$, where
\begin{equation}\label{eq:f}
f(z)=\frac{\mu(x)-q}{x-q}z+\frac{x-\mu(x)}{x-q}r.
\end{equation}
The same transformation applies to $J$ and $K$.

A significant instance of this phenomenon occurs if the X-point is \textit{regular},
for in this case $\lambda$ exchanges $I$ and $J$, since $I'=J$ and $J'=I$. 
If, in addition, the auxiliary function $Y$ is regular, then $o$ and $Y$ are incident,
so that formulae \eqref{eq:mu} and \eqref{eq:f} apply with $r=Y(q)$.

\begin{corollary} \label{cor:RegularX-point}
If $p$ is regular, then the homology $\lambda$ is harmonic, and hence the associated
function $\mu$ is an involution.
\end{corollary}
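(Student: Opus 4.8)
The plan is to exploit the observation already recorded just before the corollary: when $p$ is regular, the homology $\lambda$ interchanges the lines $I$ and $J$. Regularity of $p$ means $Y_i$ and $Y_j$ are affine near $p$ and cross transversally there, so in \eqref{eq:GeneralTriad} the upper concatenation $U=Y_i\lor Y_j$ is carried by the line of one of them to the right of $p$ and by the line of the other to the left, and symmetrically for $L=Y_i\land Y_j$. Hence $I'=J$ and $J'=I$ as lines, and the defining data \eqref{eq:ProjectivityData} read $\lambda(I)=J$, $\lambda(J)=I$, $\lambda(o_\infty)=o_\infty$, with $\lambda$ fixing the axis $P=po_\infty$ pointwise. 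Since the point $p=I\cdot J=I'\cdot J'$ at which these lines meet lies on $P$, the strategy is to convert the exchange $I\leftrightarrow J$ into the statement $\lambda^2=\mathrm{id}$; this is exactly what it means for the homology to be harmonic, and it forces the associated M\"obius map $\mu$ to be involutory.

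The key steps: first, $\lambda^2$ fixes $P$ pointwise and fixes the centre $o$ of $\lambda$, which by theorem \ref{thm:GeneralSymmetry} does not lie on $P$. A collineation of the plane fixing a line pointwise is a central collineation, and one that also fixes a point off that line is not a non-trivial elation; hence $\lambda^2$ is the identity or a homology with axis $P$ and centre $o$. Second, $\lambda^2$ fixes the lines $I$ and $J$ as well, since $\lambda^2(I)=\lambda(J)=I$ and likewise for $J$; these two lines are distinct (the affine functions $Y_i,Y_j$ differ) and neither is the vertical line $P$. A non-trivial homology fixes no line other than its axis unless that line passes through its centre; so $I$ and $J$ would both pass through $o$, forcing $o=I\cdot J=p\in P$, a contradiction. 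Therefore $\lambda^2=\mathrm{id}$, i.e.\ $\lambda$ is a harmonic (involutory) homology --- equivalently, for every $x$ the harmonic conjugate of $o$ with respect to $x$ and $\lambda(x)$ lies on $P$. Finally, since $\mu$ is obtained by projecting from $o_\infty$ the action of $\lambda$ on the real axis [cf.\ \eqref{eq:SimpleRatioDerivation4}], $\lambda^2=\mathrm{id}$ gives $\mu\circ\mu=\mathrm{id}$, consistently with the trace-zero matrix in \eqref{eq:mu}.

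The step needing most care is the passage to $\lambda^2=\mathrm{id}$: one must invoke cleanly the classical facts about central collineations (a line-pointwise-fixing collineation is central; homologies and elations are distinguished by whether the centre lies on the axis; a non-trivial homology's only fixed lines are its axis and the lines through its centre). An equivalent, perhaps more economical route is to work in the pencil of lines through $p$: there $\lambda$ acts as a one-dimensional projectivity whose two fixed members are $P$ and $op$ (distinct, since $o\notin P$) and which interchanges the two further members $I$ and $J$; a line-projectivity with two fixed points that swaps two other points is an involution, so its characteristic cross-ratio is $-1$, and this characteristic coincides with that of the homology $\lambda$ (a short homogeneous-coordinate check with centre $[0:0:1]$ and axis $\{z=0\}$). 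I would also note that regularity of the auxiliary function $Y$ is not required for the corollary; it enters only in the supplementary remark that then the centre $o$ lies on $Y$, giving $r=Y(q)$ in formulae \eqref{eq:mu} and \eqref{eq:f}.
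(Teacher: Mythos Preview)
Your proof is correct and follows the same route as the paper: the key observation, stated just before the corollary, is that regularity of $p$ gives $I'=J$ and $J'=I$, so $\lambda$ swaps $I$ and $J$; from this the paper simply asserts the corollary, while you have carefully filled in the deduction $\lambda^2=\mathrm{id}$ (via central-collineation facts, or equivalently via the pencil through $p$) that the paper leaves implicit. Your remark that regularity of $Y$ is not needed for the corollary itself is also consistent with the paper's presentation.
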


\subsection{Symmetry of pseudotriads}\label{section:Pseudotriads}

\begin{figure}[t]
\centering
\begin{tikzpicture}[scale=0.8]
\draw [very thick,red] (9.,2.)-- (3.,5.75);
\draw [very thick,red,dashed] (3.,5.75) -- (1.,7.)node[left,black] {$K$};
\draw [very thick,red] (1.,2.)node[left,black] {$K'$}-- (3.,2.25);
\draw [very thick,red,dashed] (3.,2.25)--(9.,3.);
\draw [very thick,blue] (3.,3.)-- (7.009421487603315,4.44909090909091) node[right,black] {$J$};
\draw [very thick,blue] (3.,3.)-- (5.306942148760338,6.20115702479339) node[above,black] {$I$};
\draw [very thick,blue] (3.,3.)-- (1.472231404958681,7.159834710743803) node[above,black] {$J'$};
\draw [very thick,blue] (3.,3.)-- (1.3895867768595074,1.6391735537190095) node[below,black] {$I'$};
\draw [thick] (3.,8.)node[above] {$o_\infty$}-- (3.,1.65)node[below] {$P$};
\draw [thick,brown] (3.,8.)-- (8.553371569300724,1.851624333988486) node[right,xshift=-3pt,yshift=-5.5pt,black] {$Q$};
\draw [fill=black] (3.,8.) circle (2.0pt);
\draw [fill=black] (3.,3.) node[below right] {$p$} circle (2.0pt);
\draw [fill=black] (3.,5.75) circle (2.0pt);
\draw [fill=black] (3.,2.25) circle (2.0pt);
\draw [fill=black] (7.666666666666667,2.8333333333333335)node[below] {$s$} circle (2.0pt);
\end{tikzpicture}
\caption{\label{fig:monotonicityreversing}\small\rm Construction of the projective collineation $\lambda$ near a non-monotonic X-point $p$. The two branches $K$ and $K'$
of the auxiliary function of the pseudotriad do not meet on $P$.}
\end{figure}
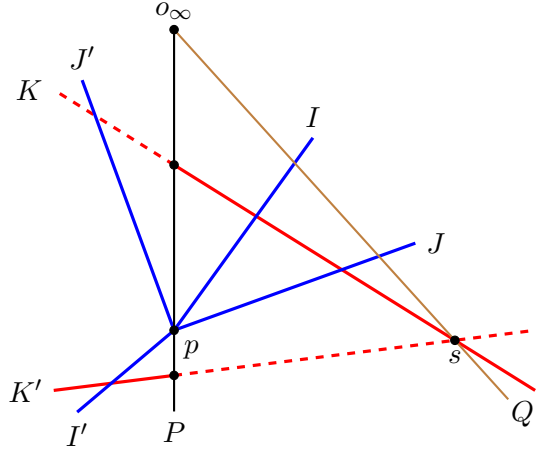

The triads considered so far will be relevant for X-points which are monotonic. The construction of an inheritable symmetry around a non-monotonic 
X-point ---such as the point $p=\frac{1}{2}$ in the system $[0,x,1]$---
requires a variant of the triad construct, a \textit{pseudotriad},
characterised by the fact that the auxiliary function has two distinct components
which do not meet on the line $po_\infty$.
Specifically, one has
\begin{equation}\label{eq:PseudoTriad}
\Omega(x)=\begin{cases}
[I,J;K](x)&\text{if } x\geqslant p\\
\left[I',J';K'\right](x)&\text{if } x<p.
\end{cases}
\end{equation}
As before, $I$, $J$, $I'$, $J'$ are concurrent at $p$, and $p$ is not on $K$ or $K'$.
If the median sequence is non-decreasing on the right of $p$
and non-increasing on the left, then we have $J<I<K$ for $x>p$ and 
$K'<I'<J'$ for $x<p$. Otherwise, the primed and unprimed functions are exchanged.
Our next result is the analogue of theorem \ref{thm:GeneralSymmetry} for non-monotonic X-points.

\begin{theorem}\label{thm:monotonicityreversing}
A pseudotriad \eqref{eq:PseudoTriad} determines a projective collineation
which maps the unprimed components to the corresponding primed ones and 
fixes $o_\infty$.
Such a collineation, which is a homology if and only if the
X-point is regular, induces a self-equivalence $\Omega\sim\Omega$ 
via the pair $(\mu,f)$ given by \eqref{eq:GeneralMu} and \eqref{eq:generalf}.
This self-equivalence is unique up to inversion.
The inverse of $\mu$ and the corresponding $f$ are obtained by interchanging 
all primed and unprimed quantities.
\end{theorem}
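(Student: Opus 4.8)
The plan is to mirror the proof of Theorem \ref{thm:GeneralSymmetry}, replacing the single auxiliary function with its two-branch version and tracking where the monotonicity hypothesis was actually used. First I would set up the projective collineation $\lambda$ by the same incidence data as in \eqref{eq:ProjectivityData}: $\lambda(I)=I'$, $\lambda(J)=J'$, $\lambda(K)=K'$, $\lambda(o_\infty)=o_\infty$. Here $I,J,I',J'$ are genuine lines (affine extensions of the four branches of $Y_i,Y_j$), concurrent at $p$, while $K$ and $K'$ are the lines extending the two branches of the auxiliary function $Y$; by hypothesis $p$ lies on neither. As before, $\lambda$ is not the identity because \eqref{eq:PseudoTriad} forbids $[I,J;K]$ and $[I',J';K']$ from coinciding. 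The key structural point is that $\lambda(p)=I'\cdot J'=p$ (the primed lines still meet at $p$), so the line $P:=po_\infty$ is invariant.

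Next I would argue, exactly as in Theorem \ref{thm:GeneralSymmetry}, that $\lambda$ fixes $P$ pointwise: the point $s:=P\cdot K$ maps to $P\cdot K'$, and since $p,o_\infty$ are already fixed on $P$, as soon as one further point of $P$ is fixed the whole line is fixed. The subtlety is that in the pseudotriad case $K$ and $K'$ need not meet on $P$, so $s=P\cdot K$ and $P\cdot K'$ need not coincide a priori; but a projective collineation fixing $P$ as a set and fixing two of its points ($p$ and $o_\infty$) either fixes $P$ pointwise or acts as a nontrivial projectivity of $P$ fixing two points — and in the latter case the cross-ratio condition forces $s$ and $P\cdot K'$ to be harmonic conjugates of $p,o_\infty$, which combined with the incidence data pins down $\lambda$ as an involution (a homology) precisely when the X-point is regular, and as the more general projectivity otherwise. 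I would then run the quadrangle argument of Theorem \ref{thm:GeneralSymmetry} verbatim to locate the centre $o$ as the common point of the lines $A,B,C,D$ joining corresponding vertices, using the orderings $J<I<K$ for $x>p$ and $K'<I'<J'$ for $x<p$ (from \eqref{eq:PseudoTriad}) to rule out the degenerate configuration where $o$ lands on $P$ — this is where the monotonicity-reversing hypothesis enters, replacing the inequality $J'<I'<K'$ used before. This establishes that $\lambda$ is a homology exactly when $p$ is regular, and a general projectivity fixing $o_\infty$ otherwise.

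Finally, the derivation of the explicit formulas \eqref{eq:GeneralMu} and \eqref{eq:generalf} is formally identical: project from $o_\infty$ a variable line $X$ and its image $\lambda(X)$ to the real axis, intersect with $I,J,K$ resp.\ $I',J',K'$, and use that $\lambda$ preserves the cross-ratio (or, when $\lambda$ is a homology, the simple ratio along lines through the centre) to obtain \eqref{eq:SimpleRatioDerivation4} with the two branches of the auxiliary function supplying $K$ on the right and $K'$ on the left. Solving for $\mu(x)$ gives \eqref{eq:GeneralMu}, and reading off the affine map in the $z$-variable gives \eqref{eq:generalf}, hence $\Omega\sim\Omega$ via $(\mu,f)$. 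Uniqueness up to inversion follows because the incidence data \eqref{eq:ProjectivityData} determine a projective collineation of $\mathbb{P}^2$ uniquely once we know it is not the identity, and swapping primed with unprimed quantities realises $\lambda^{-1}$. I expect the main obstacle to be the careful bookkeeping in the degenerate-configuration analysis: showing that the two non-concurrent branches $K,K'$ do not create a spurious extra fixed point off $P$ that would collapse $\lambda$ to the identity, and correctly identifying the regular/non-regular dichotomy — that is, verifying that $I'=J$, $J'=I$ holds precisely when $p$ is regular, so that $\lambda$ becomes harmonic (a homology) exactly in that case, whereas in general $\lambda$ merely fixes $o_\infty$ and the two points $p,s$ without being a homology.
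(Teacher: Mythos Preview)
Your approach has a genuine structural gap: you try to argue that $\lambda$ fixes the line $P=po_\infty$ pointwise, mirroring Theorem~\ref{thm:GeneralSymmetry}. But this is precisely what fails for a pseudotriad. The defining feature of \eqref{eq:PseudoTriad} is that the two branches $K$ and $K'$ do \emph{not} meet on $P$, so $\lambda(P\cdot K)=P\cdot K'\neq P\cdot K$, and $P$ carries only the two fixed points $p$ and $o_\infty$. Your attempted salvage via harmonic conjugates is not correct: a projectivity of $P$ fixing $p$ and $o_\infty$ can send $P\cdot K$ to any third point, with no harmonic constraint imposed by the data. Consequently the quadrangle argument for locating a centre $o$ off $P$ collapses, since that argument in Theorem~\ref{thm:GeneralSymmetry} used $P$ as the axis.

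The paper's proof proceeds quite differently. In the regular case one has $I=I'$ and $J=J'$ (not $I'=J$, $J'=I$ as you write at the end --- because the monotonicity reverses, the ordering relative to the auxiliary function flips, so the labels do \emph{not} swap across $p$). Hence three lines through $p$ are $\lambda$-invariant, so all are; then $s:=K\cdot K'$ is fixed, and the line $Q:=so_\infty$ is shown to be pointwise fixed, giving a homology with axis $Q$ and centre $p$. In the non-regular case one argues by contradiction that no line is fixed pointwise, so $\lambda$ is not a homology at all. For the formulas \eqref{eq:GeneralMu} and \eqref{eq:generalf} you do not need any homology structure: the identity \eqref{eq:SimpleRatioDerivation3} holds for any projective collineation fixing $o_\infty$, since the simple ratio of three finite collinear points on a line through $o_\infty$ equals their cross-ratio with $o_\infty$ as fourth point, and cross-ratio is preserved by $\lambda$. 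The algebraic derivation of $(\mu,f)$ then goes through verbatim.
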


\begin{proof}
As for triads, the projective collineation $\lambda$ is uniquely determined by the conditions
\eqref{eq:ProjectivityData}. Since $p$ is fixed, then the line $P:=po_\infty$ is 
invariant under $\lambda$, but not pointwise; indeed $\lambda(K\cdot P)=
K'\cdot P\not=K\cdot P$. 
First assume that the X-point is regular: $I=I'$ and $J=J'$. 
Then three lines through $p$ are invariant, and hence all lines through $p$ are invariant.
Considering the invariant line $ps$, where $s:=K\cdot K'$, we find that 
the point $s$ is also fixed, and hence the line $Q:=so_\infty$ is invariant.
Now, at least one of the fixed points $I\cdot Q$ or $J\cdot Q$ is distinct from $s$ 
(and obviously also from $o_\infty$), and hence $Q$, having three fixed points, is 
pointwise invariant. 
Thus if $p$ is regular, then $\lambda$ is a homology with axis $Q$ and centre $p$.

Now assume that $p$ is not regular. Then the pencil through $p$ has at most two fixed lines: $P$ and
$P'$ (possibly with $P=P'$). Suppose for a contradiction that there exists a line $L$ fixed
by $\lambda$ pointwise. If $L$ does not contain $p$, then all lines through $p$ are fixed, contradicting
the fact that $p$ is not regular. Otherwise, all lines through $o_\infty$ are fixed, and so the two
triples of points $o_\infty$, $I\cdot K$, $I'\cdot K'$ and $o_\infty$, $J\cdot K$, $J'\cdot K'$
are both collinear, which happens only if $I'$ lie above $J'$ on the affine plane, contradicting the
fact that $p$ is non-monotonic.

Thus, $\lambda$ fixes no line pointwise, and hence is not a homology. Moreover, the pencil through
$o_\infty$ has at most two fixed lines: $P$ and $Q$. Then $o:=P'\cdot Q$ is the third
fixed point of $\lambda$ which, generically, is distinct from $p$ and $o_\infty$.
(The non-generic configurations which result in $o$ coinciding with $p$ or with $o_\infty$ 
can be characterised using standard algebraic tools. We shall not
pursue this matter here.)

To obtain the $(\mu,f)$-self-equivalence, let $X$ be a line through $o_\infty$, and let the points
$x$, $x_i$, and $x_j$ be given by \eqref{eq:SimpleRatioDerivation1}. Moreover, let $x'$, $x_i'$, and
$x_j'$ be the their respective images under $\lambda$, i.e., \eqref{eq:SimpleRatioDerivation2}. Since
$\lambda$ is projective, it preserves simple ratio, i.e., \eqref{eq:SimpleRatioDerivation3} holds.
In other words, \eqref{eq:SimpleRatioDerivation4} holds, where $x$ and $\mu(x)$ are the points obtained
by projecting the lines $X$ and $\lambda(X)$, respectively, from $o_\infty$ to the real axis. Then one
continues by repeating the same algebraic argument as in the proof of theorem \ref{thm:GeneralSymmetry},
giving the desired self-equivalence.
\end{proof}

An example is provided by the regular X-point $p=\frac{1}{2}$ in the system $[0,x,1]$,
where the median sequence is non-decreasing (non-increasing) on the right-hand
(left-hand) side of $p$. The pseudotriad given by $I(x)=I'(x)=3x-1$, $J(x)=J'(x)=x$, $K(x)=1$, $K'(x)=0$, 
determines the homology with centre $\left(\frac{1}{2},\frac{1}{2}\right)$ and axis the line at infinity.
The induced self-equivalence is given by $\mu(x)=1-x$ and $f(z)=1-z$, where $\mu$ is an involution. 
By contrast, the symmetry of the non-monotonic X-point $p=0$ in the system 
$[-2,0,x,1]$ induces the M\"{o}bius transformation $\mu(x)=-2x$, which is not an involution.
 
\section{The system $[0,x,1,1]$}\label{section:0x11}

In this section we shall describe a simple model for dynamics near an X-point which will be the basis for
the theory for dynamics near a general X-point in the next section. The model is the initial bundle $[0,x,1,1]$, $x\in\mathbb{R}$, with its X-point $0=Y_1\bowtie Y_2$.

For this bundle, we will prove that the strong terminating conjecture holds globally (theorem \ref{thm:0x11}) without using the computer algorithm of \cite{CellarosiMunday}. First, we show that the bundle possesses a global self-equivalence (lemma \ref{lemma:symmetry0x11}) which enables us to restrict our attention only to the right-hand side of the X-point. Next, we give an analytical description of its dynamics (figure \ref{fig:Tree}) which exposes a dichotomy: at the 
X-point $0$, the limit function has either a local minimum or a discontinuity
(lemma \ref{lemma:0x11}). Finally, a computer is used to establish the first possibility 
by evaluating the limit function at a single judiciously chosen point.

To begin, let us write $\hat{\Xi}(x):=[0,x,1,1]$, where $x\in\mathbb{R}$. The bundle $\hat{\Xi}$ possesses the following global self-equivalence.

\begin{lemma}\label{lemma:symmetry0x11}
The self-equivalence $\hat{\Xi}\sim\hat{\Xi}$ holds globally via
\begin{equation}\label{eq:symmetry0x11}
\mu(x)=\frac{x}{x-1}\qquad\text{and}\qquad f(z)=\frac{1}{1-x}z-\frac{x}{1-x}.
\end{equation}
\end{lemma}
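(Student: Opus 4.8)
The plan is to verify directly that the pair $(\mu,f)$ given by \eqref{eq:symmetry0x11} satisfies the defining relation \eqref{eq:Equivalence} for $\Xi=\Xi'=\hat\Xi$, i.e. that $f(\hat\Xi(x)) = \hat\Xi(\mu(x))$ as unordered multisets, for every $x$ (away from the singular value $x=1$). Since $\hat\Xi(x) = [0,x,1,1]$, I would compute the image of each of the four elements under the affine map $z\mapsto \tfrac{1}{1-x}z - \tfrac{x}{1-x}$ and check that the resulting four values coincide, as a multiset, with $\hat\Xi(\mu(x)) = \bigl[0,\ \tfrac{x}{x-1},\ 1,\ 1\bigr]$.

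The key computation is elementary: $f(0) = -\tfrac{x}{1-x} = \tfrac{x}{x-1}$; $f(1) = \tfrac{1}{1-x} - \tfrac{x}{1-x} = \tfrac{1-x}{1-x} = 1$ (and likewise for the second copy of $1$, which accounts for the multiplicity); and $f(x) = \tfrac{x}{1-x} - \tfrac{x}{1-x} = 0$. Hence $f(\hat\Xi(x)) = \bigl[\tfrac{x}{x-1},\ 0,\ 1,\ 1\bigr] = \hat\Xi(\mu(x))$, as required. I would also note that $\mu(x)=\tfrac{x}{x-1}$ is a M\"obius transformation with rational (indeed integer) coefficients, and that $f$ is an affine transformation in $z$ with coefficients in $\mathbb{Q}(x)$, so $(\mu,f)$ is an admissible pair in the sense of \eqref{eq:Equivalence}; one checks quickly that $\mu$ is an involution ($\mu(\mu(x)) = \tfrac{x/(x-1)}{x/(x-1)-1} = \tfrac{x/(x-1)}{1/(x-1)} = x$), which is consistent with the self-equivalence being its own inverse. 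Finally I would remark that the correspondence $0\leftrightarrow x$, $1\leftrightarrow 1$ is a bijection of the elements that involves no cancellation (each target value is attained by exactly one source element, counting multiplicity), so $f$ acts elementwise "without cancellations" as the definition of equivalence demands.

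There is essentially no obstacle here: the statement is a one-line verification once the maps are written down, and the only mild subtlety is bookkeeping the multiplicity of the value $1$ and confirming that the element matching is a genuine multiset bijection rather than an accidental numerical coincidence at special $x$. (The content of the lemma is really that the \emph{correct} pair $(\mu,f)$ has been guessed; producing it is the work done implicitly via the triad/pseudotriad machinery of section \ref{section:Symmetries}, but the verification itself stands on its own.) Once \eqref{eq:symmetry0x11} is confirmed, the fact that it is inherited through the \mmm\ dynamics — and hence yields the functional equation $f(m(x)) = m(\mu(x))$ for the limit function — follows immediately from the general principle established earlier in the Preliminaries that $\bM$ preserves bundle equivalences.
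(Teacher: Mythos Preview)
Your proposal is correct and takes essentially the same approach as the paper: a direct verification of the multiset identity $f(\hat\Xi(x))=\hat\Xi(\mu(x))$. The paper writes the computation slightly more compactly, manipulating the whole multiset at once via $\left[0,\tfrac{x}{x-1},1,1\right]=\tfrac{1}{1-x}\left([0,x,1,1]-x\right)$, whereas you apply $f$ element by element; the content is identical.
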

\begin{proof}
For every $x\in\mathbb{R}$, we have\footnote{We write $[a,b,c]\pm x$ to mean $[a\pm x,b\pm x,c\pm x]$.}
\begin{eqnarray*}
\left[0,\frac{x}{x-1},1,1\right]&=&\frac{1}{1-x}\left[0,-x,1-x,1-x\right]\\
                                &=&\frac{1}{1-x}\left([x,0,1,1]-x\right)\\
                                &=&\frac{1}{1-x}\left[0,x,1,1\right]-\frac{x}{1-x},
\end{eqnarray*}
and hence the result.
\end{proof}

We are now ready to prove that the strong terminating conjecture holds globally for $\hat{\Xi}$.
The heart of our argument is the following lemma, which characterises the limit function of 
$\hat{\Xi}$ in the form of a dichotomy. The proof of this lemma is analytical,
providing a complete description of the dynamics of $\hat{\Xi}$ near the 
X-point $0=Y_1\bowtie Y_2$ (figure \ref{fig:Tree}). 
This proof will serve as the basis for the proof of the more general theorem \ref{thm:Generalrk2} in the next section.

\begin{lemma}\label{lemma:0x11}
For the initial bundle $\hat{\Xi}$, exactly one of the following holds:
\begin{itemize}
\item [i)] $m$ is continuous at $x=0$, and there exists $p_\infty\in (0,1)$ such that
$$m(x)=\begin{cases}
\frac{1}{2p_\infty}x+\frac{1}{2}&\text{if }0\leqslant x<p_\infty\\
\frac{1}{2\mu(p_\infty)}x+\frac{1}{2}&\text{if }\mu(p_\infty)<x<0\\
1&\text{otherwise;}
\end{cases}$$
\item [ii)] $m$ is discontinuous at $x=0$ and
$$m(x)=\begin{cases}
\frac{1}{2}&\text{if }x=0\\
1&\text{otherwise.}
\end{cases}$$
\end{itemize}
\end{lemma}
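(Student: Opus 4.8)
The plan is to track the \mmm\ orbit of $\hat\Xi(x)=[0,x,1,1]$ explicitly on the right-hand side of the X-point, i.e.\ for small $x>0$, using the core-based recursion \eqref{eq:pardeprecursion}, and to show that the median sequence organises itself into a finite tree of combinatorial regimes (figure \ref{fig:Tree}) whose leaves are exactly the two alternatives in the statement. By Lemma \ref{lemma:symmetry0x11} the left-hand side is then determined via the M\"obius pair $(\mu,f)$ of \eqref{eq:symmetry0x11}, so it suffices to analyse $x>0$ small and then push the conclusion through $f$ and $\mu$; note $\mu(p_\infty)<0$ for $p_\infty\in(0,1)$, which is why the stated formula has the primed branch on the left.

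First I would compute the opening moves. For $0<x$ small, the bundle is $[0,x,1,1]$ with median $\langle x,1\rangle=\tfrac{x+1}{2}$ and mean $\tfrac{2+x}{4}$, so $Y_5=4\bigl(\tfrac{x+1}{2}\bigr)-\tfrac{x+1}{2}\cdot 0\dots$ — more cleanly, use \eqref{eq:mmm2}: $Y_5=4(\mathcal M_4-\mathcal M_3)+\mathcal M_4$, and iterate. The key structural point is that as long as the two functions $Y_1\equiv 0$ and $Y_2=x$ sit strictly below everything else near $0$, the cores $\lambda_n$ involve only a small, slowly changing set of functions, and \eqref{eq:pardeprecursion} shows each new $Y_{n+1}$ is an explicit affine function of $x$ whose slope grows linearly in $n$. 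One shows that the median sequence is a concatenation of affine pieces, and that at $x=0$ every iterate equals either $\tfrac12$ or $1$; the branching of the tree occurs at the rational abscissae where a newly created function $Y_{n+1}$ crosses the current median. At each such branch there are finitely many sub-regimes, and in each the dynamics either (a) reaches a configuration where the median has locally stabilised at the value $1$ with $\tfrac12$ attained only at $x=0$ — giving alternative ii) — or (b) produces a new, smaller X-point $p'=Y_i\bowtie Y_j$ strictly between the old one and $1$, with the same qualitative configuration rescaled; this is the self-similar step that must be controlled.

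The crux is to show that case (b) cannot recur indefinitely without (a) occurring, or equivalently that the induced sequence of X-points $0=p_0>\! <p_1,p_2,\dots$ either terminates (alternative ii)) or converges to some $p_\infty\in(0,1)$ with $m$ affine on $[0,p_\infty)$ of the stated slope (alternative i)). For this I would extract from the recursion a \emph{contraction estimate}: the self-equivalence of Theorem \ref{thm:GeneralSymmetry} (harmonic, by Corollary \ref{cor:RegularX-point}, since the X-point $0=Y_1\bowtie Y_2$ is regular) conjugates the local dynamics near $p_k$ to that near $p_{k+1}$ by an explicit M\"obius map, and one checks that the multiplier of this map has absolute value $<1$, forcing geometric convergence of $(p_k)$ and summability of the median increments; combined with the monotonicity of $(\mathcal M_n)$ from \cite[theorem 2.1]{ChamberlandMartelli} this yields that $m$ is continuous at $0$ with the affine formula, i.e.\ alternative i). The "exactly one" clause then follows because the two leaf-types are mutually exclusive: in ii) $m$ is discontinuous at $0$, in i) it is continuous.

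The step I expect to be the main obstacle is the bookkeeping in the branching analysis — proving that the tree of combinatorial regimes is finite in breadth at each level and that every infinite path down it is of the self-similar type (b), so that the dichotomy is genuinely exhaustive. This requires carefully identifying, at each regime, which pairs of functions can form the next X-point and ruling out "pathological" crossings (e.g.\ a third function interfering, or the median increments failing to be eventually one-signed), which is precisely the kind of case distinction that figure \ref{fig:Tree} is meant to encode; the affine-slope growth from \eqref{eq:pardeprecursion} is the tool that keeps this finite, since large slopes force the relevant crossings out of the shrinking neighbourhood of $0$.
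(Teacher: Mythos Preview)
Your proposal misreads what the lemma actually asks for and, as a result, invests effort in the wrong place.

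\textbf{The lemma is only a dichotomy.} You are trying to prove that your ``case (b)'' self-similar recursion cannot recur forever, using a contraction estimate coming from the harmonic homology. But the lemma does \emph{not} assert that alternative i) holds; it asserts that \emph{one} of i), ii) holds, and the paper deliberately leaves the decision to Theorem~\ref{thm:0x11}, which resolves it by a single numerical evaluation. So the contraction argument is aimed at the wrong target (and, in fact, were you to prove contraction you would be proving strictly more than the lemma claims).

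\textbf{The mechanism is simpler than you think.} The paper's proof rests on one structural observation you do not mention: since $\hat\Xi(0)=[0,0,1,1]$ stabilises immediately at $\tfrac12$, every iterate $Y_n$ with $n\geqslant 5$ passes through the fixed point $o=(0,\tfrac12)$. Thus every new function is an affine ray through $o$, and the only dynamical data is where that ray meets the auxiliary line $Y=1$. The paper then defines $p_n$ as the right endpoint of the interval on which the bundle (or its core) is regular, and a parity case-split on the core $\lambda_n$ shows that $(p_n)_{n\geqslant 5}$ is non-increasing (this is the content of figure~\ref{fig:Tree}). Since it is bounded below by $0$, it converges to some $p_\infty^+\in[0,1)$; no contraction or M\"obius conjugacy is invoked. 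The dichotomy is then immediate: $p_\infty^+>0$ gives alternative i), $p_\infty^+=0$ gives alternative ii). Lemma~\ref{lemma:symmetry0x11} transfers the conclusion to $x<0$.

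\textbf{Your (a)/(b) labelling is inverted.} You describe (a), ``the median has locally stabilised at the value $1$'', as alternative ii). But alternative ii) is precisely the case where the median \emph{never} stabilises as a function near $0$: the transit time blows up as $x\to 0^+$ because $p_n\downarrow 0$. Conversely, when the sequence $(p_n)$ stabilises (a sub-case of your ``terminates''), the limit function is the affine ray through $o$ --- alternative i), not ii). Also, the iterates $Y_n(0)$ are not ``either $\tfrac12$ or $1$'': for $n\geqslant 5$ they are all $\tfrac12$.

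\textbf{What to keep.} Your instinct to use the self-equivalence is correct, but only for the final step (passing from $x>0$ to $x<0$); it plays no role in the right-hand analysis. The ``bookkeeping in the branching analysis'' you flag as the obstacle is indeed where the work is, but it is finite and explicit (six cases, figure~\ref{fig:Tree}), and the pivot $o=(0,\tfrac12)$ is what makes it tractable.
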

\begin{proof}
Let $\Xi_4:=\hat{\Xi}$. For every $n\geqslant4$, let
$$Y_{n+1}:=\rM\left(\Xi_n\right)\qquad\qquad\text{and}\qquad\qquad\Xi_{n+1}:=\mathbf{M}\left(\Xi_n\right).$$
Moreover, for every $n\geqslant4$, let 
$$p_n:=\max\left\{\overline{p}\in\mathbb{Q}\cup\{\infty\}:\Xi_n\text{ is regular in }\left(0,\overline{p}\right)\right\}$$
and
$$q_n:=\max\left\{\overline{q}\in\mathbb{Q}\cup\{\infty\}:\Lambda_n\text{ is regular in }\left(0,\overline{q}\right)\right\},$$
where $\Lambda_n$ denotes the core of $\Xi_n$, so that
$$U_n:=\left(0,p_n\right)\qquad\qquad\text{and}\qquad\qquad V_n:=\left(0,q_n\right)$$
are the domains of regularity of the bundle and the core, respectively, at time step $n$. In addition, let
$$I_n:=\left\{x>0:Y_n(x)=1\right\},\qquad\text{for every }n\geqslant5,$$
and
$$H_n:=\bigcup_{i=5}^n I_i,\qquad\text{for every }n\geqslant6.$$

Let us now study how $U_n$ and $V_n$ shrink as $n$ increases. First, it is clear that $p_4=\infty$ and $q_4=1$, and after direct calculation, $p_5=1$ and $q_5=\frac{1}{3}$. 
In the next step, we take advantage of the fact that the rational set $\hat{\Xi}(0)$ stabilises immediately at $\frac{1}{2}$, which means that for every $n\geqslant5$, the function $Y_n$ passes through the point $o:=\left(0,\frac{1}{2}\right)$.

Now let $n\geqslant 5$. Our aim is to express $p_{n+1}$ and $q_{n+1}$ in terms of $p_n$ and $q_n$. For this purpose, we shall focus our attention to the interval $V_n$ and divide the analysis into two cases according to the parity of $n$:\medskip

\begin{figure}
\centering
\begin{tabular}{ccc}
\begin{tikzpicture}
\begin{axis}[
	xmin=0,
	xmax=1.066666667,
	ymin=0,
	ymax=2.7,
    xtick={0.0769230769230769,0.33333333333,1},
    ytick={0.5,1},
    xticklabels={$q_n$,$p_n$},
    yticklabels={$\frac{1}{2}$,$1$},
	axis lines=middle,
	samples=100,
	xlabel=$x$,
	ylabel=$Y_n(x)$,
	width=12.5/16*8cm,
	height=12.5/16*6cm,
	clip=false,
	axis lines=middle,
    x axis line style=->,
    y axis line style=->,
]
\addplot [thick,domain=0:1] {0};
\addplot [thick,domain=0:1] {1};
\addplot [thick,domain=0:1] {1.065};
\addplot [thick,domain=0:1] {3/2*x+1/2};
\addplot [thick,domain=0:0.33333333333] {13/2*x+1/2}; 
\addplot [thick,domain=0.33333333333:1] {7/2-5/2*x}; 
\node[right] at (axis cs:1,2) {$Y_i$};
\node[left] at (axis cs:0.33333333333,2.666666666666667) {$Y_j$};
\draw[dashed] (axis cs:0.33333333333,0) -- (axis cs:0.33333333333,2.666666666666667);
\draw[dashed] (axis cs:0.0769230769230769,0) -- (axis cs:0.0769230769230769,1);
\draw [color=cyan,very thick] (axis cs:0,0.5) -- (axis cs:0.33333333333,1);
\draw [color=cyan,very thick] (axis cs:0,0.5) -- (axis cs:0.0769230769230769,1) -- (axis cs:1,1);
\draw [color=cyan,very thick] (axis cs:0.33333333333,1.065) -- (axis cs:1,1.065);
\draw [color=red,dashed,ultra thick] (axis cs:0,0.5) -- (axis cs:0.0769230769230769,0.8076923076923077) -- (axis cs:0.33333333333,1) -- (axis cs:1,1);
\draw[dashed] (axis cs:1,0) -- (axis cs:1,2);
\end{axis}
\end{tikzpicture}&\phantom{a}&
\begin{tikzpicture}
\begin{axis}[
	xmin=0,
	xmax=1.066666667,
	ymin=0,
	ymax=2.7,
    xtick={0.0263157894736842,0.0769230769230769,1},
    ytick={0.5,1},
    xticklabels={$q_n$,$p_n$},
    yticklabels={$\frac{1}{2}$,$1$},
	axis lines=middle,
	samples=100,
	xlabel=$x$,
	ylabel=$Y_n(x)$,
	width=12.5/16*8cm,
	height=12.5/16*6cm,
	clip=false,
	axis lines=middle,
    x axis line style=->,
    y axis line style=->,
]
\addplot [thick,domain=0:1] {0};
\addplot [thick,domain=0:1] {1};
\addplot [thick,domain=0:1] {1.065};
\addplot [thick,domain=0:1] {3/2*x+1/2};
\addplot [thick,domain=0:0.33333333333] {13/2*x+1/2}; 
\addplot [thick,domain=0.33333333333:1] {7/2-5/2*x}; 
\addplot [thick,domain=0.33333333333:1] {1}; 
\addplot [thick,domain=0.0769230769230769:0.33333333333] {-15/4*x+9/4}; 
\addplot [thick,domain=0:0.0769230769230769] {19*x+1/2}; 
\node[right] at (axis cs:1,2) {$Y_i$};
\node[left] at (axis cs:0.33333333333,2.666666666666667) {$Y_j$};
\node[above] at (axis cs:0.09,1.9) {$Y_k$};
\draw[dashed] (axis cs:0.0769230769230769,0) -- (axis cs:0.0769230769230769,1.961538462);
\draw[dashed] (axis cs:0.0263157894736842,0)--(axis cs:0.0263157894736842,1);
\draw [color=cyan,very thick] (axis cs:0,0.5) -- (axis cs:0.0263157894736842,1) -- (axis cs:1,1);
\draw [color=cyan,very thick] (axis cs:0,0.5) -- (axis cs:0.0769230769230769,1);
\draw [color=cyan,very thick] (axis cs:0.0769230769230769,1.065) -- (axis cs:1,1.065);
\draw [color=cyan,very thick] (axis cs:0,0.5) -- (axis cs:0.33333333333,1);
\draw [color=cyan,very thick] (axis cs:0.33333333333,1.0325) -- (axis cs:1,1.0325);
\draw [color=red,dashed,ultra thick] (axis cs:0,0.5) -- (axis cs:0.0769230769230769,1) -- (axis cs:1,1);
\draw[dashed] (axis cs:1,0) -- (axis cs:1,2);
\end{axis}
\end{tikzpicture}
\end{tabular}
\caption{\label{fig:0x11}\rm\small Even-to-odd iteration (left) and odd-to-even iteration (right).}
\end{figure}
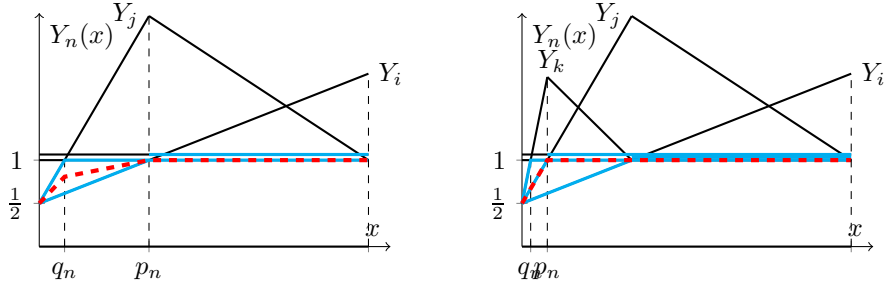

\noindent\textsc{\underline{Case I}}: $n$ is even. Let $\Lambda_n:=\left[Y_i,Y_j\right]$, where $Y_i\leqslant Y_j$, as in Figure \ref{fig:0x11} (left). Then $Y_i\left(p_n\right)=Y_j\left(q_n\right)=1$. Moreover, we have $\mathcal{M}_n=\left\langle Y_i,Y_j\right\rangle$ and $\mathcal{M}_{n-1}=Y_i$, and so by \eqref{eq:mmm2},
$$Y_{n+1}=\frac{n-1}{2}\left(Y_j-Y_i\right)+Y_j.$$
If $Y_i=Y_j$, then $Y_{n+1}=\mathcal{M}_n$, which means that the functional orbit \textit{stabilises}, and
\begin{equation}\label{eq:case1}
p_{n+1}=q_n\hspace{2cm}\text{and}\hspace{2cm}q_{n+1}=q_n.
\end{equation}
If $Y_i<Y_j$, then $Y_{n+1}$ is a new function lying above $Y_j$. Therefore,
\begin{equation}\label{eq:case2}
p_{n+1}=q_n\hspace{2cm}\text{and}\hspace{2cm}q_{n+1}=\max\left(H_{n+1}\cap U_{n+1}\right).
\end{equation}\smallskip

\noindent\textsc{\underline{Case II}}: $n$ is odd. Let $\Lambda_n=\left[Y_i,Y_j,Y_k\right]$, 
where $Y_i\leqslant Y_j\leqslant Y_k$, as in Figure \ref{fig:0x11} (right). 
Then $Y_j\left(p_n\right)=Y_k\left(q_n\right)=1$. If $Y_j=Y_k$, then $\mathcal{M}_{n+1}=Y_j=\mathcal{M}_{n+2}$, so the functional orbit \textit{stabilises}, and
\begin{equation}\label{eq:case3}
p_{n+1}=q_n\hspace{2cm}\text{and}\hspace{2cm}q_{n+1}=q_n.
\end{equation}
Now let $Y_j<Y_k$. Then $\mathcal{M}_n=Y_j$ and $\mathcal{M}_{n-1}=\left\langle Y_i,Y_j\right\rangle$, and so by \eqref{eq:mmm2},
$$Y_{n+1}=\frac{n}{2}\left(Y_j-Y_i\right)+Y_j.$$
If $Y_i=Y_j$, then $Y_{n+1}=\mathcal{M}_n$, which means that the functional orbit \textit{stabilises}, and
\begin{equation}\label{eq:case4}
p_{n+1}=p_n\hspace{2cm}\text{and}\hspace{2cm}q_{n+1}=p_n.
\end{equation}
If $Y_i<Y_j$, then $Y_{n+1}$ is a new function lying above $Y_j$. If $Y_{n+1}<Y_k$, then
\begin{equation}\label{eq:case5}
p_{n+1}=p_n\hspace{2cm}\text{and}\hspace{2cm}q_{n+1}=\min I_{n+1}.
\end{equation}
If $Y_{n+1}\geqslant Y_k$, then
\begin{equation}\label{eq:case6}
p_{n+1}=p_n\hspace{2cm}\text{and}\hspace{2cm}q_{n+1}=q_n.
\end{equation}\medskip

\noindent Therefore, there are six different cases corresponding to six possible pairs of expressions of $p_{n+1}$ and $q_{n+1}$ in terms of $p_n$ and $q_n$. 
These are summarised in figure \ref{fig:Tree}.

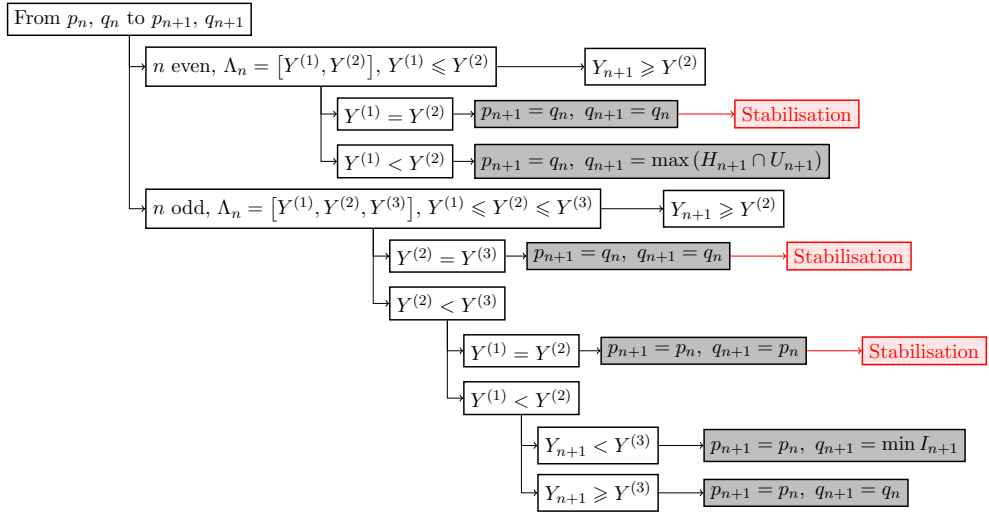
\begin{figure}
\centering
\tikzstyle{every node}=[draw=black,thick,anchor=west]
\tikzstyle{optional}=[draw=black,thick,fill=gray!50]
\tikzstyle{stabilisation}=[draw=red,thick,fill=red!10]
\scalebox{0.7}{\begin{tikzpicture}[
  grow via three points={one child at (0.3,-0.9) and
  two children at (0.3,-0.9) and (0.3,-1.8)},
  edge from parent path={[->] (\tikzparentnode.south) |- (\tikzchildnode.west)}
  ]
  \node {From $p_{n}$, $q_{n}$ to $p_{n+1}$, $q_{n+1}$}
    child { node {$n$ even, $\Lambda_n=\left[Y^{(1)},Y^{(2)}\right]$, $Y^{(1)}\leqslant Y^{(2)}$}
      child { node {$Y^{(1)}=Y^{(2)}$} child[grow=right,edge from parent path={[->] (\tikzparentnode.east) |- (\tikzchildnode.west)}] {node[optional] {$p_{n+1}=q_n$,\,\,\,$q_{n+1}=q_n$} child[grow=right,level distance=30mm,edge from parent path={[->,red] (\tikzparentnode.east) |- (\tikzchildnode.west)}] {node[stabilisation] {\textcolor{red}{Stabilisation}}}}
      }
      child { node {$Y^{(1)}<Y^{(2)}$} child[grow=right,edge from parent path={[->] (\tikzparentnode.east) |- (\tikzchildnode.west)}] {node[optional] {$p_{n+1}=q_n$,\,\,\,$q_{n+1}=\max \left(H_{n+1}\cap U_{n+1}\right)$}}
      }
      child[grow=right,level distance=50mm,edge from parent path={[->] (\tikzparentnode.east) |- (\tikzchildnode.west)}] {node {$Y_{n+1}\geqslant Y^{(2)}$}}
    }
    child [missing] {}
    child [missing] {}
    child { node {$n$ odd, $\Lambda_n=\left[Y^{(1)},Y^{(2)},Y^{(3)}\right]$, $Y^{(1)}\leqslant Y^{(2)}\leqslant Y^{(3)}$}
      child { node {$Y^{(2)}=Y^{(3)}$} child[grow=right,edge from parent path={[->] (\tikzparentnode.east) |- (\tikzchildnode.west)}] {node[optional] {$p_{n+1}=q_n$,\,\,\,$q_{n+1}=q_n$} child[grow=right,level distance=30mm,edge from parent path={[->,red] (\tikzparentnode.east) |- (\tikzchildnode.west)}] {node[stabilisation] {\textcolor{red}{Stabilisation}}}}
      }
      child { node {$Y^{(2)}<Y^{(3)}$}    
         child { node {$Y^{(1)}=Y^{(2)}$} child[grow=right,edge from parent path={[->] (\tikzparentnode.east) |- (\tikzchildnode.west)}] {node[optional] {$p_{n+1}=p_n$,\,\,\,$q_{n+1}=p_n$} child[grow=right,level distance=30mm,edge from parent path={[->,red] (\tikzparentnode.east) |- (\tikzchildnode.west)}] {node[stabilisation] {\textcolor{red}{Stabilisation}}}}
      }
         child { node {$Y^{(1)}<Y^{(2)}$}
            child { node {$Y_{n+1}<Y^{(3)}$} child[grow=right,level distance=20mm,edge from parent path={[->] (\tikzparentnode.east) |- (\tikzchildnode.west)}] {node[optional] {$p_{n+1}=p_n$,\,\,\,$q_{n+1}=\min I_{n+1}$}}
      }
            child { node {$Y_{n+1}\geqslant Y^{(3)}$} child[grow=right,level distance=20mm,edge from parent path={[->] (\tikzparentnode.east) |- (\tikzchildnode.west)}] {node[optional] {$p_{n+1}=p_n$,\,\,\,$q_{n+1}=q_n$}}
      }
    }}
          child[grow=right,level distance=55mm,edge from parent path={[->] (\tikzparentnode.east) |- (\tikzchildnode.west)}] {node {$Y_{n+1}\geqslant Y^{(2)}$}}};
\end{tikzpicture}}
\caption{\label{fig:Tree}\rm\small The flow chart for computing $p_{n+1}$, $q_{n+1}$ from $p_n$, $q_n$.}
\end{figure}

We are now ready to determine the limit function of this bundle. By lemma \ref{lemma:symmetry0x11}, we have that $\hat{\Xi}\sim\hat{\Xi}$ via $\mu(x)=\frac{x}{x-1}$ and $f(z)=\frac{1}{1-x}z-\frac{x}{1-x}$. Here $\mu$ is an involution which maps the interval $(1,2)$ to $(2,\infty)$ and the interval $(0,1)$ to $(-\infty,0)$. Taking advantage of the former, after straightforward computations we obtain that
\begin{equation}\label{eq:limitin[1,infty)}
m(x)=1
\end{equation}
and
$$\tau(x)=\begin{cases}
6&\text{if }x\neq 2\\
5&\text{if }x=2
\end{cases}$$
for every $x\in[1,\infty)$.

Since we already know that $m(0)=\frac{1}{2}$ and $\tau(0)=4$, it now remains to consider 
the case $x\in(0,1)$. Notice that the sequence $\left(p_n\right)_{n=5}^\infty$ 
is non-increasing and bounded below by $0$, and therefore it converges, say to $p^+_\infty\in [0,1)$. 
Moreover, every term of the sequence $\left(p_n\right)_{n=6}^\infty$ creates 
jump discontinuities of the transit time as described in lemma \ref{lemma:Type3}. 
Now there are two cases:\medskip

\noindent\textsc{\underline{Case I}}: $p^+_\infty\in(0,1)$. In this case the sequence $\left(p_n\right)_{n=5}^\infty$ either stabilises or converges without stabilising, and so the median sequence either stabilises at or converges without stabilising to
\begin{equation}\label{eq:m}
m(x)=\begin{cases}
\frac{1}{2p^+_\infty}x+\frac{1}{2}&\text{if }0\leqslant x<p^+_\infty\\
1&\text{if }x\geqslant p^+_\infty.
\end{cases}
\end{equation}\smallskip

\noindent\textsc{\underline{Case II}}: $p^+_\infty=0$. In this case the sequence $\left(p_n\right)_{n=5}^\infty$ converges without stabilising and the median sequence converges without stabilising to
\begin{equation}\label{eq:m2}
m(x)=\begin{cases}
0&\text{if }x=0\\
1&\text{if }x>0.
\end{cases}
\end{equation}\medskip

\noindent Applying the self-equivalence to \eqref{eq:m} and \eqref{eq:m2} gives the possible expressions 
of $m(x)$ for $x\in(-\infty,0)$. Combining these with \eqref{eq:limitin[1,infty)} gives the expressions in the lemma, thereby completing the proof.
\end{proof}

Now, a single computer-aided evaluation of \mmm\ orbit shows that, e.g., for $x_0=10^{-4}$, the set 
$\hat{\Xi}\left(x_0\right)$ stabilises at $m\left(x_0\right)=\frac{2597}{5000}$ with transit time 
$\tau\left(x_0\right)=63$. 
Since the point $\left(x_0,m\left(x_0\right)\right)$ is not on the auxiliary function, 
it follows that the first assertion of lemma \ref{lemma:0x11} is true, and hence we have a 
bounded transit time as seen in figure \ref{fig:tau0x11}. 
Moreover, by computing the abscissa of the point at which the line through $o$ and 
$\left(x_0,m\left(x_0\right)\right)$ intersects the auxiliary function, one obtains that 
$p_\infty^+=\frac{1}{388}$. Finally, we have that $p_\infty^-=\mu\left(p_\infty^+\right)=-\frac{1}{387}$, 
implying that the explicit formula of the limit function of this initial bundle is in fact as follows.\medskip

\begin{theorem}\label{thm:0x11}
The strong terminating conjecture holds globally for the system $[0,x,1,1]$. 
Specifically, for every $x\in\mathbb{R}$ we have that
$\tau(x)\leqslant 63$ and 
$$
m(x)=Y_{63}(x)=
\begin{cases}
1&\text{if }x\leqslant-\frac{1}{387}\\
-\frac{387}{2}x+\frac{1}{2}&\text{if }-\frac{1}{387}<x<0\\
194x+\frac{1}{2}&\text{if }0\leqslant x<\frac{1}{388}\\
1&\text{if }x\geqslant\frac{1}{388}.
\end{cases}
$$
\end{theorem}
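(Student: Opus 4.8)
The plan is to let Lemma~\ref{lemma:0x11} do the structural work and then settle the dichotomy it leaves open by a single orbit computation. Recall that Lemma~\ref{lemma:0x11} asserts that one of two alternatives holds: (i) $m$ is continuous at $0$ and, for some $p_\infty\in(0,1)$, is affine on each of $[0,p_\infty)$ and $(\mu(p_\infty),0)$, with common value $\tfrac12$ at $x=0$ and value $1$ elsewhere; or (ii) $m\equiv 1$ off the origin, with $m(0)=\tfrac12$. The first step is therefore to decide which alternative occurs and, under (i), to identify $p_\infty$. For this I would compute the \mmm\ orbit of $\hat{\Xi}$ at one numerical value, say $x_0=10^{-4}$: a direct finite exact-arithmetic computation shows that $\hat{\Xi}(x_0)$ stabilises, with $\tau(x_0)=63$ and $m(x_0)=\tfrac{2597}{5000}$. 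Since $m(x_0)\ne 1$, alternative (ii) is impossible, so (i) holds; and since the point $(x_0,m(x_0))$ does not lie on the auxiliary function $Y\equiv1$, the value $p_\infty$ is recoverable from it.

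The second step is the elementary extraction of $p_\infty$. As $x_0$ lies in the affine piece $[0,p_\infty)$ of the formula in Lemma~\ref{lemma:0x11}(i), we have $m(x_0)=\tfrac{1}{2p_\infty}\,x_0+\tfrac12$, which solves to $p_\infty=\tfrac{1}{388}$; equivalently, $p_\infty$ is the abscissa at which the line through $o=(0,\tfrac12)$ and $(x_0,m(x_0))$, namely $y=194x+\tfrac12$, meets $Y\equiv1$. Then $\mu(p_\infty)=\tfrac{1/388}{1/388-1}=-\tfrac{1}{387}$, and substituting $p_\infty=\tfrac1{388}$ and $\mu(p_\infty)=-\tfrac1{387}$ into the formula of Lemma~\ref{lemma:0x11}(i) produces exactly the four-part expression for $m$ in the statement.

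The third step is the uniform transit-time bound $\tau(x)\le 63$ together with the identification $m=Y_{63}$. On $(0,p_\infty)$ the functions of every iterate keep a fixed combinatorial order (all iterates $Y_n$, $n\ge5$, pass through $o$, and on $(0,p_\infty)$ they remain strictly between the line $y=x$ and the auxiliary function $Y\equiv1$), so the orbit evolves uniformly there; hence the stabilisation step is the same for every $x\in(0,p_\infty)$, namely $63$, with stabilised value the affine function of Lemma~\ref{lemma:0x11}(i). By the global self-equivalence of Lemma~\ref{lemma:symmetry0x11}, $\tau$ is invariant under $\mu$, so $\tau\equiv 63$ on $(\mu(p_\infty),0)$ as well. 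On $[1,\infty)$ the computation inside the proof of Lemma~\ref{lemma:0x11} already gives $\tau\le 6$, and $\tau(0)=4$; on the remaining intervals the non-increasing sequence $(p_n)_{n\ge5}$ passes through each point only finitely often before reaching $p_\infty$, each passage altering the transit time by exactly $2$ via Lemma~\ref{lemma:Type3}, so there $\tau$ does not exceed $63$. Combining all cases, and using that after stabilisation the iterate equals the limit, we conclude $\tau(x)\le 63$ and $m(x)=Y_{63}(x)$ for every $x\in\mathbb{R}$.

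The main obstacle, I expect, is precisely this last step: one must rigorously exclude the ``converges without stabilising'' branch of Lemma~\ref{lemma:0x11}(i) --- which would make $\tau$ unbounded near $0$ --- and then promote a single numerical evaluation to a statement about all real $x$. Concretely, this requires verifying (a) that the combinatorial type of the orbit is genuinely constant on $(0,p_\infty)$ at every time step, so that the orbit at $x_0$ faithfully represents the whole interval and in particular $(p_n)$ actually stabilises at $p_\infty$; and (b) that the finitely many jump loci $p_6>p_7>\cdots$ of Lemma~\ref{lemma:Type3} all lie in $[p_\infty,1)$ and are few enough that the accumulated increments never push $\tau$ above the value $63$ realised at $x_0$.
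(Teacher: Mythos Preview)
Your proposal is essentially the paper's own argument: reduce to the dichotomy of Lemma~\ref{lemma:0x11}, decide it by evaluating the orbit at the single test point $x_0=10^{-4}$ (the paper uses this very value), read off $p_\infty=\tfrac{1}{388}$ from the line through $o$ and $(x_0,m(x_0))$, and transport to the left via the involution $\mu$ of Lemma~\ref{lemma:symmetry0x11}. The paper then simply asserts bounded transit time by pointing to the staircase structure (its figure~\ref{fig:tau0x11}), which is exactly the content of your third step.

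Two small remarks. First, your parenthetical justification that on $(0,p_\infty)$ the iterates ``remain strictly between the line $y=x$ and the auxiliary function $Y\equiv 1$'' is not accurate --- the iterates $Y_n$ do overshoot $1$ (see the figures in the proof of Lemma~\ref{lemma:0x11}); what is true, and what you actually need, is that the \emph{combinatorial type} (equivalently, the flow-chart branch of figure~\ref{fig:Tree}) is constant on $(0,p_\infty)$, which follows because $p_\infty\le p_n$ for all $n$ by definition of $p_n$ as a regularity boundary. Second, your worry (a) about the ``converges without stabilising'' sub-branch is resolved directly by the finiteness of $\tau(x_0)$: since $x_0<p_\infty\le p_n$ for every $n$, the orbit at $x_0$ follows the flow chart at every step, so its stabilisation at step $63$ forces the flow chart itself to hit a stabilisation box there, i.e.\ $(p_n)$ actually stabilises. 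With that in hand, the staircase for $\tau$ has finitely many steps of height $2$ descending from $63$ on $(0,p_\infty)$ down to $7$ near $x=\tfrac{1}{3}$, and your bound $\tau\le 63$ follows.
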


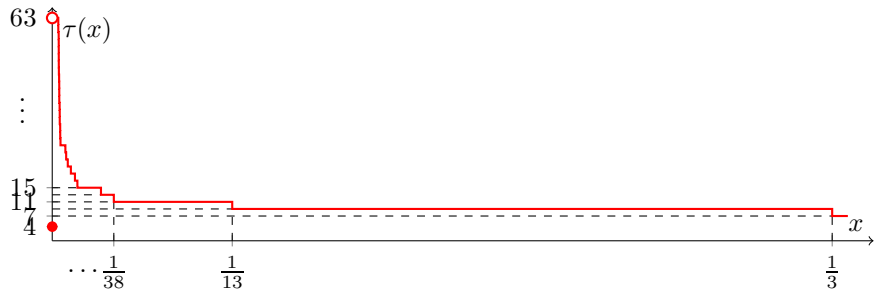
\begin{figure}
\centering
\begin{tikzpicture}
\begin{axis}[
	xmin=0,
	xmax=0.3509677419,
	ymin=0,
	ymax=66,
    xtick={0.2632e-1,0.7692e-1,0.3333},
    ytick={4,7,11,15,63},
    xticklabels={$\frac{1}{38}$,$\frac{1}{13}$,$\frac{1}{3}$},
	axis lines=middle,
    x axis line style=->,
    y axis line style=->,
	xlabel=$x$,
	ylabel=$\tau(x)$,
    width=12.5/16*16cm,
    height=12.5/16*6cm,
    clip=false
]
\node[left,xshift=-6pt] at (axis cs:0,39) {$\vdots$};
\node[below,yshift=-7pt] at (axis cs:0.01315789474,0) {$\ldots$};
\draw[dashed] (axis cs:0,7)-- (axis cs:0.3333,7);
\draw[dashed] (axis cs:0.3333,0)-- (axis cs:0.3333,7);
\draw[dashed] (axis cs:0,9)-- (axis cs:0.7692e-1,9);
\draw[dashed] (axis cs:0.7692e-1,0)-- (axis cs:0.7692e-1,9);
\draw[dashed] (axis cs:0,11)-- (axis cs:0.2632e-1,11);
\draw[dashed] (axis cs:0.2632e-1,0)-- (axis cs:0.2632e-1,11);
\draw[dashed] (axis cs:0,13)-- (axis cs:0.2083e-1,13);
\draw[dashed] (axis cs:0,15)-- (axis cs:0.1075e-1,15);
\draw[thick,red] plot coordinates  { (axis cs:-0.2592e-2, 63) (axis cs:0.2586e-2, 63) (axis cs:0.2586e-2, 61) (axis cs:0.2594e-2, 61) (axis cs:0.2594e-2, 59) (axis cs:0.2755e-2, 59) (axis cs:0.2755e-2, 57) (axis cs:0.2779e-2, 57) (axis cs:0.2779e-2, 55) (axis cs:0.2788e-2, 55) (axis cs:0.2788e-2, 53) (axis cs:0.2798e-2, 53) (axis cs:0.2798e-2, 51) (axis cs:0.2808e-2, 51) (axis cs:0.2808e-2, 49) (axis cs:0.2843e-2, 49) (axis cs:0.2843e-2, 47) (axis cs:0.2937e-2, 47) (axis cs:0.2937e-2, 45) (axis cs:0.2992e-2, 45) (axis cs:0.2992e-2, 43) (axis cs:0.3003e-2, 43) (axis cs:0.3003e-2, 41) (axis cs:0.3014e-2, 41) (axis cs:0.3014e-2, 39) (axis cs:0.3157e-2, 39) (axis cs:0.3157e-2, 37) (axis cs:0.3201e-2, 37) (axis cs:0.3201e-2, 35) (axis cs:0.3214e-2, 35) (axis cs:0.3214e-2, 33) (axis cs:0.3398e-2, 33) (axis cs:0.3398e-2, 31) (axis cs:0.3413e-2, 31) (axis cs:0.3413e-2, 29) (axis cs:0.3534e-2, 29) (axis cs:0.3534e-2, 27) (axis cs:0.5618e-2, 27) (axis cs:0.5618e-2, 25) (axis cs:0.5952e-2, 25) (axis cs:0.5952e-2, 23) (axis cs:0.6645e-2, 23) (axis cs:0.6645e-2, 21) (axis cs:0.7968e-2, 21) (axis cs:0.7968e-2, 19) (axis cs:0.9709e-2, 19) (axis cs:0.9709e-2, 17) (axis cs:0.1075e-1, 17) (axis cs:0.1075e-1, 15) (axis cs:0.2083e-1, 15) (axis cs:0.2083e-1, 13) (axis cs:0.2632e-1, 13) (axis cs:0.2632e-1, 11) (axis cs:0.7692e-1, 11) (axis cs:0.7692e-1, 9) (axis cs:.3333, 9) (axis cs:.3333, 7) (axis cs:0.34,7)};
\draw[thick,color=red,fill=white] (axis cs:0,63) circle (2pt);
\fill[red] (axis cs:0,4) circle (2pt);
\end{axis}
\end{tikzpicture} 
\caption{\label{fig:tau0x11}\small\rm The transit time of $\hat{\Xi}$ for $x>0$.}
\end{figure}
 
\section{Dynamics near active X-points}\label{section:X-points}

We now turn to the analysis of the \mmm\ dynamics near X-points. First, we develop suitable conditions under which the symmetry near a proper active X-point is inherited by the limit function (lemmas \ref{lemma:affinecombinations} and \ref{lemma:independencefromhistory}, theorem \ref{theorem:inheritance}). Then we use our knowledge of the system $[0,x,1,1]$ to describe the limit function near any active X-point, dividing the analysis into two cases according to the rank of the X-point. Since the X-point $0$ in the system $[0,x,1,1]$ is of rank $2$, we first deal with general X-points of rank $2$ or higher (theorem \ref{thm:Generalrk2}). Then we extend the result to X-points of rank $1$ (theorems \ref{thm:AuxSeq} and \ref{thm:Generalrk1}). Finally, we justify the assumptions of our theorems by illustrating various pathologies.

In section \ref{section:Symmetries}, any function not through an X-point could be its auxiliary function. In this section, given an active X-point, we need to choose a specific auxiliary function which will be relevant to the future dynamics near the X-point. For this purpose, we define the \textit{standard auxiliary function} and the \textit{standard triad} of an active X-point $p=Y_i\bowtie Y_j$ ---with a non-decreasing median sequence in its vicinity--- to be $\min\mathcal{Y}_p$ and $\left[Y_i,Y_j;\min\mathcal{Y}_p\right]$, respectively, where $\mathcal{Y}_p$ is defined by \eqref{eq:SetOfFunctionsImmediatelyAbove}. Notice that the standard triad depends on $Y_i$ and $Y_j$, whereas the standard auxiliary function does not; this is important if $p$ is not proper.

\subsection{Inheritance of symmetries and independence from previous history}\label{section:Inheritance}

In the bundle $\hat{\Xi}$, the standard triad of the proper active X-point $0$ comprises the entire bundle (ignoring multiplicities), so it is obvious that the triad symmetry given by lemma \ref{lemma:symmetry0x11} is inherited by the orbit of the bundle, leading to the functional equation \eqref{eq:FunctionalEquation}. In general, however, we work with a triad which need not be the whole bundle, so whether the symmetry is inherited is not immediately apparent.

More precisely, consider a neighbourhood of an X-point $p$ of a triad $\Omega=\left[Y_i,Y_j;Y\right]$ whose symmetry induces a self-equivalence via the pair $(\mu,f)$. Our goal is to establish a sufficient condition under which \eqref{eq:FunctionalEquation} holds near $p$ for the same pair $(\mu,f)$. For this purpose, we will prove two lemmas. The first lemma characterises functions $W$ which satisfy
\begin{equation}\label{eq:W}
W(\mu(x))=f\left(W(x)\right)
\end{equation}
for the same pair $(\mu,f)$. It turns out that these are precisely the \textit{affine combinations}\footnote{Linear combinations with coefficients adding up to unity \cite[page 428]{Roman}.} of functions in $\Omega^\updownarrow:=\left[Y_i\lor Y_j,Y_i\land Y_j,Y\right]$.

\begin{lemma}\label{lemma:affinecombinations}
An affine function $W$ which is regular except, possibly, at $p$, satisfies \eqref{eq:W} if and only if $W$ is an affine combination of functions in $\Omega^\updownarrow$.
\end{lemma}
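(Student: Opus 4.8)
The plan is to prove both implications by exploiting the structure of the self-equivalence $(\mu,f)$ determined by the triad. Recall from Theorem~\ref{thm:GeneralSymmetry} that the three functions $I,J,K$ (the right-hand branches of $U=Y_i\lor Y_j$, $L=Y_i\land Y_j$, $Y$) satisfy $I'(\mu(x))=f(I(x))$, $J'(\mu(x))=f(J(x))$, $K'(\mu(x))=f(K(x))$, where primed quantities are the left-hand branches of the same concatenations. In other words, each of $Y_i\lor Y_j$, $Y_i\land Y_j$, $Y$ satisfies \eqref{eq:W}. Since $f$ is affine in $z$ (with coefficients depending on $x$ only, not on $z$), the set of functions satisfying \eqref{eq:W} is closed under affine combinations: if $W=\sum_k c_k W_k$ with $\sum_k c_k=1$ and each $W_k$ satisfies \eqref{eq:W}, then $W(\mu(x))=\sum_k c_k W_k(\mu(x))=\sum_k c_k f(W_k(x))$, and because $f(z)=A(x)z+B(x)$ one computes $\sum_k c_k f(W_k(x))=A(x)\sum_k c_k W_k(x)+B(x)\sum_k c_k=A(x)W(x)+B(x)=f(W(x))$. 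This gives the ``if'' direction immediately.

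For the ``only if'' direction, suppose $W$ is affine and regular near $p$ (possibly with a corner at $p$) and satisfies \eqref{eq:W}. On the right of $p$, $W$ is a genuine affine function; I would first argue that it can be written as an affine combination $W = aI + bJ + cK$ with $a+b+c=1$. This is just linear algebra: the three lines $I,J,K$ are in ``general position'' relative to the pencil through $o_\infty$ — $I$ and $J$ meet at $p$ and $K$ misses $p$ — so $\{I,J,K\}$, together with the constraint of being graphs of affine maps, spans the two-dimensional affine space of affine functions, and the affine-combination constraint $a+b+c=1$ is exactly the condition that fixes the representation to be affine rather than merely linear. (Concretely: write $W(x)=\alpha x+\beta$; solving $aI+bJ+cK=W$ with $a+b+c=1$ is a $3\times3$ linear system whose matrix is nonsingular precisely because $I,J,K$ are not concurrent and not all parallel, which holds by \eqref{eq:GeneralTriad}.) Then define $\widetilde W := a(Y_i\lor Y_j)+b(Y_i\land Y_j)+cY$, a global affine combination of elements of $\Omega^\updownarrow$; by the ``if'' direction $\widetilde W$ satisfies \eqref{eq:W}, and by construction $\widetilde W=W$ for $x\geqslant p$. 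It remains to show $\widetilde W=W$ for $x<p$ as well, i.e.\ that $W$ is forced to agree with $\widetilde W$ on the left. For $x<p$ apply \eqref{eq:W} to both $W$ and $\widetilde W$: since $\mu$ maps a left-neighbourhood of $p$ to a right-neighbourhood of $p$ (as $\mu$ fixes $p$ and is monotone, being a Möbius map with $\mu(p)=p$), for $x<p$ we have $\mu(x)>p$, hence $W(\mu(x))=\widetilde W(\mu(x))$; applying $f^{-1}$ (which exists since $A(x)\neq 0$ — the numerator $K'(\mu(x))-I'(\mu(x))$ is nonzero as $I',K'$ are distinct lines) gives $W(x)=\widetilde W(x)$. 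Thus $W=\widetilde W$ everywhere near $p$, so $W$ is an affine combination of functions in $\Omega^\updownarrow$.

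The main obstacle I anticipate is making the linear-algebra step fully rigorous in the degenerate-looking cases: one must confirm that $I,J,K$ really do form a nondegenerate system (not concurrent, not all parallel), which is guaranteed by the hypotheses on the triad — $p=Y_i\bowtie Y_j$ is a \emph{transversal} intersection so $I\neq J$ as lines, the auxiliary function $Y$ does not pass through $p$ so $K$ is not concurrent with $I,J$ at $p$, and condition~\eqref{eq:GeneralTriad} records that on the affine plane $J<I<K$ to the right of $p$, which in particular rules out coincidences. A secondary subtlety is the behaviour at the corner point $p$ itself: since $W$, $\widetilde W$, $\mu$, $f$ are all continuous and we have established agreement on a punctured neighbourhood, agreement at $p$ follows by continuity, so no separate argument is needed there. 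One should also note that $W$ having a corner at $p$ is automatically consistent: $\widetilde W$ generically has a corner at $p$ too, inherited from the corners of $Y_i\lor Y_j$ and $Y_i\land Y_j$, and the identity $W=\widetilde W$ then pins down the left-slope of $W$ in terms of $a,b,c$ and the left-branches $I',J',K'$.
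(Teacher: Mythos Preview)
Your proposal is correct and follows essentially the same strategy as the paper: the ``if'' direction is identical, and for ``only if'' both you and the paper reduce to showing that $Y-U$ and $Y-L$ (equivalently $I,J,K$) are affinely independent so that $W$ can be expressed as an affine combination on one side, then use the self-equivalence to transport the identity to the other side. The only cosmetic difference is packaging: the paper phrases the transport step as redundancy of a two-equation system, whereas you construct $\widetilde W$ explicitly and invoke invertibility of $f$ together with the side-swapping property of $\mu$; your justification that $\mu$ swaps sides should cite that $\mu$ is \emph{orientation-reversing} at $p$ (from the construction $\lambda$ sends right branches to left branches), not merely that it is monotone and fixes $p$.
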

\begin{proof}
Write $U:=Y_i\lor Y_j$, $L:=Y_i\land Y_j$, and $f(z)=A(x)z+B(x)$ as in the proof of theorem \ref{thm:GeneralSymmetry}. 
First suppose $W=\alpha U+\beta L +\gamma Y$, where $\alpha+\beta+\gamma=1$. Then by theorem \ref{thm:GeneralSymmetry} we have
\begin{eqnarray*}
W(\mu(x))&=&\alpha U(\mu(x)) +\beta L(\mu(x)) + \gamma Y(\mu(x))\\
         &=& \alpha f(U(x))+\beta f(L(x)) + \gamma f(Y(x))\\
         &=&\alpha[A(x)U(x)+B(x)] +\beta[A(x)L(x)+B(x)] +\gamma[A(x)Y(x)+B(x)]\\
         &=&A(x)[\alpha U(x)+\beta L(x)+\gamma Y(x)] +B(x)(\alpha+\beta+\gamma)\\
         &=&A(x)W(x)+B(x)\\
         &=&f(W(x)).
\end{eqnarray*}
Conversely, suppose that $W$ is a piecewise-affine function with
\begin{equation}\label{eq:equivalenceW}
W(\mu(x))=f(W(x))=A(x)W(x)+B(x)
\end{equation}
for all $x$ sufficiently close to $p$. We shall prove that $W$ is an affine combination of $U$, $L$, and $Y$, i.e., that there exists $(\alpha,\beta)\in\mathbb{R}^2$ such that the functional identity
$$W=\alpha U+\beta L+(1-\alpha-\beta)Y,$$
holds in a neighbourhood of $p$. This identity can be written as
$$
\left\{\begin{array}{rcl}
W(x)&=&\alpha U(x)+\beta L(x)+(1-\alpha-\beta) Y(x)\\
W(\mu(x))&=&\alpha U(\mu(x))+\beta L(\mu(x))+(1-\alpha-\beta) Y(\mu(x)),
\end{array}\right.
$$
i.e.,
\begin{equation}\label{1}
\left\{\begin{array}{rcl}
Y(x)-W(x)&=&[Y(x)-U(x)]\alpha+[Y(x)-L(x)]\beta\\
Y(\mu(x))-W(\mu(x))&=&[Y(\mu(x))-U(\mu(x))]\alpha\\&&+\,\,\,[Y(\mu(x))-L(\mu(x))]\beta.
\end{array}\right.
\end{equation}
But since near $p$ we have
\begin{eqnarray*}
U(\mu(x))&=&A(x)U(x)+B(x),\\
L(\mu(x))&=&A(x)L(x)+B(x),\\
Y(\mu(x))&=&A(x)Y(x)+B(x),
\end{eqnarray*}
which, together with \eqref{eq:equivalenceW}, imply
\begin{eqnarray*}
Y(\mu(x))-W(\mu(x))&=&A(x)[Y(x)-W(x)],\\
Y(\mu(x))-U(\mu(x))&=&A(x)[Y(x)-U(x)],\\
Y(\mu(x))-L(\mu(x))&=&A(x)[Y(x)-L(x)],
\end{eqnarray*}
then the second equation in \eqref{1} is redundant (it is a multiple of the first equation), so it remains to show that the first equation has a solution $(\alpha,\beta)\in\mathbb{R}^2$. For this purpose, it suffices to show that $Y-U$ is not a multiple of $Y-L$. Suppose for a contradiction that there exists $k\in\mathbb{Q}$ such that $Y-U=k(Y-L)$. If $k=1$, then $U=L$, which is a contradiction. Otherwise, we have
$$Y=\frac{1}{1-k}U-\frac{k}{1-k}L,$$
which means that the auxiliary function is an affine combination of the upper and lower concatenations of the X-point functions. 
This is also a contradiction because the auxiliary function does not pass through the X-point.
\end{proof}

Now suppose the X-point $p=Y_i\bowtie Y_j$ is proper and active (the median sequence being non-decreasing in its vicinity), and the triad $\Omega$ is standard. We prove that, starting at the time step at which $p$ stabilises, every function generated by the \mmm\ is an affine combination of functions in $\Omega^\updownarrow$, provided that the first one is. In this sense, the functional dynamics near $p$ now depends only on functions in $\Omega^\updownarrow$, rather than on all functions in its previous history.

\begin{lemma}\label{lemma:independencefromhistory}
Let $p$ be a proper active X-point with standard triad $\Omega$. If $Y_{\tau(p)}$ is an affine combination of functions in $\Omega^\updownarrow$, then so is the function $Y_n$ for every $n\geqslant\tau(p)$.
\end{lemma}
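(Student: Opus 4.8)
The plan is to argue by induction on $n \geqslant \tau(p)$, the base case $n = \tau(p)$ being the hypothesis. The key point is that, because $p$ is a \emph{proper active} X-point and the triad $\Omega$ is \emph{standard}, the combinatorics of the median sequence in a small neighbourhood of $p$ is governed entirely by the three functions $U := Y_i \lor Y_j$, $L := Y_i \land Y_j$, and $Y := \min\mathcal{Y}_p$, together with the functions generated after time $\tau(p)$ — exactly as happens in the model bundle $\hat\Xi = [0,x,1,1]$, whose dynamics was dissected in the proof of Lemma \ref{lemma:0x11}. Concretely: for $n \geqslant \tau(p)$, the core $\lambda_n$ near $p$ consists of functions drawn from $\{L, Y_{\tau(p)}, Y_{\tau(p)+1}, \ldots, Y_n\}$ lying below $\min\mathcal{Y}_p$, and formula \eqref{eq:pardeprecursion} expresses $Y_{n+1}$ as an affine combination — indeed with coefficients summing to $1$, as one checks directly from the two cases of \eqref{eq:pardeprecursion} — of $x_{n-1}$ (i.e. $Y_{n-1}(x)$) and the core elements $x_i, x_j$ (and $x_k$ in the odd case).

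The first step is therefore to record the structural fact that, near a proper active X-point with non-decreasing median sequence, for all $n \geqslant \tau(p)$ the functions appearing in the core $\lambda_n$ and the median expressions $\mathcal{M}_n, \mathcal{M}_{n-1}, \mathcal{M}_{n-2}$ belong to the list $L, Y_{\tau(p)}, \ldots, Y_n$; properness guarantees $Y_i, Y_j$ have multiplicity $1$ and that no other pair of bundle functions meets transversally at $(p, Y_i(p))$, so that the auxiliary function $\min\mathcal{Y}_p$ bounds the relevant region from above and no stray function from the earlier history can intrude into the core. The second step is the induction itself: assuming $Y_k$ is an affine combination of $U, L, Y$ for all $\tau(p) \leqslant k \leqslant n$, apply \eqref{eq:pardeprecursion}; since affine combinations of affine combinations of $U, L, Y$ are again affine combinations of $U, L, Y$ (the coefficients of the inner combinations sum to $1$, and \eqref{eq:pardeprecursion} combines them with coefficients that also sum to $1$), we conclude $Y_{n+1}$ is such an affine combination. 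Note $L$ itself equals $1\cdot L$, a trivial affine combination, so including a core element $x_i = L(x)$ causes no difficulty. The third step is to handle singularities: since $U, L, Y$ may each be singular at $p$, the identity $Y_{n+1} = \alpha U + \beta L + (1-\alpha-\beta)Y$ is to be read as holding on each side of $p$ separately, which is exactly the sense in which Lemma \ref{lemma:affinecombinations} and \eqref{eq:W} are formulated, so no extra care beyond bookkeeping is needed.

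The main obstacle is the first step — verifying that the core never picks up a function outside $\{L\} \cup \{Y_k : \tau(p) \leqslant k \leqslant n\}$. This is where the hypotheses \emph{proper} and \emph{active} (and the choice of the \emph{standard} triad) do all the work: properness rules out coincidental transversal intersections and multiplicities $>1$ among $Y_i, Y_j$; activeness ($\mathcal{Y}_p \neq \varnothing$) ensures $\min\mathcal{Y}_p$ exists and, being the lowest function strictly above $m(p)$ at $p$, caps the region in which the orbit evolves, so the iterates $Y_{\tau(p)}, Y_{\tau(p)+1},\ldots$ stay between $L$ and $Y$ near $p$ until stabilisation, precisely mirroring the behaviour in $\hat\Xi$ (where $U, L$ were the two copies of $1$ resp. the constant $0$ — wait, rather $U = L = 1$ is degenerate, so one uses the genuine three-function picture; in the general case $U \neq L$). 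Once this structural claim is in place, the remainder is the routine induction via \eqref{eq:pardeprecursion} sketched above.
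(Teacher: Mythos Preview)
Your approach is essentially the paper's: strong induction on $n\geqslant\tau(p)$, with the crux being that the medians $\mathcal{M}_{n-1},\mathcal{M}_n$ near $p$ are built only from functions already known to be affine combinations of $\Omega^\updownarrow$. Two remarks are worth making.

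First, the paper runs the induction through \eqref{eq:mmm2} rather than \eqref{eq:pardeprecursion}: since $Y_{n+1}=(n+1)\mathcal{M}_n-n\mathcal{M}_{n-1}$ has coefficients summing to $1$, and each median is either an element of $[Y_{\tau(p)},\ldots,Y_n]\uplus\Omega^\updownarrow$ or the arithmetic mean of two such elements, the inductive step is immediate. Your route via the core formula works too, but is slightly heavier.

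Second, your structural claim is stated a little too narrowly: you assert the core lies in $\{L\}\cup\{Y_k:\tau(p)\leqslant k\leqslant n\}$, but in fact $U$ (and in principle $Y$) can and do appear---for instance $\mathcal{M}_{\tau(p)-1}=\langle U,L\rangle$ when $\tau(p)-1$ is even. The paper accordingly allows the full set $\Omega^\updownarrow=[U,L,Y]$ in its list. This does not damage your argument, since $U,Y\in\Omega^\updownarrow$ are trivially affine combinations of themselves, but the list should be amended. The paper, incidentally, is content simply to \emph{assert} that the medians are drawn from $[Y_{\tau(p)},\ldots,Y_n]\uplus\Omega^\updownarrow$, leaving the justification (which you rightly flag as the real content) to the definitions of \emph{proper}, \emph{active}, and \emph{standard}; your more explicit discussion of why no earlier function can enter the core is a welcome addition, though the parenthetical confusion about the model case $\hat\Xi$ should be cleaned up.
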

\begin{proof}
Let $p$ be a proper active X-point with standard triad $\Omega$. Assume that $Y_{\tau(p)}$ is an affine combination of functions in $\Omega^\updownarrow$. We use strong induction to prove that for every $n\geqslant\tau(p)$, the function $Y_n$ is also an affine combination of functions in $\Omega^\updownarrow$.

The base case is a part of the assumption. Now let $n\geqslant \tau(p)$ be such that every function in the set $\left[Y_{\tau(p)},\ldots,Y_n\right]$ is an affine combination of functions in $\Omega^\updownarrow$. Then, from \eqref{eq:mmm2} we find that $Y_{n+1}=(n+1)\mathcal{M}_{n}-n\mathcal{M}_{n-1}$ is an affine combination of $\mathcal{M}_{n}$ and $\mathcal{M}_{n-1}$, each of which is either a function in the set $\left[Y_{\tau(p)},\ldots,Y_n\right]\uplus \Omega^\updownarrow$ or the arithmetic mean of two such functions, and hence is an affine combination of functions in $\Omega^\updownarrow$. This implies that $Y_{n+1}$ is an affine combination of functions in $\Omega^\updownarrow$, as easily verified. Therefore, the induction is complete.
\end{proof}

\begin{algorithm}\label{remark:independencefromhistory}
Notice that lemma \ref{lemma:independencefromhistory} remains true if $\Omega^\updownarrow$ is replaced by $\Omega^\updownarrow\backslash[Y]$, where $Y$ is the standard auxiliary function of $p$, in which case $m(p)=Y_i(p)=Y_j(p)$. We will use this fact to prove proposition \ref{prop:equivalenceNF} in section \ref{section:ReducedSystem}.
\end{algorithm}

From the above two lemmas, it is clear that if $p$ is proper and active, and $Y_{\tau(p)}$ is an affine combination of functions in $\Omega^\updownarrow$, where $\Omega$ is the standard triad of $p$, then all functions generated after its stabilisation satisfy the desired equivalence. Therefore we have achieved the goal of this section.

\begin{theorem}\label{theorem:inheritance}
Let $p$ be a proper active X-point with standard triad $\Omega$. If $Y_{\tau(p)}$ is an affine combination of functions in $\Omega^\updownarrow$, then
\begin{equation}\label{eq:AffineCombinationY}
Y_n(\mu(x))=f\left(Y_n(x)\right)
\end{equation}
for every $n\geqslant\tau(p)$, and so \eqref{eq:FunctionalEquation} holds, and
\begin{equation}\label{eq:AffineCombinationTau}
\tau(\mu(x))=\tau(x),
\end{equation}
meaning that the functional orbit on the left-hand side stabilises if and only if that on the right-hand side stabilises.
\end{theorem}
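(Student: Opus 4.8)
The plan is to combine the two preceding lemmas, which do essentially all of the work, and then unwind the definitions of $m$ and $\tau$ pointwise. First I would invoke Lemma~\ref{lemma:independencefromhistory}: since $p$ is proper and active and $Y_{\tau(p)}$ is an affine combination of functions in $\Omega^\updownarrow$, every iterate $Y_n$ with $n\geqslant\tau(p)$ is again such an affine combination (valid for all $x$ in a neighbourhood of $p$, which is all we claim). Then Lemma~\ref{lemma:affinecombinations} applies to each such $Y_n$ and yields \eqref{eq:AffineCombinationY}, i.e. $Y_n(\mu(x))=f(Y_n(x))$ for every $n\geqslant\tau(p)$ and every $x$ close to $p$. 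One minor point to address is that Lemma~\ref{lemma:affinecombinations} is stated for a function ``regular except, possibly, at $p$''; here $Y_n$ is piecewise-affine, so I would note that near $p$ it has at most the one singularity at $p$ (the bundle having finitely many pieces on any interval, and the X-point being the only source of non-differentiability relevant to the triad), so the hypothesis is met, or alternatively apply the lemma separately on each of the two one-sided half-neighbourhoods of $p$ where $Y_n$ is genuinely affine.

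Next I would deduce \eqref{eq:FunctionalEquation}. By \eqref{eq:mmm2} the median sequence $(\mathcal{M}_n)$ is locally monotonic, and since $p$ is active and monotonic the limit $m(x)=\lim_{n\to\infty}\mathcal{M}_n(x)$ exists for $x$ near $p$; moreover each $\mathcal{M}_n$ is, near $p$, either one of the iterates $Y_k$ or the arithmetic mean of two of them, hence (for $n$ large enough that the relevant indices exceed $\tau(p)$) an affine combination of functions in $\Omega^\updownarrow$, so $\mathcal{M}_n(\mu(x))=f(\mathcal{M}_n(x))$ by the same two lemmas. Taking $n\to\infty$ and using that $f$, for each fixed $x$, is an affine (hence continuous) function of $z$, we get $m(\mu(x))=f(m(x))$, which is \eqref{eq:FunctionalEquation}. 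For the stabilisation statement \eqref{eq:AffineCombinationTau}: from \eqref{eq:mmm2} the functional orbit stabilises at $x$ if and only if the median sequence stabilises at $x$, and $\mathcal{M}_n(\mu(x))=f(\mathcal{M}_n(x))$ together with injectivity of $z\mapsto f(z)$ (the leading coefficient $A(x)=\tfrac{K'(\mu(x))-I'(\mu(x))}{K(x)-I(x)}$ is nonzero since $K\neq I$ and $K'\neq I'$ away from $p$) shows $\mathcal{M}_{n+1}(\mu(x))=\mathcal{M}_n(\mu(x))$ iff $\mathcal{M}_{n+1}(x)=\mathcal{M}_n(x)$; hence the two orbits stabilise simultaneously, and since $\mu$ is a bijection near $p$ taking $p$ to $p$, the minimal stabilisation times match, giving $\tau(\mu(x))=\tau(x)$ (with the convention $\infty=\infty$ when neither stabilises).

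The main obstacle, such as it is, is bookkeeping rather than mathematics: one must be careful that the affine-combination property is only claimed to hold \emph{locally} near $p$ and only for indices $n\geqslant\tau(p)$, and that when writing $\mathcal{M}_n$ as a mean of two iterates those iterates have indices exceeding $\tau(p)$ as well (true for $n$ sufficiently large). One should also check that $\mu$ maps a small enough neighbourhood of $p$ into the region where Lemmas~\ref{lemma:affinecombinations} and~\ref{lemma:independencefromhistory} were established, which follows from $\mu$ being a Möbius transformation fixing $p$ and hence continuous there. Once these are noted, the proof is a short two-step citation of the lemmas plus a passage to the limit, and I would write it in that order: (1) Lemma~\ref{lemma:independencefromhistory} $\Rightarrow$ every $Y_n$, $n\geqslant\tau(p)$, is an affine combination; (2) Lemma~\ref{lemma:affinecombinations} $\Rightarrow$ \eqref{eq:AffineCombinationY}; (3) pass to the limit over $\mathcal{M}_n$ to get \eqref{eq:FunctionalEquation}; (4) use injectivity of $f(\cdot)$ and bijectivity of $\mu$ to get \eqref{eq:AffineCombinationTau}.
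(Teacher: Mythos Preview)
Your proposal is correct and follows exactly the approach the paper takes: the paper's proof is in fact just the sentence ``From the above two lemmas, it is clear that \ldots all functions generated after its stabilisation satisfy the desired equivalence,'' so your write-up (Lemma~\ref{lemma:independencefromhistory} $\Rightarrow$ each $Y_n$ is an affine combination, then Lemma~\ref{lemma:affinecombinations} $\Rightarrow$ \eqref{eq:AffineCombinationY}, then pass to the limit and use injectivity of $f$) supplies considerably more detail than the paper itself. The only small refinement is that the median $\mathcal{M}_n$ may involve the triad functions $Y_i,Y_j,Y$ themselves rather than only iterates with index $\geqslant\tau(p)$, but these are trivially affine combinations of elements of $\Omega^\updownarrow$, so your argument goes through unchanged.
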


We now understand why the structures of the limit function of the system $[0,x,1]$ near every rational number in $\left[\frac{1}{2},\frac{2}{3}\right]$ with denominator between $3$ and $18$ inclusive, as found in \cite[theorem 1.3]{CellarosiMunday}, are symmetric. As mentioned in section \ref{section:Preliminaries}, these numbers are all X-points (figure \ref{fig:proportionXpts}) which are proper and active. Moreover, they possess local symmetries described by theorem \ref{thm:GeneralSymmetry} ---as well as corollary \ref{cor:RegularX-point} since they are regular--- which are inherited to become local symmetries of the limit function.

\subsection{The limit function near an X-point of high rank}\label{section:Tractability}

In this subsection we generalise the dynamics near the X-point $0$ of rank $2$ in the bundle $\hat{\Xi}(x)=[0,x,1,1]$ which was discussed in the proof of lemma \ref{lemma:0x11}. More precisely, we will prove a general version of lemma \ref{lemma:0x11} which holds for active X-points of \textit{high rank}, i.e., rank at least $2$. To achieve this, we first need to identify a sufficient condition under which the dynamics near such an X-point is the exactly the same as the one described by figure \ref{fig:Tree}, without any unwanted interaction with earlier functions, i.e., those appearing before the X-point stabilises. We will work only on the right-hand side of the X-point (therefore by \textit{rank} we mean \textit{right-rank}), where the median sequence is assumed to be non-decreasing, and define some terms which can be defined analogously on the left-hand side. In future discussions, the prefixes \textit{right}- or \textit{left}- may be added to each of these terms to avoid ambiguity. An active X-point $p$ is said to be:
\begin{itemize}
\item \textit{tractable} if there exists an odd integer $\ell\geqslant \tau(p)$ such that the following three conditions are satisfied:
\begin{enumerate}
\item [\textbf{T1}] the median $\mathcal{M}_\ell$ meets the standard auxiliary function $Y$ on the right-hand side of the X-point, say at $p_*\in(p,\infty)$, and both $\mathcal{M}_\ell$ and $Y$ are regular in the interval $\mathcal{T}:=(p,p_\ast)$;
\item [\textbf{T2}] the interval $\mathcal{T}$ contains no point $Y\bowtie \overline{Y}$, where $\overline{Y}\in \Xi_{\tau(p)-1}$;
\item [\textbf{T3}] for every $n\in\{\tau(p),\ldots,\ell\}$, the function $Y_n$ has no corner below $Y$ in $\mathcal{T}$.
\end{enumerate}
The smallest such number $\ell$ and the corresponding interval $\mathcal{T}$ are referred to as the \textit{tractability index} and \textit{tractability domain} of the X-point, respectively.
\item \textit{dichotomic} in a neighbourhood\footnote{This neighbourhood may also be open, in which case the definition is modified by merely excluding $p$.} $\left[p,p_\ast\right)$ if the limit function $m:\left[p,p_\ast\right)\to\mathbb{R}$ is known to have exactly one of the following properties:
\begin{enumerate}
\item[\textrm{i)}] $m$ is continuous at $x=p$, and in $\left[p,p_\ast\right)$ we have
\begin{equation}\label{eq:Generalm1}
m(x)=\begin{cases}
\frac{Y\left(p_\infty\right)-m(p)}{p_\infty-p}x+\frac{p_\infty m(p)-pY\left(p_\infty\right)}{p_\infty-p}
&\text{if }p\leqslant x<p_\infty\\
Y(x)&\text{if }p_\infty\leqslant x<p_\ast,
\end{cases}
\end{equation}
for some $p_\infty\in\left(p,p_\ast\right]$.
\item[\textrm{ii)}] $m$ is discontinuous at $x=p$, and in $\left[p,p_\ast\right)$ we have
\begin{equation}\label{eq:Generalm2}
m(x)=\begin{cases}
m(p)&\text{if }x=p\\
Y(x)&\text{if }p<x<p_\ast.
\end{cases}
\end{equation}
\end{enumerate}
\end{itemize}
We shall see that the tractability conditions \textbf{T1}, \textbf{T2}, and \textbf{T3} suffice to guarantee that a rescaled version of the dynamics described by figure \ref{fig:Tree} takes place on the right-hand side the active X-point, resulting in a dichotomy for the limit function as in lemma \ref{lemma:0x11} in the tractability domain. More precisely, we shall prove the following theorem.

\begin{theorem}\label{thm:Generalrk2}
Any tractable X-point of rank at least $2$ is dichotomic in its tractability domain.
\end{theorem}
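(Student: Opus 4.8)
The plan is to show that the tractability conditions \textbf{T1}--\textbf{T3} force the right-hand dynamics of the X-point $p$ to be a rescaled copy of the flow chart in figure~\ref{fig:Tree}, and then to extract the dichotomy from that combinatorial structure exactly as in the proof of lemma~\ref{lemma:0x11}. The first step is to set up the rescaling: by bundle equivalence (the construction preceding theorem~\ref{thm:GeneralSymmetry}, applied to the standard triad $\Omega=[Y_i,Y_j;Y]$) we may affinely normalise so that, on $\mathcal{T}=(p,p_\ast)$, the X-point sits at the origin, $m(p)=\tfrac12$, the two X-point functions are $0$ and $x$ (up to the rank-$\geqslant 2$ multiplicity structure), and the standard auxiliary function $Y$ is the constant $1$. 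Under this normalisation the initial data at time $\tau(p)$ matches, on $\mathcal{T}$, the data of $\hat\Xi=[0,x,1,1]$ at the corresponding time step, because $p$ has rank at least $2$ (so the two functions concatenating into the current median each have multiplicity $\geqslant 1$ on either side, as in the rank-$2$ picture of figure~\ref{fig:rank}) and is active with $\min\mathcal Y_p = Y$.

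The second, and main, step is a strong induction on $n$ from $\tau(p)$ to the tractability index $\ell$ (and beyond), showing that on $\mathcal{T}$ the iterate $Y_n$, the median $\mathcal{M}_n$, the core $\Lambda_n$, and the regularity intervals $U_n=(p,p_n)$, $V_n=(p,q_n)$ evolve precisely according to the six cases \eqref{eq:case1}--\eqref{eq:case6} established inside the proof of lemma~\ref{lemma:0x11}. The point of the three tractability hypotheses is exactly to license this transfer: \textbf{T1} guarantees that $\mathcal{M}_\ell$ and $Y$ are regular on $\mathcal{T}$ and cross at $p_\ast$, so the auxiliary function behaves like the level $1$ on the whole window; \textbf{T2} rules out a spurious X-point $Y\bowtie\overline Y$ inside $\mathcal{T}$ that would inject a corner from the pre-history $\Xi_{\tau(p)-1}$ into the relevant region; and \textbf{T3} says none of the iterates $Y_{\tau(p)},\dots,Y_\ell$ dips below $Y$ with a corner in $\mathcal{T}$, which is what prevents the median sequence from "seeing" any function other than the triad members. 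Together these say that, restricted to $\mathcal{T}$, the median at each step is still either the lower concatenation of the two current central functions or their average, with everything else lying above $Y$; this is the only place the general setting could differ from $\hat\Xi$, and \textbf{T1}--\textbf{T3} close that gap. Lemmas~\ref{lemma:affinecombinations} and~\ref{lemma:independencefromhistory} (via theorem~\ref{theorem:inheritance}) do the bookkeeping that all functions produced after stabilisation remain affine combinations of $\Omega^\updownarrow$, so no uncontrolled function can intrude.

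The third step is the conclusion. Once the recursion \eqref{eq:case1}--\eqref{eq:case6} is known to hold on $\mathcal{T}$, the sequence $(p_n)$ is non-increasing and bounded below by $p$, hence converges to some $p_\infty\in[p,p_\ast]$; each new $p_n$ creates a jump of the transit time as in lemma~\ref{lemma:Type3}. If $p_\infty>p$, the median sequence stabilises at or converges to the broken-line function given by the right-hand branch of \eqref{eq:Generalm1}; if $p_\infty=p$, it converges to \eqref{eq:Generalm2}. Exactly one of the two cases occurs, which is the asserted dichotomy on the tractability domain $[p,p_\ast)$. I expect the main obstacle to be the bookkeeping in the induction of step two: verifying that \textbf{T1}--\textbf{T3} genuinely suffice to exclude \emph{every} way in which a pre-history function, or a corner of $Y$ itself, could alter which functions form the core at some intermediate step --- i.e. checking that the case analysis of lemma~\ref{lemma:0x11} is robust under the only perturbation the general setting allows, namely the presence of finitely many other (higher) functions and a finite stock of pre-existing corners outside $\mathcal{T}$. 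The rank-$\geqslant 2$ hypothesis is what keeps the core combinatorics identical to the $[0,x,1,1]$ model rather than the degenerate rank-$1$ variant treated later in theorems~\ref{thm:AuxSeq} and~\ref{thm:Generalrk1}.
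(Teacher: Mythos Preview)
Your overall strategy---show that tractability forces the flow-chart dynamics of figure~\ref{fig:Tree} to run on $\mathcal{T}$, then read off the dichotomy from the convergence of $(p_n)$---is exactly the paper's, and your third step matches its conclusion verbatim. However, the paper's execution is more direct than your first two steps in two respects.

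First, the paper does not normalise. It simply redefines $p_n$, $q_n$, $U_n$, $V_n$, $I_n$, $H_n$ in the general setting (with $o=(p,m(p))$ playing the role of $(0,\tfrac12)$ and $Y$ the role of the constant $1$) and reruns the parity case-split of lemma~\ref{lemma:0x11} verbatim. No bundle equivalence or rescaling map is invoked; the only fact used about the past is that every median $\mathcal{M}_k$ with $k\geqslant\tau(p)-1$ and every $Y_k$ with $k\geqslant\tau(p)$ passes through $o$, which follows immediately from stabilisation at $p$.

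Second, the paper does not invoke lemmas~\ref{lemma:affinecombinations}, \ref{lemma:independencefromhistory} or theorem~\ref{theorem:inheritance}. Those results are about inheritance of the $(\mu,f)$ symmetry and require $p$ to be \emph{proper}, which theorem~\ref{thm:Generalrk2} does not assume; appealing to them here introduces a hypothesis you are not given. They are also unnecessary: the flow-chart argument needs only that the core at each step $n\geqslant\ell$ is formed from functions through $o$ lying below $Y$ on the current regularity interval, and \textbf{T1}--\textbf{T3} supply exactly this without any affine-combination bookkeeping. Relatedly, the paper begins the recursion at $n=\ell$, not at $\tau(p)$: the tractability index is precisely the first time the configuration is certified to match the flow-chart template, so there is nothing to induct over in the range $[\tau(p),\ell)$---condition \textbf{T3} is used only to ensure that the functions already produced in that range do not place corners below $Y$ inside $\mathcal{T}$, so that the definition of $p_n$ for $n\geqslant\ell$ is clean.
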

\begin{proof}
Let $p$ be a tractable X-point of rank at least $2$ with index $\ell$, domain $\left(p,p_\ast\right)$, and standard auxiliary function $Y$. Then every median $\mathcal{M}_{k}$, $k\geqslant\tau(p)-1$, and every function $Y_{k}$, $k\geqslant\tau(p)$, passes through the point $o:=(p,m(p))$. For every $n\geqslant\ell$, let
\begin{eqnarray*}
p_n&:=&\max\left\{\overline{p}\in\mathbb{Q}:\text{all functions in }\Xi_n\text{ passing through }o\text{ and not below }\mathcal{M}_\ell\right.\\
&&\left.\text{ are regular in }\left(p,\overline{p}\right)\right\},
\end{eqnarray*}
and
$$q_n:=\begin{cases}
p_\ast&\text{if }n=\ell\\
\max\left\{\overline{q}\in\mathbb{Q}:\Lambda_n\text{ is regular in }\left(p,\overline{q}\right)\right\}&\text{if }n\geqslant\ell+1,
\end{cases}$$
where $\Lambda_n$ denotes the core of $\Xi_n$, so that
\begin{equation}\label{eq:newDOR}
U_n:=\left(p,p_n\right)\qquad\qquad\text{and}\qquad\qquad V_n:=\left(p,q_n\right)
\end{equation}
are the domain of regularity of the participating functions in the bundle and that of the core, respectively, at time step $n$. In addition, let
\begin{equation}\label{eq:newIn}
I_n:=\left\{x>p:Y_n(x)=Y(x)\right\},\qquad\text{for every }n\geqslant\tau(p),
\end{equation}
and
\begin{equation}\label{eq:newHn}
H_n:=\bigcup_{i=\tau(p)}^n I_i,\qquad\text{for every }n\geqslant\ell+1.
\end{equation}

For every $n\geqslant\ell$, we express $p_{n+1}$ and $q_{n+1}$ in terms of $p_n$ and $q_n$ by dividing into cases in the same way as we have done for $\hat{\Xi}$, namely:\medskip

\noindent\textsc{\underline{Case I}}: $n$ is even. Letting $\Lambda_n=\left[Y_i,Y_j\right]$, where $Y_i\leqslant Y_j$, we obtain
$$Y_{n+1}=\frac{n-1}{2}\left(Y_j-Y_i\right)+Y_j,$$
which implies \eqref{eq:case1} if $Y_i=Y_j$ or \eqref{eq:case2} if $Y_i<Y_j$.\smallskip

\noindent\textsc{\underline{Case II}}: $n$ is odd. Letting $\Lambda_n=\left[Y_i,Y_j,Y_k\right]$, where
$Y_i\leqslant Y_j\leqslant Y_k$, we obtain
$$Y_{n+1}=\frac{n}{2}\left(Y_j-Y_i\right)+Y_j,$$
which implies \eqref{eq:case3} if $Y_j=Y_k$, \eqref{eq:case4} if $Y_i=Y_j$, \eqref{eq:case5} if
$Y_i<Y_j$ and $Y_{n+1}<Y_k$, or \eqref{eq:case6} if $Y_i<Y_j$ and $Y_{n+1}\geqslant Y_k$.\medskip

\noindent This division into cases is once again summarised by figure \ref{fig:Tree}, where the sets involved
are those defined in \eqref{eq:newDOR}, \eqref{eq:newIn}, and \eqref{eq:newHn}.

Since the sequence $\left(p_n\right)_{n=\ell}^\infty$ is non-increasing and bounded below by $p$, then it converges, 
say to $p_\infty\in\left[p,p_\ast\right]$. Moreover, every term of this sequence creates a jump discontinuity of the transit time as described in lemma \ref{lemma:Type3}. Now there are two cases. If $p_\infty>p$, then the sequence $\left(p_n\right)_{n=\ell}^\infty$ either stabilises or converges without stabilising, and so the median sequence either stabilises at or converges without stabilising to the function which emanates from $o$ and meets $Y$ at $p_\infty$, 
namely \eqref{eq:Generalm1}. If $p_\infty=p$, then the sequence $\left(p_n\right)_{n=\ell}^\infty$ converges without stabilising and the median sequence converges without stabilising to \eqref{eq:Generalm2}. The theorem is proved.
\end{proof}\vspace{0.5cm}

If an active X-point is dichotomic, a simple test adapted from the derivation of theorem \ref{thm:0x11} is applicable to ensure that the number $p_\infty$ exists, i.e., that the first assertion for the limit function given in the dichotomy is true. Our next goal is to establish a result similar to theorem \ref{thm:Generalrk2} for X-points of rank $1$.

\subsection{X-points of rank 1 and their auxiliary sequences}

In the tractability domain of a tractable X-point of high rank, every subsequent function intersects the standard auxiliary function at a new regular X-point. Thus, the system generates a sequence of secondary X-points lying on the standard auxiliary function, which we shall call the \textit{auxiliary sequence}. This sequence partitions the domain into 
adjacent subintervals, within which the dynamics has the simple structure prescribed by 
lemma \ref{lemma:Type3}, because the standard auxiliary function has high multiplicity. 

If the X-point is of rank $1$, then an auxiliary sequence of regular X-points is generated in exactly 
the same way. However, since the auxiliary function has unit multiplicity, between these
secondary X-points the limit function no longer has a trivial form.
In fact, as functions of high multiplicity are rare, each secondary X-point 
is typically of rank $1$, and hence in turn possesses its own auxiliary sequence of
tertiary X-points, each of which is typically of rank $1$, and so on. 
There is, therefore, a hierarchical organisation\footnote{Thus, given a system, one would like to construct a tree which contains all its X-points and describes the hierarchy rigorously. This task is not easy and hence remains open.} of X-points of rank $1$.

To be more precise, we work on the right-hand side of an X-point $p$ of (right-)rank $1$ which is tractable with index $\ell$ and domain $\left(p,p_\ast\right)$, where the median sequence is assumed to be non-decreasing. Let $o:=(p,m(p))$. The \textit{auxiliary sequence} of $p$ is the sequence $\left(p_n\right)_{n=\ell}^\infty$, where
\begin{eqnarray*}
p_n&:=&\max\left\{\overline{p}\in\mathbb{Q}:\text{all functions in }\Xi_n\text{ passing through }o\text{ and not below }\mathcal{M}_\ell\right.\\
&&\left.\text{ are regular in }\left(p,\overline{p}\right)\right\},
\end{eqnarray*}
for every $n\geqslant\ell$. Since $p$ is tractable, the evolution of this sequence, together with that of its partner sequence $\left(q_n\right)_{n=\ell}^\infty$, where
$$q_n:=\begin{cases}
p_\ast&\text{if }n=\ell\\
\max\left\{\overline{q}\in\mathbb{Q}\cup\{\infty\}:\Lambda_n\text{ is regular in }\left(p,\overline{q}\right)\right\}&\text{if }n\geqslant\ell+1,
\end{cases}$$
for every $n\geqslant\ell$, is described in figure \ref{fig:Tree}, where the sets involved are those defined in \eqref{eq:newDOR}, \eqref{eq:newIn}, and \eqref{eq:newHn}. Therefore, we know the following properties of the auxiliary sequence.

\begin{theorem}\label{thm:AuxSeq}
The auxiliary sequence of a tractable X-point of rank $1$ has the following properties:
\begin{enumerate}
\item[\textrm{i)}] The sequence is non-increasing.
\item[\textrm{ii)}] The sequence contains either infinitely or finitely many distinct regular X-points, each of which, except the last one in the latter case, is a local minimum of the limit function.
\item[\textrm{iii)}] The transit times of the distinct terms of this sequence form an increasing arithmetic progression of common difference $2$.
\item[\textrm{iv)}] The sequence converges to $p$ if and only if the limit function is discontinuous at $p$.
\end{enumerate}
\end{theorem}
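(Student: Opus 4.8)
The plan is to read all four assertions off the flow chart of Figure \ref{fig:Tree}, which (as established for the high-rank case in the proof of Theorem \ref{thm:Generalrk2}) governs the joint evolution of the sequences $(p_n)_{n=\ell}^\infty$ and $(q_n)_{n=\ell}^\infty$ attached to a tractable X-point \emph{of any rank}: the rank of $p$ affects only the structure of $m$ \emph{between} the terms of $(p_n)$, not the transition rules \eqref{eq:case1}--\eqref{eq:case6} themselves. Assertion (i) is then immediate, exactly as in the proof of Theorem \ref{thm:Generalrk2}: each of the six transitions gives $p_{n+1}\in\{p_n,q_n\}$, the inequality $q_n\leqslant p_n$ holds at $n=\ell$ and is preserved by each rule, hence $(p_n)$ is non-increasing.

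For (ii) I would argue as follows. Since $(p_n)$ is non-increasing, its set of values is a strictly decreasing, finite or infinite, sequence $p_{n_1}>p_{n_2}>\cdots$. Reading Figure \ref{fig:Tree}, a genuine drop $p_{n+1}<p_n$ is produced only at the non-stabilising node \eqref{eq:case2} (with $n$ even and $Y_i<Y_j$); the three stabilising nodes freeze the sequence and the remaining nodes keep $p$ fixed. At such a drop the new function $Y_{n+1}$ acquires, at abscissa $p_n$, a corner which by conditions \textbf{T1}--\textbf{T3} is a transversal crossing of an orbit function with the standard auxiliary function $Y$; since by Theorem \ref{theorem:inheritance} (and Lemma \ref{lemma:independencefromhistory}) $Y_{n+1}$ is regular to the right of $p$, this makes $p_n$ a regular X-point lying on $Y$. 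That X-point is monotonic (the median sequence being non-decreasing there) and lies strictly above the current median, so by \cite[theorem 2.4]{ChamberlandMartelli} its limit exists and equals $Y(p_n)$; and because $Y$ has unit multiplicity, on each subinterval $(p_{n_{k+1}},p_{n_k})$ the median sequence overshoots $Y$ rather than sticking to it, so $m$ stays above the value $Y(p_{n_k})$ on both sides of $p_{n_k}$, making it a local minimum. When the sequence is finite its last term is produced at a stabilising node and need not be a local minimum.

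For (iii) I would track, in Figure \ref{fig:Tree}, at which \mmm-iteration each distinct term is created: one productive step \eqref{eq:case2} followed by one step \eqref{eq:case5} or \eqref{eq:case6}, i.e.\ two iterations per new term; combined with the fact (used already in the proof of Theorem \ref{thm:Generalrk2}) that every term of $(p_n)$ creates a transit-time jump of the type described by Lemma \ref{lemma:Type3}, so that the transit time of a distinct term equals its production step shifted by a fixed constant, this yields transit times differing by exactly $2$, increasing for as long as production continues. For (iv): $(p_n)$ is non-increasing and bounded below by $p$, hence $p_n\to p_\infty\in[p,p_\ast]$; if $p_\infty>p$ then, as in the proof of Theorem \ref{thm:Generalrk2}, the median sequence converges on $[p,p_\infty)$ to the affine function through $o=(p,m(p))$ meeting $Y$ at $p_\infty$, so $m$ is continuous at $p$; if $p_\infty=p$ the regularity domains $U_n=(p,p_n)$ shrink to the empty set, so for every fixed $x>p$ the median there eventually climbs past the level of $Y$ near $p$, forcing $\lim_{x\to p^+}m(x)>m(p)$ and a discontinuity. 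Exclusivity of the two alternatives gives the equivalence.

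The main obstacle is the local-minimum claim in (ii): the flow-chart bookkeeping by itself only locates the auxiliary X-points and shows that the transit time jumps there, so one must additionally certify that the sub-structure generated on a subinterval $(p_{n_{k+1}},p_{n_k})$ never dips below the level of $Y$ ---equivalently, that once the median sequence reaches $Y$ with unit multiplicity it overshoots and remains above $Y(p_{n_k})$ on both sides of $p_{n_k}$. Doing this rigorously requires combining the symmetry-inheritance of Theorem \ref{theorem:inheritance} (so that the two sides of each auxiliary X-point are linked) with a careful analysis of the overshoot, and the same care about which iteration in the cycle is "productive" is what is needed to make (iii) airtight.
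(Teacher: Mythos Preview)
Your overall scaffolding via the flow chart of Figure~\ref{fig:Tree} matches the paper's, and your treatment of (i) and (iv) is essentially the paper's. The gaps are in (ii) and (iii), and in both cases the paper's route is shorter and avoids the obstacles you flag.

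For (ii), the local-minimum claim, you try to argue by ``overshoot'' and symmetry inheritance, and you correctly identify this as the hard point of your approach. The paper bypasses all of it with a single application of Lemma~\ref{lemma:translation}. At each distinct auxiliary term $p_n$ (not the last one), the medians $\mathcal{M}_{\tau(p_n)-2}$ and $\mathcal{M}_{\tau(p_n)-1}$ are, locally, the lower concatenation and the average of the two regular functions forming the X-point $p_n$. Lemma~\ref{lemma:translation} then forces $Y_{\tau(p_n)}$ strictly above the upper concatenation (except at $p_n$), so $\mathcal{M}_{\tau(p_n)}$ \emph{is} the upper concatenation. Since the median sequence is non-decreasing, $m\geqslant\mathcal{M}_{\tau(p_n)}$, and the upper concatenation already has a strict local minimum at $p_n$ with value $m(p_n)=Y(p_n)$. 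No overshoot analysis, no Theorem~\ref{theorem:inheritance}, no appeal to \cite{ChamberlandMartelli} is needed.

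For (iii), your appeal to Lemma~\ref{lemma:Type3} is not valid here: that lemma requires one of the crossing functions to have multiplicity at least $2$, which is exactly what fails at rank $1$ (the standard auxiliary function $Y$ has unit multiplicity at each auxiliary X-point). The paper instead reads off the transit times directly from the local median picture (Figure~\ref{fig:SeqOfMedians}): with $\mathcal{M}_{\tau(p_n)}$ identified as the upper concatenation as above, one sees at the next distinct term $p_{n+r}$ that stabilisation occurs exactly two steps later, giving $\tau(p_{n+r})=\tau(p_n)+2$. Your flow-chart accounting of ``two iterations per productive step'' is on the right track, but it should be tied to the medians at $p_{n+r}$ rather than to Lemma~\ref{lemma:Type3}.
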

\begin{proof}
Let $p$ be an X-point of rank $1$ which is tractable with index $\ell$ and standard auxiliary function $Y$. Part i) holds since the auxiliary sequence follows figure \ref{fig:Tree}. We now prove parts ii) and iii).

Since the auxiliary sequence is non-increasing and bounded below by $p$, it converges. If it stabilises, it contains only finitely many distinct X-points. Otherwise, it contains infinitely many distinct X-points. Clearly, these X-points are all regular. Now let $p_n$ be an arbitrary term of the auxiliary sequence which is not the term at which it stabilises. In its neighbourhood, the medians $\mathcal{M}_{\tau\left(p_n\right)-2}$ and $\mathcal{M}_{\tau\left(p_n\right)-1}$ are the lower concatenation and the average, respectively, of the two regular functions forming the X-point $p_n$. By lemma \ref{lemma:translation}, it follows that $Y_{\tau\left(p_n\right)}$ lies above the upper concatenation of the X-point functions (except at $p_n$) which therefore becomes $\mathcal{M}_{\tau\left(p_n\right)}$. Since the limit function is at least the latter median, we have proved that $p_n$ is a local minimum of the limit function. 

Next, if $r$ is the smallest positive integer for which $p_{n+r}<p_n$, then $\tau\left(p_{n+r}\right)=\tau\left(p_n\right)+2$ (figure \ref{fig:SeqOfMedians}), and hence the transit times of the distinct terms of $\left(p_n\right)_{n=\ell}^\infty$ form an increasing arithmetic progression of common difference $2$. The proof of parts ii) and iii) is complete.

\begin{figure}
\centering
\begin{tikzpicture}
\begin{axis}[
	xmin=0,
	xmax=6.25,
	ymin=0.5,
	ymax=4.1,
    xtick={1,4,5},
    ytick=\empty,
    xticklabels={$p$,$p_{n+r}$,$p_n$},
	axis lines=middle,
	samples=100,
	xlabel=$x$,
	ylabel=$Y_n(x)$,
	width=8cm,
	height=6cm,
	clip=false,
	axis lines=middle,
    x axis line style=->,
    y axis line style=->,
]
\addplot [thick,domain=0.5:5.5] {(7*x+5)/12}; 
\addplot [thick,domain=0.5:5.5] {(3*x+5)/8}; 
\addplot [thick,domain=0.5:5.5] {(-x+15)/4} node[right,color=black] {$Y$}; 
\draw[dashed] (axis cs:1,0.5) -- (axis cs:1,1);
\draw[dashed] (axis cs:4,0.5) -- (axis cs:4,2.75);
\draw[dashed] (axis cs:5,0.5) -- (axis cs:5,2.5);
\addplot [very thick,color=red,domain=1:5] {(3*x+5)/8};
\addplot [very thick,color=red,domain=5:5.5] {(-x+15)/4};
\addplot [very thick,color=green,domain=1:4] {(7*x+5)/12};
\addplot [very thick,color=green,domain=4:5] {(-x+15)/4};
\addplot [very thick,color=green,domain=5:5.5] {(3*x+5)/8};
\addplot [very thick,color=blue,domain=1:4] {(23*x+25)/48};
\addplot [very thick,color=blue,domain=4:5.5] {(x+35)/16};
\node[below] at (axis cs:2.5,0.385) {$\ldots$};
\node[below] at (axis cs:5.625,0.385) {$\ldots$};
\fill[black] (axis cs:1,1) circle (2pt);
\draw[thick,fill=white] (axis cs:4,2.75) circle (2pt);
\draw[thick,fill=white] (axis cs:5,2.5) circle (2pt);
\end{axis}
\end{tikzpicture}
\caption{\label{fig:SeqOfMedians}\small The medians $\mathcal{M}_{\tau\left(p_n\right)-2}$ 
(red), $\mathcal{M}_{\tau\left(p_n\right)-1}$ (blue), and 
$\mathcal{M}_{\tau\left(p_n\right)}$ (green),
showing that $\tau\left(p_{n+r}\right)=\tau\left(p_n\right)+2$.}
\end{figure}
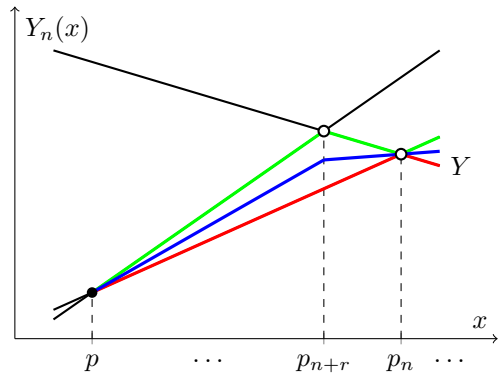

Finally, suppose the auxiliary sequence converges to $p_\infty\in\left[p,p_\ell\right]$. If $p_\infty=p$, then the limit function is discontinuous at $p$. Otherwise, the limit function connects the points $(p,m(p))$ and $\left(p_\infty,Y\left(p_\infty\right)\right)$, so it is continuous at $p$. We have therefore proved part iv).
\end{proof}

Now suppose that the auxiliary sequence stabilises. In this case, the functional orbit near $p$ stabilises and ---assuming no unwanted interactions with earlier functions--- its limit lies immediately above the second-to-last secondary X-point, making it an active high-rank X-point. By identifying its tractability domain, we can establish the shape of the limit function near $p$.

\begin{theorem}\label{thm:Generalrk1}
Let $p$ be a tractable X-point of rank $1$ with standard auxiliary function $Y$ and a stabilising auxiliary sequence, letting $\dddot{p\hspace{0pt}}$, $\ddot{p}$, and $\dot{p}$ be its last three distinct terms, assuming they exist. Suppose $\Xi_{\tau(p)-1}$ is regular in $\left(p,\dddot{p\hspace{0pt}}\right)$ and the number $$p':=\min\left\{x\in\left(\ddot{p},\dddot{p\hspace{0pt}}\right):Y_{\tau\left(\ddot{p}\right)}(x)=Y^\ast(x)\right\}$$ exists, where $Y^\ast$ is the function at which the functional orbit near $p$ stabilises.
\begin{enumerate}
\item[\textrm{i)}] If for every $\overline{Y} \in \Xi_{\tau(p)-1}\backslash[Y]$ we have either $\overline{Y}\left(\ddot{p}\right)<Y\left(\ddot{p}\right)$ or $\overline{Y}\left(p'\right)>Y^*\left(p'\right)$, then $\ddot{p}$ is a regular active X-point of high left-rank which is left-dichotomic in $\left[p,\ddot{p}\right)$ with $Y^\ast$ as its standard auxiliary function.
\item[\textrm{ii)}] If, in addition, in the interval $\left(\ddot{p},\dddot{p\hspace{0pt}}\right)$ we have that $p'<Y_{\tau\left(\dddot{p\hspace{0pt}}\right)}\bowtie Y^\ast$, then $\ddot{p}$ is of high right-rank and is right-dichotomic in $\left[\ddot{p},p'\right)$ with $Y^\ast$ as its standard auxiliary function.
\end{enumerate}
\end{theorem}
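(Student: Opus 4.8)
The plan is to deduce both assertions from (the one-sided versions of) Theorem \ref{thm:Generalrk2} applied to the secondary X-point $\ddot{p}$. Once we know that $\ddot{p}$ is a regular active X-point whose standard auxiliary function on the relevant side is $Y^\ast$, that it is of high rank on that side, and that it is tractable there with tractability domain $[p,\ddot{p})$ in case (i) and $[\ddot{p},p')$ in case (ii), Theorem \ref{thm:Generalrk2} yields the stated (left- or right-) dichotomy at once. So the real work is to verify these properties of $\ddot{p}$, and the hypotheses of the theorem are exactly what is needed to control the interaction between the post-stabilisation dynamics near $p$ and the functions already present in $\Xi_{\tau(p)-1}$.

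First I would assemble the data. Since the auxiliary sequence $(p_n)_{n\geq\ell}$ of $p$ is non-increasing, stabilises at $\dot{p}$, and has $\dddot{p}$, $\ddot{p}$, $\dot{p}$ as its last three distinct terms, Theorem \ref{thm:AuxSeq} gives that $\dddot{p}$ and $\ddot{p}$ are regular X-points and local minima of $m$, that $\tau(\dddot{p})<\tau(\ddot{p})<\tau(\dot{p})$ with $\tau(\ddot{p})+2=\tau(\dot{p})$, and that every late median and every late orbit function passes through $o:=(p,m(p))$. Stabilisation of the auxiliary sequence means that the functional orbit near $p$ stabilises; $Y^\ast$ is the affine function at which it does so, and from $m=Y^\ast$ on $[p,\dot{p})$ one reads off that $Y^\ast$ passes through $o$ and meets $Y$ at $\dot{p}$, so that for $x$ just to the right of $\dot{p}$ one has $Y^\ast>Y$. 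Tracking the flow chart of figure \ref{fig:Tree} as in the proof of Theorem \ref{thm:Generalrk2}, the two functions crossing transversally at $\ddot{p}$ are $Y$ and the function created when the auxiliary sequence first reaches $\ddot{p}$; at the point $\ddot{p}$ the median freezes at $m(\ddot{p})=Y(\ddot{p})$, and $Y^\ast$ lies above $\ddot{p}$ at this level. To conclude that $\ddot{p}$ is active with $\min\mathcal{Y}_{\ddot{p}}=Y^\ast$ we must exclude any earlier function lying between the X-point and $Y^\ast$ at level $m(\ddot{p})$, which is precisely the first alternative $\overline{Y}(\ddot{p})<Y(\ddot{p})$ of the hypothesis in (i), keeping every $\overline{Y}\in\Xi_{\tau(p)-1}\setminus[Y]$ below the level $Y(\ddot{p})$ at $\ddot{p}$; together with the fact that $Y^\ast$, being reproduced after stabilisation via \eqref{eq:mmm2}, occurs with multiplicity at least $2$ among the functions governing the dynamics near $\ddot{p}$, this shows $\ddot{p}$ is of high (left-)rank.

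Next I would verify the one-sided tractability conditions for $\ddot{p}$ with standard auxiliary function $Y^\ast$, taking $\tau(\ddot{p})$ (or the first step at which the core reaches $Y^\ast$) as the tractability index. On the left: T1 holds because $\mathcal{M}$ and $Y^\ast$ both pass through $o$ and therefore meet at $p$, giving the tractability domain $[p,\ddot{p})$, and both are regular on $(p,\ddot{p})$ since $\Xi_{\tau(p)-1}$ is regular on $(p,\dddot{p})\supseteq(p,\ddot{p})$ and the post-stabilisation functions are affine there; T2 holds because any transversal intersection of $Y^\ast$ with a function $\overline{Y}\in\Xi_{\tau(p)-1}$ in $(p,\ddot{p})$ is excluded by the hypothesis that each such $\overline{Y}$ is either below $Y(\ddot{p})$ at $\ddot{p}$ or above $Y^\ast$ at $p'$; T3 follows from the same regularity of $\Xi_{\tau(p)-1}$ on $(p,\dddot{p})$, which prevents any $Y_n$ with $\tau(p)\leq n\leq\tau(\ddot{p})$ from acquiring a corner below $Y^\ast$ there. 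Then the left-hand version of Theorem \ref{thm:Generalrk2} gives (i). For (ii) the same checks are made on the right, now with tractability domain $[\ddot{p},p')$: the number $p'=\min\{x\in(\ddot{p},\dddot{p}):Y_{\tau(\ddot{p})}(x)=Y^\ast(x)\}$ plays the role of $p_\ast$ in T1, and the extra hypothesis $p'<Y_{\tau(\dddot{p})}\bowtie Y^\ast$ guarantees that the intersection of the earlier function $Y_{\tau(\dddot{p})}$ with $Y^\ast$ does not lie in $(\ddot{p},p')$, securing T2 on the right; the right-hand version of Theorem \ref{thm:Generalrk2} then gives (ii).

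The hard part will be the bookkeeping in the tractability verification: one must show that the dynamics near $\ddot{p}$, after the auxiliary sequence of $p$ has stabilised, genuinely reproduces the flow chart of figure \ref{fig:Tree} with $Y^\ast$ as auxiliary function, with no spurious transversal intersection $Y^\ast\bowtie\overline{Y}$ (T2) and no stray corner (T3) coming from functions inherited from before $p$ stabilised---this is exactly where the two position alternatives in (i) and the inequality $p'<Y_{\tau(\dddot{p})}\bowtie Y^\ast$ in (ii) are used. A secondary subtlety is confirming that $Y^\ast$ appears with multiplicity at least two in $\Xi_{\tau(\ddot{p})-1}$, which should follow from examining the last steps of the auxiliary-sequence recursion before stabilisation. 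The remaining computations---identifying the creating function of $\ddot{p}$, evaluating the medians through $o$, and locating $p_\infty$ in the resulting dichotomy by a single evaluation adapted from the derivation of Theorem \ref{thm:0x11}---are routine.
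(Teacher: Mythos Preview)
Your overall strategy---verify that $\ddot{p}$ is regular, active, of high rank on the relevant side with standard auxiliary function $Y^\ast$, then check tractability and invoke Theorem~\ref{thm:Generalrk2}---is exactly the paper's approach. However, two of your tractability verifications are wrong in ways that matter.

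On the left, you claim T1 holds because the median and $Y^\ast$ both pass through $o=(p,m(p))$, giving tractability domain $[p,\ddot{p})$. This is not how the argument works. Writing $t:=\tau(\dddot{p})$, so $\tau(\ddot{p})=t+2$, the median $\mathcal{M}_{t+2}$ is piecewise affine with a corner at $\dot{p}$: on $(p,\dot{p})$ it coincides with $Y^\ast$ (this is the stabilisation), while on $(\dot{p},\ddot{p})$ it is a distinct affine piece meeting $Y^\ast$ transversally at $\dot{p}$. Hence the left-tractability index is $t+2$ and the left-tractability domain is $(\dot{p},\ddot{p})$, not $(p,\ddot{p})$. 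Theorem~\ref{thm:Generalrk2} then gives the dichotomy only on $(\dot{p},\ddot{p})$; the extension to $[p,\ddot{p})$ is a separate step, using $m=Y^\ast$ on $[p,\dot{p}]$.

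On the right, your proposed tractability index $\tau(\ddot{p})=t+2$ does not work: $\mathcal{M}_{t+2}$ meets $Y^\ast$ at $\dot{p}$, which lies on the wrong side of $\ddot{p}$. The paper uses index $t+4$ and shows, via Lemmas~\ref{lemma:translation} and~\ref{lemma:translation2}, that $Y_{t+3}(p')>Y_{t+2}(p')$ and $Y_{t+4}(p')>Y_{t+2}(p')$, whence $\mathcal{M}_{t+4}$ meets $Y^\ast$ at $p'$. The additional hypothesis in~(ii), $p'<Y_t\bowtie Y^\ast$, is used not merely for T2 but to control the position of $Y_t(\ddot{p})$ relative to $Y^\ast(\ddot{p})$ and hence to confirm high right-rank via the configuration of Figure~\ref{fig:Rank2NearRank1}. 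Finally, the disjunctive hypothesis in~(i) is not used only through its first alternative as you suggest; both branches together are what rules out interference from $\overline{Y}\in\Xi_{\tau(p)-1}\setminus[Y]$ in the relevant strip.
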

\begin{proof}
Let $p$, $\dot{p}$, $\ddot{p}$, $\dddot{p\hspace{0pt}}$, $Y$, and $Y^\ast$ be as stated. Let $t:=\tau\left(\dddot{p\hspace{0pt}}\right)$. Then, by part iii) of theorem \ref{thm:AuxSeq}, we have $\tau\left(\ddot{p}\right)=t+2$ and $\tau\left(\dot{p}\right)=t+4$. Let $Y^\Delta$ be the function in $\Xi_{t-1}$ for which $\ddot{p}=Y^\Delta\bowtie Y$.

Suppose the assumption of i) is true. Then it is clear that the X-point $\ddot{p}$ is regular, active, and has $Y^\ast$ as its standard auxiliary function. Let us now prove that it has high left-rank. In the interval $\left(p,\dot{p}\right)$, the sequence $\left(\mathcal{M}_{n}\right)_{n=|\Xi|}^{t+2}$ is strictly increasing and $\mathcal{M}_{t+2}=\mathcal{M}_{t+3}=Y^\ast$, so $Y^\ast$ belongs to $\Xi_{t+2}$ with multiplicity at least two. Moreover, since $Y^\ast>Y^\Delta$ then $\mathcal{M}_{t+1}=\left\langle Y^\ast,Y^\Delta\right\rangle>Y^\Delta=\mathcal{M}_{t}$, so by lemma \ref{lemma:translation} we have $Y_{t+2}>Y^\ast$, in particular $Y_{t+2}\neq Y^\ast$, so $Y^\ast\in\Xi_{t+1}$ with multiplicity at least two. In other words, $\Xi_{t+1}$ contains at least two functions $\overline{Y}$ for which $\overline{Y}(x)=Y^\ast(x)$ for every $x\in\left(p,\dot{p}\right)$. Since these functions are all regular in $\left(p,\ddot{p}\right)$, then we have proved that $\ddot{p}$ has high left-rank. Moreover, the fact that $\mathcal{M}_{t+2}$ meets $Y^\ast$ at $\dot{p}$, together with the assumption, implies that $\ddot{p}$ is left-tractable with index $t+2$ and domain $\left(\dot{p},\ddot{p}\right)$, which means, by theorem \ref{thm:Generalrk2}, that $\ddot{p}$ is left-dichotomic in $\left(\dot{p},\ddot{p}\right)$. But in $\left[p,\dot{p}\right]$ we have $m=Y^\ast$, so $\ddot{p}$ is left-dichotomic in $\left[p,\ddot{p}\right)$. Therefore we have proved i).

\begin{figure}
\centering
\begin{tikzpicture}
\begin{axis}[
	xmin=1.964656274,
	xmax=7.619652494,
	ymin=1.85,
	ymax=5.6,
    xtick={7.44293386231,5.32899331242,3.99843677834,2,5.98735993908},
    ytick=\empty,
    xticklabels={$\dddot{p\hspace{0pt}}$,$\ddot{p}$,$\dot{p}$,$p$,$p'$},
	samples=100,
	xlabel=$x$,
	ylabel=$Y_n(x)$,
	width=12.5/16*16cm,
	height=12.5/16*6cm,
	clip=false,
    axis lines=middle,
    x axis line style=->,
    y axis line style=->
]
\draw [double,thick] (axis cs:7.44293386231,3.8143112874352902)  node[right,black] {$Y^\ast$} -- (axis cs:2.,2.);
\draw [thick] (axis cs:7.44293386231,2.0907155643717643)-- (axis cs:2.,3.) node[left,black] {$Y$};
\draw  (axis cs:7.44293386231,2.7257245149741163)node[right,black] {$Y^\Delta$}-- (axis cs:2.,2.);
\draw  (axis cs:2.,2.)-- (axis cs:7.44293386231,2.0907155643717643);
\draw [thick] (axis cs:2.,2.)--(axis cs:5.32899331242,5.2)node[above,black,yshift=-3pt] {$Y_{t+1}$}-- (axis cs:7.44293386231,2.0907155643717643);
\draw [thick] (axis cs:2.,2.)-- (axis cs:5.32899331242,4.62053690266186)node[above,black,yshift=-3pt,xshift=-6pt] {$Y_{t}$}-- (axis cs:7.44293386231,2.0907155643717643);
\draw [thick] (axis cs:5.32899331242,2.4438657749896433)-- (axis cs:3.99843677834,3.)node[above,black,yshift=-3pt] {$Y_{t+2}$}-- (axis cs:2.,2.);
\draw [thick] (axis cs:7.44293386231,2.0907155643717643) -- (axis cs:6.96550715199,4.644358173729416)  node[above,black,yshift=-3pt] {$Y_{t+2}$}-- (axis cs:5.32899331242,2.4438657749896433);

\draw[very thick,red] (axis cs:2,2) -- (axis cs:5.98735993908,2.06645599901999578664636);
\draw[very thick,yellow] (axis cs:2,2) -- (axis cs:5.32899331242,2.2496744984316743) -- (axis cs:5.98735993908,2.2001682541431916447152);
\draw[very thick,green] (axis cs:2,2) --  (axis cs:5.32899331242,2.4438657749896433) -- (axis cs:5.98735993908,2.3338805098082990223576);
\draw[very thick,blue] (axis cs:2,2) -- (axis cs:3.99843677834,2.4663019149465306) --(axis cs:5.32899331242,2.4438657749896433) --(axis cs:5.98735993908,2.43276425058413265776424);
\draw[very thick,brown] (axis cs:2,2)-- (axis cs:3.99843677834,2.666145592780758)  -- (axis cs:5.32899331242,2.4438657749896433) -- (axis cs:5.98735993908,2.5316479915240421335364);


\draw[dashed] (axis cs:7.44293386231,1.85) -- (axis cs:7.44293386231,3.8143112874352902);
\draw[dashed] (axis cs:5.32899331242,1.85) -- (axis cs:5.32899331242,5.2);
\draw[dashed] (axis cs:3.99843677834,1.85) -- (axis cs:3.99843677834,3.);
\draw[dashed] (axis cs:2,1.85) -- (axis cs:2,3); 
\draw[dashed] (axis cs:5.98735993908,1.85) -- (axis cs:5.98735993908,3.3291199796919853); 

\fill[black] (axis cs:2,2) circle (2pt);
\draw[fill=white,thick] (axis cs:7.44293386231,2.0907155643717643) circle (2pt);
\draw[fill=white,thick] (axis cs:5.32899331242,2.4438657749896433) circle (2pt);
\draw[fill=white,thick] (axis cs:3.99843677834,2.666145592780758) circle (2pt);
\end{axis}
\end{tikzpicture}
\caption{\label{fig:Rank2NearRank1}\small The situation in a right-neighbourhood of an X-point $p$ of rank $1$ if the functional orbit stabilises. Theorem \ref{thm:Generalrk1} first describes, in part i), the limit function in $\left[p,\ddot{p}\right)$. If the extra condition in part ii) holds, then we can extend the description to $\left[p,p'\right)$. The red, yellow, green, blue, and brown functions are the medians $\mathcal{M}_{t-2}$, $\mathcal{M}_{t-1}$, $\mathcal{M}_{t}$, $\mathcal{M}_{t+1}$, and $\mathcal{M}_{t+2}$, respectively.}
\end{figure}

To prove ii), first notice that, in the open interval $\left(p,\dddot{p\hspace{0pt}}\right)$, the functions $Y_t$ and $Y_{t+1}$ are singular only at $\ddot{p}$ where we have $Y_{t+1}\left(\ddot{p}\right)>Y_t\left(\ddot{p}\right)>Y^\Delta\left(\ddot{p}\right)$ by part i) of lemma \ref{lemma:translation2} and lemma \ref{lemma:translation}, knowing that $\mathcal{M}_{t-1}\left(\ddot{p}\right)>\mathcal{M}_{t-2}\left(\ddot{p}\right)$. If the assumption holds, then ---since the function $Y_{t+2}$ is singular at $\dot{p}$ where $Y_{t+2}\left(\dot{p}\right)>Y^\ast\left(\dot{p}\right)$ by lemma \ref{lemma:translation}, at $\ddot{p}$ where $Y_{t+2}\left(\dot{p}\right)=m(p)$, and at another point on the right of $p'$---
$Y_t\left(\ddot{p}\right)>Y^\ast\left(\ddot{p}\right)$, so the situation is as in figure \ref{fig:Rank2NearRank1}, showing in particular that $\ddot{p}$ has high right-rank, since in $\Xi_{t+1}$ contains at least two functions $\overline{Y}$ for which $\overline{Y}(x)=Y^\ast(x)$ for every $x\in\left(p,p'\right)$. Since part i) of lemma \ref{lemma:translation2} guarantees that $Y_{t+3}\left(p'\right)>Y_{t+2}\left(p'\right)$ and then lemma \ref{lemma:translation} guarantees that $Y_{t+4}\left(p'\right)>Y_{t+2}\left(p'\right)$, then $\mathcal{M}_{t+4}$ meets $Y^\ast$ at $p'$, so that $\ddot{p}$ is right-tractable with index $t+4$ and domain $\left[p,p'\right)$. By theorem \ref{thm:Generalrk2}, it follows that $\ddot{p}$ is right-dichotomic in $\left[p,p'\right)$.
\end{proof}

\begin{figure}
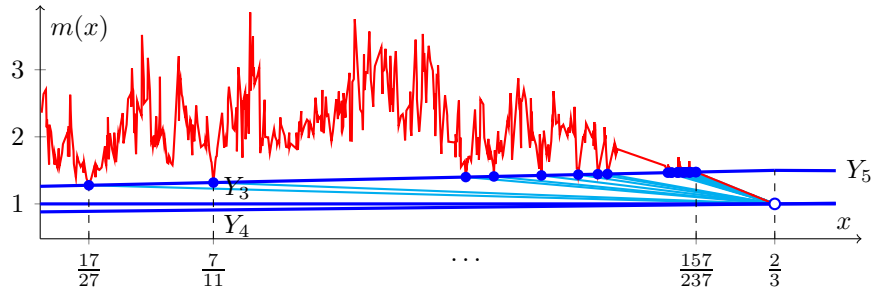

\centering
\include{fig-twothirdsrank1}
\caption{\label{fig:rank1sketch}\small\rm The situation in the left-tractability domain of the X-point $\frac{2}{3}$ of rank $1$ in the system $[0,x,1]$.}
\end{figure}

Let us apply theorems \ref{thm:AuxSeq} and \ref{thm:Generalrk1} on the left-hand side of
the active X-point $\frac{2}{3}=Y_3\bowtie Y_4$ of rank $1$ in the system $[0,x,1]$ which has limit $1$ and transit time $7$.
This X-point is left-tractable with index $9$ and domain $\left(\frac{17}{27},\frac{2}{3}\right)$. Figure \ref{fig:rank1sketch} shows the left auxiliary sequence $\left(p_n\right)_{n=9}^\infty$ 
described by theorem \ref{thm:AuxSeq} which stabilises and contains $27$ distinct regular X-points, 
namely, $\frac{17}{27}$, $\frac{7}{11}$, $\ldots$,  $\frac{626}{945}$, $\frac{157}{237}$, 
in increasing order. 
By part ii) of the theorem, the first $26$ X-points are local minima of the limit function lying on the standard auxiliary function $Y_5$. Their transit times form an arithmetic sequence of difference $2$, 
ranging from $11$ to $63$. Furthermore, the functional orbit stabilises at $Y^\ast(x)=-\frac{225}{2}x+76$, and theorem \ref{thm:Generalrk1} allows us to give a description of the limit function, firstly in
$\left(\frac{626}{945},\frac{2}{3}\right]$, in the form of a dichotomy, which is decidable by computing the orbit of a carefully chosen rational number near the X-point $\frac{626}{945}$ of rank $2$. On
the left-hand side of this X-point, we have a similar dichotomy for the limit function, decidable
analogously. This gives the following formula of the limit function in the left 
neighbourhood $\left(\frac{12310}{18583},\frac{2}{3}\right]$, which not only 
improves \cite[equation (6)]{CellarosiMunday} but also uses significantly less computer assistance:
\begin{equation}\label{eq:near2/3}
m(x)=\begin{cases}
-\frac{225}{2}x+76&\text{if }\frac{12310}{18583}<x\leqslant\frac{50110610}{75646209}\\
-\frac{75675009}{256}x+\frac{25065033}{128}&\text{if }\frac{50110610}{75646209}\leqslant x<\frac{626}{945}\\
\frac{75647841}{256}x-\frac{25055657}{128}&\text{if }\frac{626}{945}\leqslant x<\frac{50130770}{75676641}\\
-\frac{225}{2}x+76&\text{if }\frac{50130770}{75676641}\leqslant x\leqslant\frac{2}{3}.
\end{cases}
\end{equation}

A similar analysis may be performed to improve the description of the limit function near every 
X-point of rank $1$ in $\left[\frac{1}{2},\frac{2}{3}\right]$ of denominator between $3$ and $18$ given 
in \cite{CellarosiMunday}. For instance, near $\frac{7}{12}$, theorem \ref{thm:Generalrk1} gives
eight (rather than two) branches of the limit function. In section \ref{section:Computations} we will do the same for over $2000$ X-points of rank $1$ in the system.

\subsection{Pathologies}\label{section:Pathologies}

We scrutinise the assumptions of the theorems in this section, describing various pathologies.
\medskip

\noindent i) \textit{Standard triads whose symmetries are not inherited}. Consider the X-point $0=Y_1\bowtie Y_{n+1}$ in the system\footnote{For $n=1$, this system is $[0,x,1,1]$, which is studied in section \ref{section:0x11}.}
$${\hat{\Xi}}^{(n)}(x)=[\underbrace{0,0,\ldots,0}_n,x,\underbrace{1,1,\ldots,1}_{n+1}],\quad\text{where } n\geqslant 2,$$
having limit $\frac{1}{2}$ and transit time $2n+3$. This X-point is regular, monotonic, active, of rank $n+1$, but not proper. Its standard triad $\Omega=\left[Y_1,Y_{n+1};Y_{n+2}\right]$ is self-equivalent via \eqref{eq:symmetry0x11}. One checks that $Y_{2n+3}$ is not self-equivalent via the same pair. However, the triad $\overline{\Omega}=\left[Y_{2n+3},Y_{2n+4};Y_{n+1}\right]$ associated to $0=Y_{2n+3}\bowtie Y_{2n+4}$ is self-equivalent via
\begin{equation}\label{eq:exsymmetry2}
\mu(x)=\frac{nx}{x-1}\qquad\text{and}\qquad f(z)=\frac{1}{1-x}z-\frac{x}{1-x}.
\end{equation}
Since $\mathcal{M}_{2n+3}=Y_{2n+3}$ and the median sequence is non-decreasing, then every function $Y_k$, $k\geqslant 2n+3$, is an affine combination of functions in $\overline{\Omega}^\updownarrow$, and so the self-equivalence via \eqref{eq:exsymmetry2} is inherited and is satisfied by the limit function.\medskip

\noindent ii) \textit{Non-tractability and different tractability indices on different sides of an X-point}. An X-point may be both left-tractable and right-tractable with different indices, or may be tractable only on one side. Indeed, consider the systems $[0,x,\alpha x+1,\alpha x+1]$, $\alpha\in\mathbb{Q}$, with
X-point $0=Y_1\bowtie Y_2$ of rank 2, having limit $\frac{1}{2}$ and transit time $5$.

For $\alpha=-10$, the function $Y_5$ intersects the standard auxiliary function
at the point $\left(\frac{1}{13},\frac{3}{13}\right)$ only. 
Therefore, the median $\mathcal{M}_5$ meets the auxiliary function 
on the right-hand side of $0$, namely at $\frac{1}{13}$, but not on the left-hand side (figure \ref{fig:ex}). 
Thus, the X-point is right-tractable with index $\ell=5$. The next
iteration gives the function
$$Y_6(x)=\begin{cases}\frac{3}{2}x+\frac{1}{2}&
 \text{if }x\geqslant 0\\-11x+\frac{1}{2}&\text{if }x<0,\end{cases}$$
which intersects the auxiliary function at the points $\left(-\frac{1}{2},6\right)$ and
$\left(\frac{1}{23},\frac{13}{23}\right)$. Since this function eventually becomes the median $\mathcal{M}_7$, the X-point is left-tractable with index $\ell=7$.

\begin{figure}[t]
\centering
\begin{tikzpicture}
\begin{axis}[
	xmin=-0.1428571429,
	xmax=0.1428571429,
	ymin=-.4285714286,
	ymax=2.428571429,
    xtick={0.1,0.07692307692},
    ytick=\empty,
    xticklabels={$\frac{1}{10}$,$\frac{1}{13}$},
	axis lines=middle,
	samples=100,
	xlabel=$x$,
	ylabel=$Y_n(x)$,
	width=8cm,
	height=6cm,
	clip=false,
	axis line style={<->}
]
\node[left] at (axis cs:-0.125,1.25) {$Y_5$};
\node[left] at (axis cs:-0.125,2.25) {$Y_3$, $Y_4$};
\node[above left] at (axis cs:-0.125,0) {$Y_1$};
\node[below left,yshift=1pt] at (axis cs:-0.125,-0.125) {$Y_2$};
\addplot [double,thick,domain=-0.125:0.125] {-10*x+1};
\addplot [thick,domain=-0.125:0.125] {0};
\addplot [thick,domain=-0.125:0.125] {x};
\addplot [thick,domain=0:0.125] {-7/2*x+1/2};
\addplot [thick,domain=-0.125:0] {1/2-6*x};
\draw[dashed] (axis cs:0.07692307692,0) -- (axis cs:0.07692307692,0.2307692308);
\draw [red,very thick] (axis cs:-0.125,1.25) -- (axis cs:0,0.5) -- (axis cs:0.07692307692,0.2307692308) -- (axis cs:0.1,0) -- (axis cs:0.125,0);
\draw[fill=black] (axis cs:0,0) circle (2pt);
\draw[dashed] (axis cs:-0.125,0) -- (axis cs:-0.125,2.25);
\draw[dashed] (axis cs:-0.125,0) -- (axis cs:-0.125,-0.125);
\draw[dashed] (axis cs:0.125,0) -- (axis cs:0.125,0.125);
\draw[dashed] (axis cs:0.125,0) -- (axis cs:0.125,-0.25);
\end{axis}
\end{tikzpicture} 
\caption{\label{fig:ex}\rm\small The first five functions in the system $[0,x,-10x+1,-10x+1]$ and their median (red).}
\end{figure}
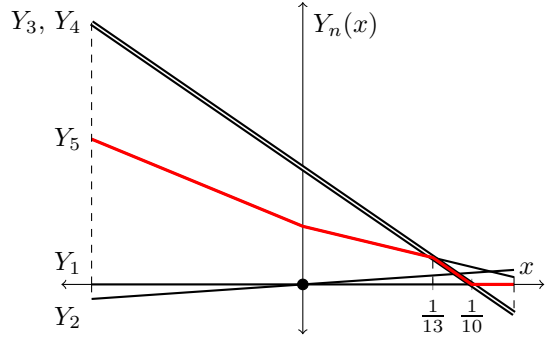

For $\alpha=-388$, the X-point is right-tractable but not left-tractable, because on
the left-hand side of the X-point the median does not intersect the auxiliary function at all.\medskip

\noindent iii) \textit{Fulfilment of all but one tractability conditions}. For an active X-point $p$ to be tractable, there must exist an odd time step $\ell\geqslant\tau(p)$ such that \textbf{T1}, \textbf{T2}, and \textbf{T3} are all satisfied. Let us now show that an odd time step $\ell\geqslant\tau(p)$ satisfying \textbf{T1} can either violate \textbf{T2} and satisfy \textbf{T3}, or violate \textbf{T3} and satisfy \textbf{T2}.

Consider the X-point $\frac{999}{1798}=Y_{12}\bowtie Y_{20}$ in the system $[0,x,1]$, having limit $\frac{4685}{3596}$ and transit time $25$. This X-point is regular, monotonic, proper, active, of rank $1$, and its standard auxiliary function is $Y_9(x)=15x-7$. On the right-hand side of this X-point, the median $\mathcal{M}_{25}$ intersects $Y_9$ at the point $\left(\frac{1867}{3360},\frac{2267}{1699}\right)$, and both $\mathcal{M}_{25}$ and $Y_9$ are regular in the interval $\mathcal{T}=\left(\frac{999}{1798},\frac{1867}{3360}\right)$. Thus, the time step $\ell=25$ satisfies \textbf{T1}. It also satisfies \textbf{T3}, since the function $Y_{25}$ has no corner below $Y_9$. However, $\frac{3991}{7183}=Y_9\bowtie Y_{23}\in\mathcal{T}$, so $\ell$ does not satisfy \textbf{T2}. See figure \ref{fig:999/1798}.

\begin{figure}
\centering
\begin{tikzpicture}
\begin{axis}[
	xmin=1.734527568,
	xmax=1.793122162,
	ymin=.6162219754, 
	ymax=41.81309255,
    xtick={1.73526,1.74300},
    ytick=\empty,
    xticklabels={$\frac{999}{1798}$,$\frac{3991}{7183}$},
	axis lines=middle,
	samples=100,
	xlabel=$x$,
	ylabel=$Y_n(x)$,
	width=8cm,
	height=6cm,
	clip=false,
    x axis line style=->,
    y axis line style=->,
]
\addplot[thick,double,domain=1.73526:1.752] {849379/400*x -36819/10} node[above,xshift=12pt,yshift=-1pt] {$Y_{32}$, $Y_{112}$};

\addplot[thick,domain=1.73526:1.78946] {171/20*x -12} node[above left] {$Y_{20}$};
\addplot[very thick,red,domain=1.73526:1.78946] {171/20*x -12};

\addplot[thick,domain=1.73526:1.78946] {3/20*x +34} node[above left] {$Y_9$};

\draw[thick] (axis cs:1.73526, 34.9555) -- (axis cs:1.75007, 33.6282); 
\addplot[thick,domain=1.73526:1.78946] {-7171/80*x+381/2} node[below left] {$Y_{23}$};

\draw[dashed] (axis cs:1.73526,.6162219754) -- (axis cs:1.73526,34.9555);
\draw[dashed] (axis cs:1.74300,.6162219754) -- (axis cs:1.74300,34.2614);
\draw[dashed] (axis cs:1.78946,.6162219754) -- (axis cs:1.78946,34.2684);

\fill[black] (axis cs:1.73526, 2.83648) circle (2pt);
\fill[black] (axis cs:1.74300, 34.2614) circle (2pt);
\end{axis}
\end{tikzpicture}
\caption{\label{fig:999/1798}\small\rm 
A violation of \textnormal{\textbf{T2}} on the right-hand side of the X-point $\frac{999}{1798}$ in the system $[0,x,1]$. The red function is the median $\mathcal{M}_{25}$.}
\end{figure}
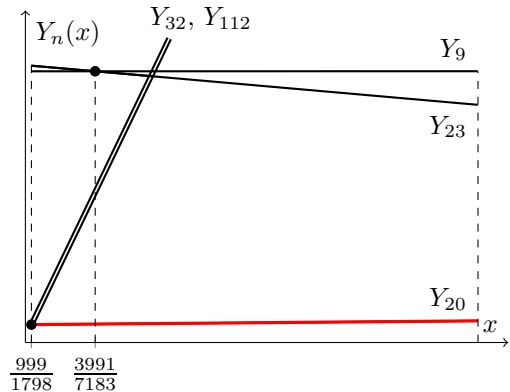

Consider the X-point $0=Y_4\bowtie Y_5$ in the system $\left[-5,-4,-3,Y_4(x),x,3,3\right]$, where
\begin{equation}\label{eq:y4}
Y_4(x)=\begin{cases}
0&\text{if }x< \frac{1}{2}\\
x-\frac{1}{2}&\text{if }x\geqslant\frac{1}{2},
\end{cases}
\end{equation}
having limit $0$ and transit time $9$. This X-point is regular, monotonic, proper, active, and of rank $2$. Its standard auxiliary function is $Y_6(x)=3$. On the right-hand side of this X-point, the median $\mathcal{M}_9$ intersects $Y_6$ at the point $(3,3)$, and both $\mathcal{M}_9$ and $Y_6$ are regular in the interval $\mathcal{T}=(0,3)$. Thus, the time step $\ell=9$ satisfies \textbf{T1}. It also satisfies \textbf{T2}, since none of the functions $Y_1$, \ldots, $Y_8$ intersect the auxiliary function at a point in $\mathcal{T}$. However, $\ell$ does not satisfy \textbf{T3}, since the function $Y_9$ has a corner $\left(\frac{1}{2},\frac{11}{4}\right)$, which lies below the auxiliary function $Y_6$. See figure \ref{fig:ex2}.

\begin{figure}[t]
\centering
\begin{tikzpicture}
\begin{axis}[
	xmin=-0.8214285714,
	xmax=4.321428571,
	ymin=-1,
	ymax=5.5,
    xtick={0,0.5,3,3.5},
    ytick=\empty,
    xticklabels={$0$,$\frac{1}{2}$,$3$,$\frac{7}{2}$},
	axis lines=middle,
	samples=100,
	xlabel=$x$,
	ylabel=$Y_n(x)$,
	width=8cm,
	height=6cm,
	clip=false,
	axis line style={<->}
]
\node[above left] at (axis cs:-0.5,0) {$Y_4$};
\node[left,yshift=-2pt] at (axis cs:-0.5,-0.5) {$Y_5$};
\addplot [double,thick,domain=-0.5:4] {3} node[right,black] {$Y_6$, $Y_7$};
\draw [thick] (axis cs:-0.5,0) -- (axis cs:0.5,0) -- (axis cs:4,3.5); 
\draw [thick] (axis cs:-0.5,-0.5) -- (axis cs:4,4); 
\draw[dashed] (axis cs:0.5,0) -- (axis cs:0.5,2.25); 
\draw[dashed] (axis cs:3,0) -- (axis cs:3,4.75); 
\draw[dashed] (axis cs:3.5,0) -- (axis cs:3.5,3);
\draw [thick] (axis cs:-0.5,1.75) -- (axis cs:0,0) -- (axis cs:0.5,2.25) -- (axis cs:3,4.75) -- (axis cs:3.5,3) -- (axis cs:4,3); 
\node[above] at (axis cs:3,4.75) {$Y_9$};
\draw[very thick,red] (axis cs:-0.5,0) -- (axis cs:0,0) -- (axis cs:3,3) -- (axis cs:4,3);
\draw[fill=black] (axis cs:0,0) circle (2pt);

\draw[dashed] (axis cs:-0.5,0) -- (axis cs:-0.5,-0.5);
\draw[dashed] (axis cs:-0.5,0) -- (axis cs:-0.5,3);
\draw[dashed] (axis cs:4,0) -- (axis cs:4,4);
\end{axis}
\end{tikzpicture}
\caption{\label{fig:ex2}\small A violation of \textnormal{\textbf{T3}} on the right-hand side of the X-point $0$ in the system $\left[-5,-4,-3,Y_4(x),x,3,3\right]$, where $Y_4$ is defined in \eqref{eq:y4}. The red function is the median $\mathcal{M}_{9}$.}
\end{figure}
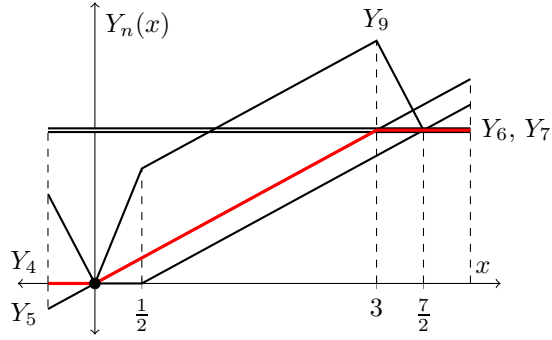

In section \ref{section:Computations} we will describe further failures of tractability. 

\section{The normal form}\label{section:ReducedSystem}

In this section we will exploit further the fact that the functional dynamics near an X-point is eventually independent from most of its earlier history. We will construct the \textit{normal form} of the \mmm, which associates to every odd integer $t\geqslant 5$ an \mmm-like dyadic rational sequence ---called the \textit{normal form orbit of order $t$}--- which is equivalent to the functional \mmm\ orbit near any proper active X-point with transit time $t$ in the system $[0,x,1]$. Later, the number of such X-points is observed to grow exponentially with $t$ (figure \ref{fig:regressions}), so that verifying the stability of a single normal form orbit means verifying the strong terminating conjecture in the vicinity of exponentially many X-points.

Let us first discuss the construction of the normal form itself. Take an odd integer $t=2h+3\geqslant 5$, and two sets $\xi^+=\left[y_1^+,\ldots,y_h^+\right]$ and $\xi^-=\left[y_1^-,\ldots,y_h^-\right]$ of positive and negative numbers, respectively, satisfying
$$\sum_{i=1}^hy_i^+ +\sum_{i=1}^hy_i^-=-1.$$ 
For any $c\geqslant 1$, define the rational set
$$
\xi:=\left(\xi^--c\right)\uplus[0,1]\uplus \left(\xi^++c\right)
$$
having even size $|\xi|=h+2+h=2h+2=t-1$, core $[0,1]$, and median $\mathcal{M}_{t-1}=\mathcal{M}(\xi)=\frac{1}{2}$. One verifies that any set obtained from $\xi$ by removing one element from $\xi^++c$ is an \mmm\ preimage of $\xi$ having median $\mathcal{M}_{t-2}=0$.

Since the medians $\mathcal{M}_{t-2}=0$ and $\mathcal{M}_{t-1}=\frac{1}{2}$ are both independent of $c$, the next iterate $y_t=t\mathcal{M}_{t-1}-(t-1)\mathcal{M}_{t-2}$ depends only on $t$, and not on $c$ or $\xi^\pm$. Moreover, if $c$ is sufficiently large, successive iterates will also depend only on $t$. Letting $c\to\infty$, we find that the whole sequence $\left(y_n\right)_{n=t}^\infty$ depends only on $t$ ---even if it is unbounded--- and we call this sequence the \textit{normal form orbit of order $t$}.

The \textit{normal form} is thus a one-parameter family of dynamical systems, the parameter being an odd integer $t\geqslant 5$, which defines the initial data
$$
\gamma_{t-1}:=[0,1],\qquad \mathcal{M}_{t-2}:=0,\qquad\mathcal{M}_{t-1}:=\frac{1}{2}
$$
and generates a sequence $\left(y_{n}\right)_{n=t}^\infty$, which depends only on $t$, via the recursion [cf.~\eqref{eq:mmm2}]
\begin{equation}\label{eq:ReducedRecursion}
y_{n}=n\mathcal{M}_{n-1}-(n-1)\mathcal{M}_{n-2}\qquad \text{and}\qquad
\gamma_n=\gamma_{n-1}\uplus \left[y_n\right],
\end{equation}
where $\mathcal{M}_n:=\mathcal{M}\left(\gamma_n\right)$, for every $n\geqslant t$. Notice that all points in a normal form orbit are dyadic rational numbers, and the associated median sequence is non-decreasing. The significance of the normal form is given by the following proposition.

\begin{proposition}\label{prop:equivalenceNF} Let $p=Y_i\bowtie Y_j\in\left[\frac{1}{2},\frac{2}{3}\right]$ be any proper active X-point in the system $\Xi_3(x):=[0,x,1]$ with the property that $t:=\tau(p)\geqslant 5$. On a side where $Y_i<Y_j$ we have
\begin{equation}\label{eq:equivalenceNF}
Y_n(x)=\left[Y_j(x)-Y_i(x)\right]y_n+Y_i(x)
\end{equation}
for every $n\geqslant t$. In particular, $\left(Y_n\right)_{n=t}^\infty$ stabilises if and only if $\left(y_n\right)_{n=t}^\infty$ stabilises.
\end{proposition}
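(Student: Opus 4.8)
The plan is to show that the affine rescaling $z\mapsto [Y_j(x)-Y_i(x)]z+Y_i(x)$, which I will denote $g_x$ (a bijection on $\mathbb{R}$ for each fixed $x$ on the side where $Y_i<Y_j$), conjugates the normal form orbit of order $t$ to the functional \mmm\ orbit near $p$. First I would fix a value of $x$ on the side under consideration, close enough to $p$ that the combinatorial structure of the \mmm\ orbit of $[0,x,1]$ is locally constant, and set $d(x):=Y_j(x)-Y_i(x)>0$. The key observation is that, because $p$ is a proper active X-point with $\tau(p)=t$, the bundle $\Xi_{t-1}$ restricted to this side has core $[Y_i,Y_j]$, median $\mathcal{M}_{t-2}=Y_i$ and $\mathcal{M}_{t-1}=\langle Y_i,Y_j\rangle$, and the set $\Xi_{t-1}(x)$ is affine-equivalent, via $g_x$, to precisely the data $\gamma_{t-1}=[0,1]$, $\mathcal{M}_{t-2}=0$, $\mathcal{M}_{t-1}=\tfrac12$ of the normal form (the properness and activeness hypotheses being exactly what guarantees that the multiplicities and the position of $\min\mathcal{Y}_p$ are as in the normal form construction, so that the bundle on this side looks like $(\xi^--c)\uplus[0,1]\uplus(\xi^++c)$ after rescaling, with $c$ effectively infinite because $x$ is near $p$).

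Next I would prove \eqref{eq:equivalenceNF} by strong induction on $n\geqslant t$. The base case $n=t$ follows from $y_t=t\mathcal{M}_{t-1}-(t-1)\mathcal{M}_{t-2}$ and $Y_t(x)=t\mathcal{M}_{t-1}(x)-(t-1)\mathcal{M}_{t-2}(x)$ together with $\mathcal{M}_{t-2}(x)=Y_i(x)=g_x(0)$ and $\mathcal{M}_{t-1}(x)=\langle Y_i,Y_j\rangle(x)=g_x(\tfrac12)$, and the fact that $g_x$, being affine, commutes with affine combinations. For the inductive step, assume \eqref{eq:equivalenceNF} holds for all indices in $\{t,\ldots,n\}$. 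By remark \ref{remark:independencefromhistory} (the variant of lemma \ref{lemma:independencefromhistory} with the auxiliary function removed, applicable here since $m(p)=Y_i(p)=Y_j(p)$), every $Y_k$ with $k\geqslant t$ is an affine combination of $Y_i\lor Y_j$ and $Y_i\land Y_j$, i.e. of $Y_i$ and $Y_j$ on this side; hence $\mathcal{M}_n$ and $\mathcal{M}_{n-1}$, being either such a $Y_k$ or the average of two of them, are affine combinations of $Y_i,Y_j$, and the same holds for the normal form medians $\mathcal{M}_n^{\mathrm{nf}},\mathcal{M}_{n-1}^{\mathrm{nf}}$ relative to $0,1$. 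The crucial point is that the \emph{ordering} of the iterates — which determines which elements form the core, hence the median — is preserved by $g_x$, since $g_x$ is increasing; so the median of $\gamma_n$ maps under $g_x$ to the median of $\Xi_n(x)$ on this side, and likewise for $n-1$. Then $Y_{n+1}(x)=(n+1)\mathcal{M}_n(x)-n\mathcal{M}_{n-1}(x)=g_x\big((n+1)\mathcal{M}_n^{\mathrm{nf}}-n\mathcal{M}_{n-1}^{\mathrm{nf}}\big)=g_x(y_{n+1})$, again because $g_x$ respects affine combinations, completing the induction.

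The final ``in particular'' clause is then immediate: $g_x$ is a bijection, so $(Y_n(x))_{n\geqslant t}$ is eventually constant if and only if $(y_n)_{n\geqslant t}$ is; and since this holds for every $x$ on this side near $p$ with the same combinatorics, stabilisation of the functional orbit near $p$ is equivalent to stabilisation of the normal form orbit of order $t$.

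I expect the main obstacle to be the careful justification that the local combinatorics of the bundle $\Xi_{t-1}$ near $p$ really does match the normal form data, and that it remains matched under iteration — i.e. that no ``stray'' function from the earlier history $Y_1,\ldots,Y_{t-1}$ interferes with the core or with the position relevant to the median as $n$ grows. This is where properness (unique transversal pair, multiplicity one), activeness (existence and position of $\min\mathcal{Y}_p$), and the hypothesis $t\geqslant 5$ are all used, and where the ``$c\to\infty$'' heuristic in the normal form construction has to be turned into the rigorous statement that, for $x$ sufficiently close to $p$, the elements of $\Xi_{t-1}(x)$ away from the core are spread far enough apart (in the rescaled picture) that they never enter the core or alter the median computation. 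I would handle this by invoking lemma \ref{lemma:independencefromhistory} / remark \ref{remark:independencefromhistory} to control which functions can ever equal the median, combined with the transversality at $p$ to ensure the separation is genuinely strict in a punctured neighbourhood.
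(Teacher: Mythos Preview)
Your proposal is correct and follows essentially the same route as the paper's proof. Both arguments hinge on remark \ref{remark:independencefromhistory} (that every $Y_n$ with $n\geqslant t$ is an affine combination of $Y_i,Y_j$) together with the affine map $g_x$ sending $0\mapsto Y_i(x)$, $1\mapsto Y_j(x)$; the paper then invokes the general fact that the \mmm\ preserves affine-equivalences, while you unpack this into an explicit strong induction with the order-preservation of $g_x$ making the median correspondence visible. Your final paragraph correctly isolates the one genuine subtlety---that the ``old'' functions $Y_1,\ldots,Y_{t-1}$ other than $Y_i,Y_j$ must stay outside the core so that the median of $\Xi_n(x)$ really equals $g_x$ applied to the median of $\gamma_n$ (these sets having different sizes)---and your proposed resolution via remark \ref{remark:independencefromhistory} plus transversality at $p$ is exactly how the paper's normal-form construction handles it: after rescaling by $g_x^{-1}$, the bundle $\Xi_{t-1}(x)$ is precisely a set of the form $(\xi^--c)\uplus[0,1]\uplus(\xi^++c)$, and the $c\to\infty$ limit defining the normal form corresponds to $x\to p$.
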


\begin{proof}
Suppose the first sentence of the proposition holds. Consider a side of the X-point where $Y_i<Y_j$. Take an integer $n\geqslant t$. Near the X-point, $Y_t=t\mathcal{M}_{t-1}-(t-1)\mathcal{M}_{t-2}=t\left\langle Y_i,Y_j\right\rangle-(t-1)Y_i$ is an affine combination of elements of $\left[Y_i,Y_j\right]$, and hence by remark \ref{remark:independencefromhistory}, so is $Y_n$. Similarly, in the normal form, $y_t$ is an affine combination of elements of $[0,1]$, and hence so is $y_n$. Now construct the unique affine transformation under which $0\mapsto Y_i(x)$ and $1\mapsto Y_j(x)$. This transformation is given by $$z\mapsto \left[Y_j(x)-Y_i(x)\right]z+Y_i(x).$$ Since $Y_n$ depends only on elements of $\left[Y_i,Y_j\right]$, $y_n$ depends only on elements of $[0,1]$, and the \mmm\ preserves affine-equivalences, then we have $y_n\mapsto Y_n(x)$ under the same affine transformation. Therefore, we have the formula \eqref{eq:equivalenceNF}.
\end{proof}

Therefore, the stabilisation of a normal form orbit implies the stabilisation of the functional orbits near all X-points having a particular transit time. We shall exploit this fact in section \ref{section:Computations} to perform efficient computations. Meanwhile, let us conjecture that all normal form orbits stabilise.

\begin{conjecture} [The strong terminating conjecture near proper active X-points]
For every odd integer $t\geqslant 5$, the normal form orbit of order $t$ stabilises.
\end{conjecture}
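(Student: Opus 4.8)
The plan is to reduce the conjecture to a finiteness statement about the median sequence, dispose of each individual order $t$ by the (finite, complexity-bounded) machinery already in place, and then confront the one genuinely hard point — uniformity in $t$. First, I would observe that by \eqref{eq:mmm2} the normal form orbit of order $t$ stabilises exactly when its median sequence $\left(\mathcal{M}_n\right)_{n\geqslant t-1}$, which is non-decreasing, assumes only finitely many values: once $\mathcal{M}_n=\mathcal{M}_{n-1}$ one has $y_{n+1}=\mathcal{M}_n$ and the median is frozen thereafter. So the goal becomes to bound the number of distinct values of $\left(\mathcal{M}_n\right)$ for every odd $t\geqslant5$. Lemma \ref{lemma:haltingproblem} already gives the orbit explicitly through its regular initial phase, and \eqref{eq:ReducedRecursion} pins down the first medians as rational functions of $t$ — one computes $\mathcal{M}_{t-2}=0$, $\mathcal{M}_{t-1}=\tfrac12$, $\mathcal{M}_t=1$, $\mathcal{M}_{t+1}=\tfrac12+\tfrac t4$, with $y_t=\tfrac t2$, $y_{t+1}=\tfrac t2+1$, $y_{t+2}=\tfrac{t^2}{4}$ — so the combinatorial type of each subsequent step is determined by $t$ alone.

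Next, I would run this explicit orbit through the same case analysis that produces the flow chart of figure \ref{fig:Tree}: the six transitions \eqref{eq:case1}--\eqref{eq:case6} are consequences of \eqref{eq:mmm2}, which is precisely the normal form recursion \eqref{eq:ReducedRecursion}, and the bookkeeping of the proof of theorem \ref{thm:Generalrk2} carries over verbatim once we view the normal form orbit as the generic fibre of the bundle dynamics near a proper active X-point of transit time $t$ (such a bundle is built exactly as in the construction preceding proposition \ref{prop:equivalenceNF}). Stabilisation then amounts to the non-increasing threshold sequence $(p_n)$ reaching a Stabilisation leaf of figure \ref{fig:Tree}, i.e. to finiteness of the associated auxiliary sequence (theorem \ref{thm:AuxSeq}); by theorem \ref{thm:AuxSeq}(iii) its transit times grow in arithmetic progression of step $2$, and by theorem \ref{thm:Generalrk1} a finite auxiliary sequence terminates at a higher-rank active X-point whose own behaviour is again dichotomic and decidable. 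For any single $t$ this nested chain of dichotomies resolves after finitely many steps — this is exactly the verification we carry out for every accessible order in section \ref{section:Computations} — and corollary \ref{cor:effexp} keeps the arithmetic complexity of the iterates, hence the cost of each check, under control throughout the regular phases.

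The hard part — and the reason the statement remains a conjecture — is to make this finiteness \emph{uniform} in $t$: no monovariant is known that forces the nested cascade of auxiliary sequences to bottom out for all orders simultaneously. Two routes look natural but neither is conclusive. One is an induction on $t$, or on the depth of the hierarchy of rank-$1$ X-points, using the fact that a stabilised auxiliary sequence reproduces, after rescaling, a normal form orbit of another order; the obstruction is that by theorem \ref{thm:AuxSeq}(iii) the induced order is \emph{larger} than $t$, so the induction is not well-founded unless one first proves an a priori bound on the depth of the cascade. The other is to strengthen corollary \ref{cor:effexp} from the regular phase to the whole orbit, i.e. to prove that the arithmetic complexity of the order-$t$ normal form iterates stays bounded along the entire sequence: boundedly-complex dyadic rationals in a bounded interval are finite in number, which — the median sequence being non-decreasing — would force it to be eventually constant. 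Either way one is thrown back on exactly the phenomenon that underlies the strong terminating conjecture itself — the slow growth of arithmetic complexity of \mmm\ iterates — here isolated in its cleanest one-parameter form, and I do not expect to resolve it without a genuinely new idea.
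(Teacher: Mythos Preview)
The statement is a \emph{conjecture} in the paper, not a theorem; the paper offers no proof, only computational verification for $t\in\{5,7,\ldots,999\}$ (section~\ref{section:Computations}) together with the structural support of lemma~\ref{lemma:haltingproblem}, theorem~\ref{thm:boundformandtau}, and corollary~\ref{cor:effexp}. There is therefore no proof in the paper to compare your proposal against.

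You recognise this yourself: your proposal is a research plan rather than a proof, and you explicitly name the obstruction --- no monovariant uniform in $t$ --- and conclude that a genuinely new idea is required. That assessment is exactly the paper's position. One point worth sharpening: the flow chart of figure~\ref{fig:Tree} and the auxiliary-sequence apparatus of theorems~\ref{thm:AuxSeq} and~\ref{thm:Generalrk1} are statements about \emph{bundle} dynamics --- they track the shrinking interval $(p,p_n)$ on which the functional median is regular --- whereas the normal form of order $t$ is a single rational orbit with no spatial variable left. Proposition~\ref{prop:equivalenceNF} transports information \emph{from} the normal form \emph{to} the bundle near an X-point, not the reverse, so appealing to the auxiliary sequence to control the normal form is circular: the auxiliary sequence stabilises precisely when the corresponding normal form does. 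Your two suggested routes (induction on $t$ via the hierarchy, and a global complexity bound extending corollary~\ref{cor:effexp}) are reasonable, and your diagnosis of why each currently fails is accurate.
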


Our next aim is to derive lower bounds for the limit $m_t$ and the transit time $\tau_t$ of the normal form 
of order $t$. This will be achieved by establishing the existence of an initial phase of the orbit 
---the \textit{regular phase}--- whose length depends on $t$, and which admits
an explicit description (figure \ref{fig:haltingproblem55}).

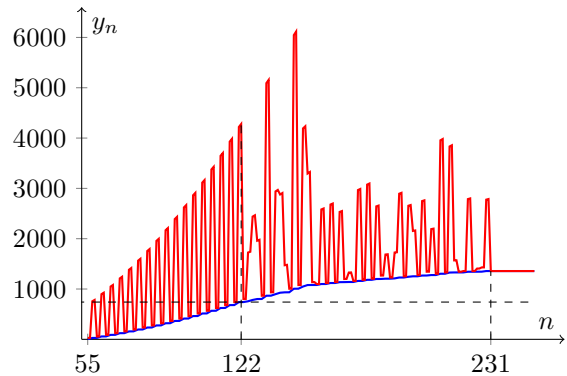
\begin{figure}
\centering
\begin{tikzpicture}
\begin{axis}[
	xmin=52.36486486,
	xmax=263.1756757,
	ymin=0,
	ymax=6600,
    xtick={55,122,231},
    ytick={1000,2000,3000,4000,5000,6000},
    yticklabels={1000,2000,3000,4000,5000,6000},
	axis lines=middle,
    x axis line style=->,
    y axis line style=->,
	samples=100,
	xlabel=$n$,
	ylabel=$y_n$,
	width=8cm,
    height=6cm
]

\draw[thick,color=blue] plot coordinates {(axis cs:55, 1.) (axis cs:56, 14.2500) (axis cs:57, 27.5000) (axis cs:58, 28.) (axis cs:59, 28.5000) (axis cs:60, 42.7500) (axis cs:61, 57.) (axis cs:62, 57.5000) (axis cs:63, 58.) (axis cs:64, 73.2500) (axis cs:65, 88.5000) (axis cs:66, 89.) (axis cs:67, 89.5000) (axis cs:68, 105.750) (axis cs:69, 122.) (axis cs:70, 122.500) (axis cs:71, 123.) (axis cs:72, 140.250) (axis cs:73, 157.500) (axis cs:74, 158.) (axis cs:75, 158.500) (axis cs:76, 176.750) (axis cs:77, 195.) (axis cs:78, 195.500) (axis cs:79, 196.) (axis cs:80, 215.250) (axis cs:81, 234.500) (axis cs:82, 235.) (axis cs:83, 235.500) (axis cs:84, 255.750) (axis cs:85, 276.) (axis cs:86, 276.500) (axis cs:87, 277.) (axis cs:88, 298.250) (axis cs:89, 319.500) (axis cs:90, 320.) (axis cs:91, 320.500) (axis cs:92, 342.750) (axis cs:93, 365.) (axis cs:94, 365.500) (axis cs:95, 366.) (axis cs:96, 389.250) (axis cs:97, 412.500) (axis cs:98, 413.) (axis cs:99, 413.500) (axis cs:100, 437.750) (axis cs:101, 462.) (axis cs:102, 462.500) (axis cs:103, 463.) (axis cs:104, 488.250) (axis cs:105, 513.500) (axis cs:106, 514.) (axis cs:107, 514.500) (axis cs:108, 540.750) (axis cs:109, 567.) (axis cs:110, 567.500) (axis cs:111, 568.) (axis cs:112, 595.250) (axis cs:113, 622.500) (axis cs:114, 623.) (axis cs:115, 623.500) (axis cs:116, 651.750) (axis cs:117, 680.) (axis cs:118, 680.500) (axis cs:119, 681.) (axis cs:120, 710.250) (axis cs:121, 739.500) (axis cs:122, 740.) (axis cs:123, 740.500) (axis cs:124, 748.375) (axis cs:125, 756.250) (axis cs:126, 769.500) (axis cs:127, 782.750) (axis cs:128, 791.875) (axis cs:129, 801.) (axis cs:130, 801.500) (axis cs:131, 802.) (axis cs:132, 834.250) (axis cs:133, 866.500) (axis cs:134, 867.) (axis cs:135, 867.500) (axis cs:136, 882.625) (axis cs:137, 897.750) (axis cs:138, 912.) (axis cs:139, 926.250) (axis cs:140, 930.125) (axis cs:141, 934.) (axis cs:142, 934.500) (axis cs:143, 935.) (axis cs:144, 970.250) (axis cs:145, 1005.50) (axis cs:146, 1006.) (axis cs:147, 1006.50) (axis cs:148, 1027.88) (axis cs:149, 1049.25) (axis cs:150, 1064.12) (axis cs:151, 1079.) (axis cs:152, 1079.38) (axis cs:153, 1079.75) (axis cs:154, 1079.88) (axis cs:155, 1080.) (axis cs:156, 1089.56) (axis cs:157, 1099.12) (axis cs:158, 1099.25) (axis cs:159, 1099.38) (axis cs:160, 1109.19) (axis cs:161, 1119.) (axis cs:162, 1119.12) (axis cs:163, 1119.25) (axis cs:164, 1127.81) (axis cs:165, 1136.38) (axis cs:166, 1136.75) (axis cs:167, 1137.12) (axis cs:168, 1138.25) (axis cs:169, 1139.38) (axis cs:170, 1139.50) (axis cs:171, 1139.62) (axis cs:172, 1150.19) (axis cs:173, 1160.75) (axis cs:174, 1160.88) (axis cs:175, 1161.) (axis cs:176, 1171.81) (axis cs:177, 1182.62) (axis cs:178, 1182.75) (axis cs:179, 1182.88) (axis cs:180, 1190.94) (axis cs:181, 1199.) (axis cs:182, 1199.38) (axis cs:183, 1199.75) (axis cs:184, 1202.38) (axis cs:185, 1205.) (axis cs:186, 1205.12) (axis cs:187, 1205.25) (axis cs:188, 1208.) (axis cs:189, 1210.75) (axis cs:190, 1219.56) (axis cs:191, 1228.38) (axis cs:192, 1228.50) (axis cs:193, 1228.62) (axis cs:194, 1235.94) (axis cs:195, 1243.25) (axis cs:196, 1247.88) (axis cs:197, 1252.50) (axis cs:198, 1252.62) (axis cs:199, 1252.75) (axis cs:200, 1260.19) (axis cs:201, 1267.62) (axis cs:202, 1268.) (axis cs:203, 1268.38) (axis cs:204, 1272.88) (axis cs:205, 1277.38) (axis cs:206, 1277.50) (axis cs:207, 1277.62) (axis cs:208, 1290.44) (axis cs:209, 1303.25) (axis cs:210, 1303.38) (axis cs:211, 1303.50) (axis cs:212, 1315.38) (axis cs:213, 1327.25) (axis cs:214, 1328.38) (axis cs:215, 1329.50) (axis cs:216, 1329.56) (axis cs:217, 1329.62) (axis cs:218, 1329.75) (axis cs:219, 1329.88) (axis cs:220, 1336.47) (axis cs:221, 1343.06) (axis cs:222, 1343.12) (axis cs:223, 1343.19) (axis cs:224, 1343.47) (axis cs:225, 1343.75) (axis cs:226, 1344.12) (axis cs:227, 1344.50) (axis cs:228, 1350.75) (axis cs:229, 1357.) (axis cs:230, 1357.) (axis cs:250,1357.)};

\draw[thick,color=red] plot coordinates {(axis cs:55, 27.5000) (axis cs:56, 28.5000) (axis cs:57, 756.250) (axis cs:58, 782.750) (axis cs:59, 57.) (axis cs:60, 58.) (axis cs:61, 897.750) (axis cs:62, 926.250) (axis cs:63, 88.5000) (axis cs:64, 89.5000) (axis cs:65, 1049.25) (axis cs:66, 1079.75) (axis cs:67, 122.) (axis cs:68, 123.) (axis cs:69, 1210.75) (axis cs:70, 1243.25) (axis cs:71, 157.500) (axis cs:72, 158.500) (axis cs:73, 1382.25) (axis cs:74, 1416.75) (axis cs:75, 195.) (axis cs:76, 196.) (axis cs:77, 1563.75) (axis cs:78, 1600.25) (axis cs:79, 234.500) (axis cs:80, 235.500) (axis cs:81, 1755.25) (axis cs:82, 1793.75) (axis cs:83, 276.) (axis cs:84, 277.) (axis cs:85, 1956.75) (axis cs:86, 1997.25) (axis cs:87, 319.500) (axis cs:88, 320.500) (axis cs:89, 2168.25) (axis cs:90, 2210.75) (axis cs:91, 365.) (axis cs:92, 366.) (axis cs:93, 2389.75) (axis cs:94, 2434.25) (axis cs:95, 412.500) (axis cs:96, 413.500) (axis cs:97, 2621.25) (axis cs:98, 2667.75) (axis cs:99, 462.) (axis cs:100, 463.) (axis cs:101, 2862.75) (axis cs:102, 2911.25) (axis cs:103, 513.500) (axis cs:104, 514.500) (axis cs:105, 3114.25) (axis cs:106, 3164.75) (axis cs:107, 567.) (axis cs:108, 568.) (axis cs:109, 3375.75) (axis cs:110, 3428.25) (axis cs:111, 622.500) (axis cs:112, 623.500) (axis cs:113, 3647.25) (axis cs:114, 3701.75) (axis cs:115, 680.) (axis cs:116, 681.) (axis cs:117, 3928.75) (axis cs:118, 3985.25) (axis cs:119, 739.500) (axis cs:120, 740.500) (axis cs:121, 4220.25) (axis cs:122, 4278.75) (axis cs:123, 801.) (axis cs:124, 802.) (axis cs:125, 1724.88) (axis cs:126, 1740.62) (axis cs:127, 2439.) (axis cs:128, 2465.50) (axis cs:129, 1959.88) (axis cs:130, 1978.12) (axis cs:131, 866.500) (axis cs:132, 867.500) (axis cs:133, 5091.25) (axis cs:134, 5155.75) (axis cs:135, 934.) (axis cs:136, 935.) (axis cs:137, 2939.62) (axis cs:138, 2969.88) (axis cs:139, 2878.50) (axis cs:140, 2907.) (axis cs:141, 1472.62) (axis cs:142, 1480.38) (axis cs:143, 1005.50) (axis cs:144, 1006.50) (axis cs:145, 6046.25) (axis cs:146, 6116.75) (axis cs:147, 1079.) (axis cs:148, 1080.) (axis cs:149, 4191.38) (axis cs:150, 4234.12) (axis cs:151, 3295.38) (axis cs:152, 3325.12) (axis cs:153, 1136.38) (axis cs:154, 1137.12) (axis cs:155, 1099.12) (axis cs:156, 1099.38) (axis cs:157, 2581.31) (axis cs:158, 2600.44) (axis cs:159, 1119.) (axis cs:160, 1119.25) (axis cs:161, 2679.19) (axis cs:162, 2698.81) (axis cs:163, 1139.38) (axis cs:164, 1139.62) (axis cs:165, 2532.06) (axis cs:166, 2549.19) (axis cs:167, 1199.) (axis cs:168, 1199.75) (axis cs:169, 1327.25) (axis cs:170, 1329.50) (axis cs:171, 1160.75) (axis cs:172, 1161.) (axis cs:173, 2966.94) (axis cs:174, 2988.06) (axis cs:175, 1182.62) (axis cs:176, 1182.88) (axis cs:177, 3074.81) (axis cs:178, 3096.44) (axis cs:179, 1205.) (axis cs:180, 1205.25) (axis cs:181, 2642.19) (axis cs:182, 2658.31) (axis cs:183, 1267.62) (axis cs:184, 1268.38) (axis cs:185, 1685.38) (axis cs:186, 1690.62) (axis cs:187, 1228.38) (axis cs:188, 1228.62) (axis cs:189, 1725.) (axis cs:190, 1730.50) (axis cs:191, 2893.94) (axis cs:192, 2911.56) (axis cs:193, 1252.50) (axis cs:194, 1252.75) (axis cs:195, 2654.56) (axis cs:196, 2669.19) (axis cs:197, 2154.38) (axis cs:198, 2163.62) (axis cs:199, 1277.38) (axis cs:200, 1277.62) (axis cs:201, 2747.69) (axis cs:202, 2762.56) (axis cs:203, 1343.75) (axis cs:204, 1344.50) (axis cs:205, 2190.88) (axis cs:206, 2199.88) (axis cs:207, 1303.25) (axis cs:208, 1303.50) (axis cs:209, 3955.44) (axis cs:210, 3981.06) (axis cs:211, 1329.62) (axis cs:212, 1329.88) (axis cs:213, 3832.88) (axis cs:214, 3856.62) (axis cs:215, 1569.12) (axis cs:216, 1571.38) (axis cs:217, 1343.06) (axis cs:218, 1343.19) (axis cs:219, 1357.) (axis cs:220, 1357.25) (axis cs:221, 2787.09) (axis cs:222, 2800.28) (axis cs:223, 1357.) (axis cs:224, 1357.12) (axis cs:225, 1406.47) (axis cs:226, 1407.03) (axis cs:227, 1428.88) (axis cs:228, 1429.62) (axis cs:229, 2775.75) (axis cs:230, 2788.25) (axis cs:231, 1357.) (axis cs:250,1357.)};

\draw[dashed] (axis cs:231,0)--(axis cs:231,1357.);
\draw[dashed] (axis cs:122,0)--(axis cs:122,4278.75);
\draw[dashed] (axis cs:50,740.5)--(axis cs:250,740.5);
\node[below] at (axis cs:143,0) {\scriptsize regular};
\end{axis}
\end{tikzpicture}
\caption{\label{fig:haltingproblem55}\small\rm The normal form orbit of order $55$ (red) 
and the corresponding median sequence (blue). 
The orbit behaves regularly up to $n=N_{55}=122$, by lemma 
\ref{lemma:haltingproblem}, which gives the explicit expressions for all terms up to this index. 
The horizontal dashed line represents the lower bound for the limit given 
by theorem \ref{thm:boundformandtau}, which is $740\frac{1}{2}$.}
\end{figure}

For this purpose, we first note that the terms of the normal form orbit can also be generated by a recursion analogous to \eqref{eq:pardeprecursion} which we shall now write down. Denote by $\lambda_n$ the core of the set $\gamma_n$, for every $n\geqslant t-1$. Then, for every $n\geqslant t+1$, we have
\begin{equation}\label{eq:newrecursion2}
y_n=\begin{cases}
y_{n-1}+\left(y_{j}-y_{i}\right)&n\text{ even, }\lambda_{n-2}=\left[y_i,y_j\right]\\
y_{n-1}+\frac{n}{2}\left(y_{k}-y_{j}\right)-\frac{n-2}{2}\left(y_{j}-y_{i}\right)&n\text{ odd, }\lambda_{n-2}=\left[y_i,y_j,y_k\right],
\end{cases}
\end{equation}
where the core elements are written in non-decreasing order. We shall also use the increasing sequence $\left(u_\ell\right)_{\ell=0}^\infty$ with
$$u_\ell:=\ell+1+\sqrt{5\ell^2+6\ell+5},\qquad\text{for every }\ell\in\mathbb{N}_0.$$
The regular phase is described by the following lemma.

\begin{lemma}\label{lemma:haltingproblem}
Let $t\geqslant 5$ be odd. Then
$$y_t=\frac{t}{2},\qquad y_{t+1}=\frac{t}{2}+1,\qquad y_{t+2}=\frac{t^2}{4},\qquad y_{t+3}=\frac{t^2}{4}+\frac{t}{2}-1,$$
with
\begin{equation}\label{eq:ordering1}
0<1<y_t<y_{t+1}<y_{t+2}<y_{t+3}.
\end{equation}
Moreover, for every $\ell\in\mathbb{N}$, if $t>u_\ell$ then
\begin{equation}\label{eq:RegularPhase}
\def\arraystretch{2.2}
\begin{array}{rcl}
y_{t+4\ell}&=&\displaystyle\frac{\ell+1}{2}t+\ell^2+\ell,\\
y_{t+4\ell+1}&=&\displaystyle\frac{\ell+1}{2}t+\ell^2+\ell+1,\\
y_{t+4\ell+2}&=&\displaystyle\frac{t^2}{4}+\frac{5}{2}\ell t+5\ell^2-\ell,\\
y_{t+4\ell+3}&=&\displaystyle\frac{t^2}{4}+\frac{5\ell+1}{2}t+5\ell^2+\ell-1,
\end{array}
\end{equation}
with
\begin{equation}\label{eq:ordering2}
y_{t+4\ell-3}<y_{t+4\ell}<y_{t+4\ell+1}<y_{t+2}\qquad\text{and}\qquad y_{t+4\ell-1}<y_{t+4\ell+2}<y_{t+4\ell+3}.
\end{equation}
\end{lemma}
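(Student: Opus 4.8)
\emph{Approach.} The statement is proved by an explicit computation for the four base terms $y_t,\dots,y_{t+3}$, followed by a strong induction on $\ell$ for the periodic block $y_{t+4\ell},\dots,y_{t+4\ell+3}$. At every step the two medians $\mathcal M_{n-1},\mathcal M_{n-2}$ needed to evaluate the recursion \eqref{eq:ReducedRecursion} (equivalently \eqref{eq:newrecursion2}) are read off from an explicit description of the sorted multiset $\gamma_n$, and the orderings \eqref{eq:ordering1}, \eqref{eq:ordering2} are exactly what keep this description valid as $n$ grows.

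\emph{Base cases.} From $\gamma_{t-1}=[0,1]$, $\mathcal M_{t-2}=0$, $\mathcal M_{t-1}=\tfrac12$ the recursion gives $y_t=\tfrac t2$; then $\gamma_t=[0,1,\tfrac t2]$, $\gamma_{t+1}=[0,1,\tfrac t2,\tfrac t2+1]$, $\gamma_{t+2}=[0,1,\tfrac t2,\tfrac t2+1,\tfrac{t^2}4]$ have medians $1$, $\tfrac{t+2}4$, $\tfrac t2$ respectively (for the last two one needs $\tfrac t2>1$ and $\tfrac{t^2}4>\tfrac t2+1$, i.e. $t^2-2t-4>0$, both true for $t\ge5$), and feeding these back into \eqref{eq:ReducedRecursion} yields the stated $y_{t+1},y_{t+2},y_{t+3}$ together with the chain \eqref{eq:ordering1}.

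\emph{Induction.} I would maintain the invariant that, throughout the regular phase, the sorted multiset $\gamma_n$ has the shape
$$0\ <\ 1\ <\ \bigl(\text{below-block}\bigr)\ <\ \tfrac{t^2}{4}\ <\ \bigl(\text{above-block}\bigr),$$
where the below-block lists the so-far-generated terms $y_{t},y_{t+1},y_{t+4},y_{t+5},\dots$ (the $y_{t+4m}$ and $y_{t+4m+1}$) in increasing order and the above-block lists $y_{t+3},y_{t+6},y_{t+7},y_{t+10},\dots$ (namely $y_{t+3}$ together with the later $y_{t+4m+2},y_{t+4m+3}$) in increasing order. Fix $\ell\ge1$ and $t>u_\ell$, and assume \eqref{eq:RegularPhase}–\eqref{eq:ordering2} for all smaller values, so the invariant holds for $n\le t+4\ell-1$. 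One then computes, in order, $y_{t+4\ell},y_{t+4\ell+1},y_{t+4\ell+2},y_{t+4\ell+3}$: at each sub-step the relevant medians are located by counting order statistics in the (known) sorted $\gamma_{n-1},\gamma_{n-2}$, and with $P:=\tfrac\ell2 t+\ell^2-\ell=y_{t+4\ell-4}$ they turn out to be $\mathcal M_{t+4\ell-2}=P$, $\mathcal M_{t+4\ell-1}=P+\tfrac12$, $\mathcal M_{t+4\ell}=P+1$, $\mathcal M_{t+4\ell+1}=P+\tfrac12+\tfrac{t+4\ell}4$; the algebra then collapses to $y_{t+4\ell}=P+\tfrac{t+4\ell}2$, $y_{t+4\ell+1}=y_{t+4\ell}+1$, $y_{t+4\ell+2}=P+\tfrac{(t+4\ell)^2}4$, $y_{t+4\ell+3}=y_{t+4\ell+2}+\tfrac{t+4\ell}2-1$, which are exactly \eqref{eq:RegularPhase}. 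After each new term one re-establishes the invariant via \eqref{eq:ordering2}: the inequalities $y_{t+4\ell-3}<y_{t+4\ell}$ (difference $\tfrac t2+2\ell-1$), $y_{t+4\ell}<y_{t+4\ell+1}$ (difference $1$), $y_{t+4\ell-1}<y_{t+4\ell+2}$ (difference $2t+8\ell-3$) and $y_{t+4\ell+2}<y_{t+4\ell+3}$ (difference $\tfrac t2+2\ell-1$) are polynomial identities valid for all $t\ge1$, whereas the single genuine inequality $y_{t+4\ell+1}<\tfrac{t^2}{4}$ — which is what guarantees the two new below-terms really land below $\tfrac{t^2}{4}$ rather than disrupting the block structure — reads $t^2-2(\ell+1)t-(4\ell^2+4\ell+4)>0$, whose larger root is precisely $\ell+1+\sqrt{5\ell^2+6\ell+5}=u_\ell$. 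This is exactly why the hypothesis $t>u_\ell$ appears.

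\emph{Main obstacle.} The substantive difficulty is organisational rather than conceptual: the position-counting for the median must be carried out separately in the four sub-steps $n\equiv t,t+1,t+2,t+3\pmod 4$, respecting the odd/even-size distinction in the definition of $\mathcal M$, and one must keep exact track of how many below- and above-block terms have been produced at each instant so that the correct order statistic is selected. Once the sorted form of $\gamma_n$ is available as an invariant, each of the four per-period computations is a short linear–quadratic simplification; the only point at which the argument could fail, and the only point at which $t>u_\ell$ is used, is the verification that the block structure is preserved.
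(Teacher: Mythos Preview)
Your proposal is correct and follows essentially the same approach as the paper: direct computation of $y_t,\dots,y_{t+3}$, followed by strong induction on $\ell$ in which the relevant order statistics (cores/medians) of $\gamma_n$ are identified from the running chain of inequalities, the recursion is applied, and the chain is extended. The paper phrases the induction via the core-based recursion \eqref{eq:newrecursion2} and writes out the chain \eqref{eq:ordering7}--\eqref{eq:ordering8} explicitly, while you package the same information as a ``below-block / $y_{t+2}$ / above-block'' invariant and compute with the shifted variable $P=y_{t+4\ell-4}$; both routes isolate $y_{t+4\ell+1}<y_{t+2}$ as the unique inequality equivalent to $t>u_\ell$.
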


\begin{proof}
The proof involves straightforward applications of \eqref{eq:newrecursion2}; we only outline the main steps. 
To prove the first assertion, we begin with $\gamma_{t-1}=[0,1]$.
Then \eqref{eq:ReducedRecursion} gives $y_t=\frac{t}{2}$ and $\gamma_t=\left[0,1,\frac{t}{2}\right]$,
and since $t\geqslant 5$, we find that $0<1<y_t$.
Next we compute $y_{t+1}$, $y_{t+2}$, $y_{t+3}$ recursively, using \eqref{eq:newrecursion2};
the condition $t\geqslant 5$ ensures that the chain of inequalities \eqref{eq:ordering1} is
satisfied at each stage, as easily verified. This proves the first assertion.

The second assertion is proved by strong induction on $\ell\in\mathbb{N}$. 
The basis for induction is the statement that if $t\geqslant 7$ then
$$y_{t+4}=t+2,\qquad y_{t+5}=t+3,\qquad y_{t+6}=\frac{t^2}{4}+\frac{5}{2}t+4,\qquad y_{t+7}=\frac{t^2}{4}+3t+5,$$
with
\begin{equation}\label{eq:ordering4}
y_{t+1}<y_{t+4}<y_{t+5}<y_{t+2}\qquad\text{and}\qquad y_{t+3}<y_{t+6}<y_{t+7}.
\end{equation}
Starting with $\gamma_{t+3}=\left[0,1,y_t,y_{t+1},y_{t+2},y_{t+3}\right]$ and the 
inequalities \eqref{eq:ordering1}, we now apply \eqref{eq:newrecursion2} for $n\in\{t+4,t+5,t+6,t+7\}$. 
The condition $t\geqslant 7$ prescribes uniquely the position in the sequence 
at which the numbers $y_{t+4}$ and $y_{t+5}$ must be inserted, as easily verified. 
We obtain the set $\gamma_{t+7}$ whose terms satisfy the combination of 
\eqref{eq:ordering1} and \eqref{eq:ordering4}, namely,
\begin{equation}\label{eq:ordering6}
0<1<y_t<y_{t+1}<y_{t+4}<y_{t+5}<y_{t+2}<y_{t+3}<y_{t+6}<y_{t+7}.
\end{equation}
The basis is proved.

Now suppose that the statements \eqref{eq:RegularPhase} and \eqref{eq:ordering2} hold for every 
$\ell\in\{1,2,\ldots,r-1\}$, for some $r\geqslant 2$. 
We prove that they also hold for $\ell=r$, assuming that $t>u_r$. 
Since $\left(u_\ell\right)_{\ell=0}^\infty$ is increasing, this implies that 
$t>u_\ell$ for every $\ell\in\{1,2,\ldots,r-1\}$, and hence our iteration 
proceeds up to and including the even-indexed term $y_{t+4r-1}$, 
while \eqref{eq:ordering6} extends to 
\begin{equation}\label{eq:ordering7}\def\arraystretch{1.3}
\begin{array}{c}
0<1<y_t<y_{t+1}<y_{t+4}<y_{t+5}<\cdots<y_{t+4r-4}<y_{t+4r-3}<y_{t+2}\\
<y_{t+3}<y_{t+6}<y_{t+7}<\cdots<y_{t+4r-2}<y_{t+4r-1}.\\
\end{array}
\end{equation}
From the inductive hypothesis, we obtain the term
$$y_{t+4r-1}=\frac{t^2}{4}+\frac{5r-4}{2}t+5r^2-9r+3$$ 
as well as the odd core
\begin{eqnarray*}
\lambda_{t+4r-2}&=&\left[y_{t+4r-7},y_{t+4r-4},y_{t+4r-3}\right]\\
  &=&\left[\frac{r-1}{2}t+r^2-3r+3,\frac{r}{2}t+r^2-r,\frac{r}{2}t+r^2-r+1\right]
\end{eqnarray*}
which contains the three central elements in \eqref{eq:ordering7} before $y_{t+4r-1}$ appears.
Applying \eqref{eq:newrecursion2}, we obtain
\begin{equation}\label{eq:yt+4r}
y_{t+4r}=\frac{r+1}{2}t+r^2+r.
\end{equation}
Using the above and the even core
$$\lambda_{t+4r-1}=\left[y_{t+4r-4},y_{t+4r-3}\right]=\left[\frac{r}{2}t+r^2-r,\frac{r}{2}t+r^2-r+1\right]
$$
which contains the two central elements in \eqref{eq:ordering7}, 
a further application of \eqref{eq:newrecursion2} yields 
\begin{equation}\label{eq:yt+4r+1}
y_{t+4r+1}=\frac{r+1}{2}t+r^2+r+1.
\end{equation}
The elements \eqref{eq:yt+4r} and \eqref{eq:yt+4r+1} are now inserted into the chain of 
inequalities \eqref{eq:ordering7} at the positions uniquely determined by
the assumption $t>u_r$. 
Finally, we apply \eqref{eq:newrecursion2} twice more, to obtain
$$y_{t+4r+2}=\frac{t^2}{4}+\frac{5}{2}rt+5r^2-r\quad\text{and}\quad y_{t+4r+3}=\frac{t^2}{4}+\frac{5r+1}{2}t+5r^2+r-1$$
whose positions in the chain are again determined. 
Our set now is $\gamma_{t+4r+3}$ with the corresponding chain of inequalities
\begin{equation}\label{eq:ordering8}\def\arraystretch{1.3}
\begin{array}{c}
0<1<y_t<y_{t+1}<y_{t+4}<y_{t+5}<\cdots<y_{t+4r-4}<y_{t+4r-3}\\
<y_{t+4r}<y_{t+4r+1}<y_{t+2}<y_{t+3}<y_{t+6}<y_{t+7}<\cdots\\
<y_{t+4r-2}<y_{t+4r-1}<y_{t+4r+2}<y_{t+4r+3}.
\end{array}
\end{equation}
which agrees with \eqref{eq:ordering2} for $\ell=r$. The lemma is proved.
\end{proof}

Lemma \ref{lemma:haltingproblem} gives the 
explicit formula of every term of the orbit up to that of even index
$$
N_t:=t+4L_t+3,
$$
where
$$
L_t:=\max\left\{\ell\in\mathbb{N}_0:t>u_\ell\right\}
=\left\lceil \frac{-t-2+\sqrt{5t^2-4t-12}}{4}\right\rceil-1,
$$
along with the associated chain of inequalities.
These terms are all distinct because each inequality is strict.
Since the next two terms $y_{N_t+1}$ and $y_{N_t+2}$ also obey 
the formulas given in lemma \ref{lemma:haltingproblem}, we find
that for every odd integer $t\geqslant 5$,
\begin{equation}\label{eq:RegularPhaseAdd}
y_{N_t+1}=\frac{L_t+2}{2}t+{L_t}^2+3L_t+2
 \quad\text{and}\quad 
y_{N_t+2}=\frac{L_t+2}{2}t+{L_t}^2+3L_t+3.
\end{equation}
Although certainly $y_{N_t+1}<y_{N_t+2}$, we do not make any statement 
regarding the positions of these two terms in the chain of inequalities, which 
may depend on $t$.

Since stabilisation has not occurred at time step $N_t+2$, we have established 
a lower bound for the transit time $\tau_t$. 
From \eqref{eq:ordering8} we know that 
$\lambda_{N_t}=\left[y_{t+4L_t},y_{t+4L_t+1}\right]$, and hence
$m_t\geqslant y_{t+4L_t+1}$, since the median sequence is non-decreasing. 
We have established the following bounds:

\begin{theorem}\label{thm:boundformandtau}
For every odd $t\geqslant 5$ we have
\begin{eqnarray*}
m_t&\geqslant&\frac{L_t+1}{2}t+{L_t}^2+L_t+1\,\sim\, \frac{t^2}{4}\\
\tau_t&\geqslant& N_t+2\,\sim\, t\sqrt{5}.
\end{eqnarray*}
\end{theorem}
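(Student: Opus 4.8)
The plan is to read off both inequalities directly from lemma~\ref{lemma:haltingproblem} and its immediate consequences, which have already done all the combinatorial work. The only genuinely new content of theorem~\ref{thm:boundformandtau} is the translation of the explicit formulas into bounds and then the asymptotic estimates, so the proof is short.

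First I would record the transit-time bound. Lemma~\ref{lemma:haltingproblem} provides explicit (in particular, finite and well-defined) values $y_{t+4\ell}$, $y_{t+4\ell+1}$, $y_{t+4\ell+2}$, $y_{t+4\ell+3}$ for all $\ell\in\mathbb{N}$ with $t>u_\ell$, hence for all $\ell\le L_t$; together with the base case these cover every index up to $N_t=t+4L_t+3$, and \eqref{eq:RegularPhaseAdd} extends this to $N_t+1$ and $N_t+2$. Since all the inequalities in \eqref{eq:ordering1}, \eqref{eq:ordering2}, \eqref{eq:ordering8} are strict, the terms $y_t,y_{t+1},\dots,y_{N_t+2}$ are pairwise distinct, so the normal form orbit has not stabilised by time $N_t+2$; therefore $\tau_t\ge N_t+2$. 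For the asymptotics, from the closed form $L_t=\bigl\lceil(-t-2+\sqrt{5t^2-4t-12})/4\bigr\rceil-1$ one gets $L_t\sim(\sqrt5-1)t/4$ as $t\to\infty$, whence $N_t+2 = t+4L_t+5 \sim t+(\sqrt5-1)t = \sqrt5\,t$, which is the claimed estimate $\tau_t\gtrsim t\sqrt5$.

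Next the bound on $m_t$. By \eqref{eq:ordering8} (with $\ell=L_t$) the core at step $N_t$ is $\lambda_{N_t}=\left[y_{t+4L_t},y_{t+4L_t+1}\right]$, and since the normal form median sequence is non-decreasing (noted after \eqref{eq:ReducedRecursion}), the limit $m_t$, if it exists, satisfies $m_t\ge\mathcal{M}_{N_t}=\left\langle y_{t+4L_t},y_{t+4L_t+1}\right\rangle\ge y_{t+4L_t}$; even when $m_t=\infty$ the inequality is trivially true. Substituting the value $y_{t+4L_t}=\frac{L_t+1}{2}t+L_t^2+L_t$ from \eqref{eq:RegularPhase} gives $m_t\ge\frac{L_t+1}{2}t+L_t^2+L_t+1$ — one can in fact use the slightly larger $y_{t+4L_t+1}$, but the stated bound suffices. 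Finally, inserting $L_t\sim(\sqrt5-1)t/4$ into $\frac{L_t+1}{2}t+L_t^2+L_t$ yields a leading term $\frac{(\sqrt5-1)}{8}t^2+\frac{(\sqrt5-1)^2}{16}t^2 = \frac{t^2}{4}$ (the coefficients $\frac{\sqrt5-1}{8}+\frac{6-2\sqrt5}{16}=\frac14$ combine exactly), so $m_t\sim t^2/4$.

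The only step requiring any care is the asymptotic analysis of $L_t$: one must check that the ceiling in its definition does not disturb the leading-order behaviour (it changes $L_t$ by $O(1)$, hence is negligible against $L_t\asymp t$), and that the two quadratic-in-$t$ contributions to $m_t$ add up to the clean constant $\tfrac14$. Everything else is a direct quotation of lemma~\ref{lemma:haltingproblem}, the non-decreasing property of the median sequence, and the strictness of the chain of inequalities; I expect no real obstacle beyond bookkeeping.
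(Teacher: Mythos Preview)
Your approach is essentially identical to the paper's: both read the transit-time bound off the strict chain of inequalities in lemma~\ref{lemma:haltingproblem} (extended two steps by \eqref{eq:RegularPhaseAdd}), identify the core $\lambda_{N_t}=[y_{t+4L_t},y_{t+4L_t+1}]$ from \eqref{eq:ordering8}, and use monotonicity of the median sequence for the $m_t$ bound; the asymptotics via $L_t\sim(\sqrt5-1)t/4$ are handled the same way.

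There is one small slip worth flagging. Your chain $m_t\ge\mathcal{M}_{N_t}=\langle y_{t+4L_t},y_{t+4L_t+1}\rangle\ge y_{t+4L_t}$ only yields $m_t\ge\frac{L_t+1}{2}t+L_t^2+L_t$, which is one short of the stated bound; the ``$+1$'' you write after substituting is not justified by that inequality. The paper instead asserts $m_t\ge y_{t+4L_t+1}$ directly: since $y_{N_t+1}>y_{t+4L_t+1}$ (by lemma~\ref{lemma:translation}, as the two core elements are distinct), the next median $\mathcal{M}_{N_t+1}$ equals $y_{t+4L_t+1}$, and monotonicity gives the bound with the correct constant. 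You even remark that one ``can in fact use the slightly larger $y_{t+4L_t+1}$''---you must, to match the theorem as stated. Also, for $\tau_t$ you claim pairwise distinctness of all terms through $y_{N_t+2}$, but the chain \eqref{eq:ordering8} does not place $y_{N_t+1},y_{N_t+2}$; it is cleaner (and sufficient) to check directly from the formulas that $y_{N_t+1}\neq\mathcal{M}_{N_t}$ and $y_{N_t+2}\neq\mathcal{M}_{N_t+1}$, which is what ``stabilisation has not occurred'' actually requires.
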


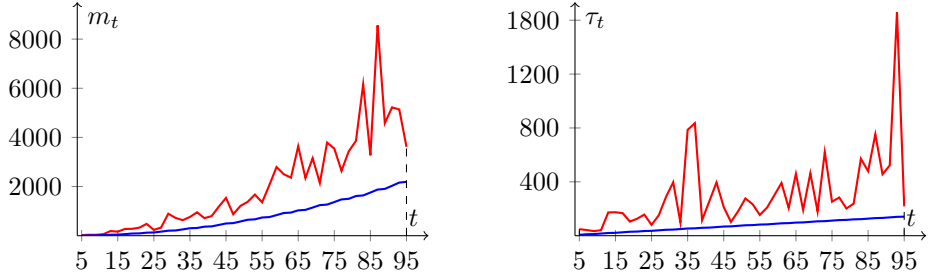
\begin{figure}[t]
\centering
\begin{tabular}{ccc}
\begin{tikzpicture}
\begin{axis}[
	xmin=3.783783784,
	xmax=101.0810811,
	ymin=0,
	ymax=9500,
    xtick={5,15,25,35,45,55,65,75,85,95},
    ytick={2000,4000,6000,8000},
    yticklabels={2000,4000,6000,8000},
	axis lines=middle,
    x axis line style=->,
    y axis line style=->,
	samples=100,
	xlabel=$t$,
	ylabel=$m_t$,
	width=12.5/16*8cm,
    height=12.5/16*6cm
]

\draw[thick,color=red] plot coordinates {(axis cs:5, 20.312) (axis cs:7, 38.500) (axis cs:9, 36.500) (axis cs:11, 61.750) (axis cs:13, 188.88) (axis cs:15, 165.12) (axis cs:17, 273.81) (axis cs:19, 279.97) (axis cs:21, 327.62) (axis cs:23, 481.38) (axis cs:25, 236.25) (axis cs:27, 330.62) (axis cs:29, 894.14) (axis cs:31, 724.56) (axis cs:33, 632.75) (axis cs:35, 762.98) (axis cs:37, 952.91) (axis cs:39, 714.75) (axis cs:41, 795.25) (axis cs:43, 1183.0) (axis cs:45, 1537.5) (axis cs:47, 871.) (axis cs:49, 1212.8) (axis cs:51, 1378.3) (axis cs:53, 1673.1) (axis cs:55, 1357.) (axis cs:57, 2059.1) (axis cs:59, 2794.) (axis cs:61, 2502.5) (axis cs:63, 2362.4) (axis cs:65, 3641.7) (axis cs:67, 2348.5) (axis cs:69, 3153.1) (axis cs:71, 2146.8) (axis cs:73, 3789.2) (axis cs:75, 3547.6) (axis cs:77, 2640.1) (axis cs:79, 3433.5) (axis cs:81, 3858.8) (axis cs:83, 6160.8) (axis cs:85, 3271.3) (axis cs:87, 8561.6) (axis cs:89, 4582.4) (axis cs:91, 5223.1) (axis cs:93, 5137.6) (axis cs:95, 3616.2)};

\draw[thick,color=blue] plot coordinates {(axis cs:5, 3.5000) (axis cs:7, 10.) (axis cs:9, 12.) (axis cs:11, 23.500) (axis cs:13, 39.) (axis cs:15, 43.) (axis cs:17, 63.500) (axis cs:19, 88.) (axis cs:21, 94.) (axis cs:23, 123.50) (axis cs:25, 130.50) (axis cs:27, 165.) (axis cs:29, 203.50) (axis cs:31, 212.50) (axis cs:33, 256.) (axis cs:35, 303.50) (axis cs:37, 314.50) (axis cs:39, 367.) (axis cs:41, 379.) (axis cs:43, 436.50) (axis cs:45, 498.) (axis cs:47, 512.) (axis cs:49, 578.50) (axis cs:51, 649.) (axis cs:53, 665.) (axis cs:55, 740.50) (axis cs:57, 757.50) (axis cs:59, 838.) (axis cs:61, 922.50) (axis cs:63, 941.50) (axis cs:65, 1031.) (axis cs:67, 1051.) (axis cs:69, 1145.5) (axis cs:71, 1244.) (axis cs:73, 1266.) (axis cs:75, 1369.5) (axis cs:77, 1477.) (axis cs:79, 1501.) (axis cs:81, 1613.5) (axis cs:83, 1638.5) (axis cs:85, 1756.) (axis cs:87, 1877.5) (axis cs:89, 1904.5) (axis cs:91, 2031.) (axis cs:93, 2161.5) (axis cs:95, 2190.5)};

\draw[dashed] (axis cs:5,0)--(axis cs:5, 20.312);
\draw[dashed] (axis cs:95,0)--(axis cs:95, 3616.2);
\end{axis}
\end{tikzpicture}&\phantom{a}&\begin{tikzpicture}
\begin{axis}[
	xmin=3.783783784,
	xmax=101.0810811,
	ymin=0,
	ymax=2600,
    xtick={5,15,25,35,45,55,65,75,85,95},
    ytick={600,1200,1800,2400},
    yticklabels={400,800,1200,1800,2400},
	axis lines=middle,
    x axis line style=->,
    y axis line style=->,
	samples=100,
	xlabel=$t$,
	ylabel=$\tau_t$,
	width=12.5/16*8cm,
    height=12.5/16*6cm
]

\draw[thick,color=red] plot coordinates {(axis cs:5, 73) (axis cs:7, 63) (axis cs:9, 51) (axis cs:11, 61) (axis cs:13, 259) (axis cs:15, 261) (axis cs:17, 251) (axis cs:19, 157) (axis cs:21, 189) (axis cs:23, 235) (axis cs:25, 121) (axis cs:27, 227) (axis cs:29, 431) (axis cs:31, 595) (axis cs:33, 147) (axis cs:35, 1179) (axis cs:37, 1253) (axis cs:39, 173) (axis cs:41, 383) (axis cs:43, 593) (axis cs:45, 319) (axis cs:47, 153) (axis cs:49, 273) (axis cs:51, 415) (axis cs:53, 351) (axis cs:55, 231) (axis cs:57, 313) (axis cs:59, 449) (axis cs:61, 587) (axis cs:63, 309) (axis cs:65, 689) (axis cs:67, 287) (axis cs:69, 695) (axis cs:71, 271) (axis cs:73, 931) (axis cs:75, 379) (axis cs:77, 425) (axis cs:79, 303) (axis cs:81, 357) (axis cs:83, 857) (axis cs:85, 719) (axis cs:87, 1129) (axis cs:89, 685) (axis cs:91, 783) (axis cs:93, 2491) (axis cs:95, 327)};

\draw[thick,color=blue] plot coordinates {(axis cs:5, 10) (axis cs:7, 16) (axis cs:9, 18) (axis cs:11, 24) (axis cs:13, 30) (axis cs:15, 32) (axis cs:17, 38) (axis cs:19, 44) (axis cs:21, 46) (axis cs:23, 52) (axis cs:25, 54) (axis cs:27, 60) (axis cs:29, 66) (axis cs:31, 68) (axis cs:33, 74) (axis cs:35, 80) (axis cs:37, 82) (axis cs:39, 88) (axis cs:41, 90) (axis cs:43, 96) (axis cs:45, 102) (axis cs:47, 104) (axis cs:49, 110) (axis cs:51, 116) (axis cs:53, 118) (axis cs:55, 124) (axis cs:57, 126) (axis cs:59, 132) (axis cs:61, 138) (axis cs:63, 140) (axis cs:65, 146) (axis cs:67, 148) (axis cs:69, 154) (axis cs:71, 160) (axis cs:73, 162) (axis cs:75, 168) (axis cs:77, 174) (axis cs:79, 176) (axis cs:81, 182) (axis cs:83, 184) (axis cs:85, 190) (axis cs:87, 196) (axis cs:89, 198) (axis cs:91, 204) (axis cs:93, 210) (axis cs:95, 212)};

\draw[dashed] (axis cs:5,0)--(axis cs:5,73);
\draw[dashed] (axis cs:95,0)--(axis cs:95, 327);
\end{axis}
\end{tikzpicture}
\end{tabular}
\caption{\label{fig:Nt}\rm\small Plots of $m_t$ (left) and $\tau_t$ (right) for $t\in\{5,7,\ldots,95\}$ with the respective lower bounds given by theorem \ref{thm:boundformandtau}.}
\end{figure}

The plots of $m_t$ and $\tau_t$ for $t\in\{5,7,\ldots,95\}$ with the respective lower 
bounds are shown in figure \ref{fig:Nt}. 
All available evidence suggests that, for sufficiently large $t$ the limit $m_t$ is 
also bounded above by the value of the largest term in the regular phase.

\begin{conjecture}\label{thm:conjecturemt}
For every odd integer $t\geqslant 9$, we have $m_t< y_{N_t}$.
\end{conjecture}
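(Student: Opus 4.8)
Because Conjecture~\ref{thm:conjecturemt} is open, the following is a strategy rather than a complete proof; it must genuinely exploit $t\geqslant 9$, since the inequality \emph{fails} at $t=5,7$ (where $m_5=\tfrac{325}{16}>\tfrac{31}{4}=y_{N_5}$ and, more tightly, $m_7=\tfrac{77}{2}>\tfrac{153}{4}=y_{N_7}$), so any argument can carry very little slack near the bottom of the admissible range.

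\textbf{Setup from the regular phase.} By Lemma~\ref{lemma:haltingproblem} together with \eqref{eq:RegularPhaseAdd} the configuration $\gamma_{N_t+2}$ is completely explicit. The set $\gamma_{N_t}$ is the disjoint union of a \emph{small block}
$$S:=\bigl\{0,1\bigr\}\cup\bigl\{y_{t+4k},\,y_{t+4k+1}:0\leqslant k\leqslant L_t\bigr\}$$
and a \emph{large block}
$$B:=\bigl\{y_{t+4k+2},\,y_{t+4k+3}:0\leqslant k\leqslant L_t\bigr\},$$
arranged as in \eqref{eq:ordering8}: every element of $S$ lies below every element of $B$, $\min B=y_{t+2}=t^2/4$, $\max B=y_{N_t}$, and the core is $\lambda_{N_t}=\bigl[y_{t+4L_t},y_{t+4L_t+1}\bigr]$, so $\mathcal M_{N_t}=y_{t+4L_t}+\tfrac12<\min B$. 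Thus at step $N_t$ every element of $B$ lies strictly above the median, and the statement to be proved is that the (non-decreasing) median never reaches $\max B=y_{N_t}$; equivalently, by \eqref{eq:msum}, that $\sum_{n>N_t}\bigl(\mathcal M_n-\mathcal M_{n-1}\bigr)<y_{N_t}-\mathcal M_{N_t}=\tfrac{t^2}{4}+2L_tt+4L_t^2-\tfrac32$.

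\textbf{Main line of attack.} I would run a strong induction on $n\geqslant N_t$, propagating via the core recursion \eqref{eq:newrecursion2} (equivalently $y_{n+1}=\mathcal M_n+n(\mathcal M_n-\mathcal M_{n-1})$), and maintaining three invariants: (i) $\mathcal M_n<y_{N_t}$; (ii) strictly fewer than $\tfrac12|\gamma_n|$ elements of $\gamma_n$ are $\geqslant y_{N_t}$, so $y_{N_t}$ still sits above the core; and (iii) the part of $B$ lying below $y_{N_t}$ stays \emph{frozen above the median}, contributing no core elements. The point of (iii) is that a newly created term $y_{n+1}$ can only exceed $y_{N_t}$ when $\mathcal M_n$ is already within $O((y_{N_t}-\mathcal M_n)/n)$ of $y_{N_t}$, or when the last median increment is of that order; in either case one must show the extra term joins the frozen top of $B$ without pulling the core up to $y_{N_t}$. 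Counting through (ii) and (iii), the median is then confined to the band $[\mathcal M_{N_t},\,y_{N_t})$, which is exactly the claim. The asymptotics $\mathcal M_{N_t}\sim t^2/4$ against $y_{N_t}\sim\tfrac32 t^2$ (cf.\ Theorem~\ref{thm:boundformandtau}) leave a ratio that widens with $t$ but is only $\approx 1.4$ at $t=9$, which is why the conjecture is essentially sharp there.

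\textbf{The obstacle, and a fallback.} The difficulty is that past step $N_t+2$ the orbit has no visible monotone structure (Figure~\ref{fig:haltingproblem55} shows the median stalling on long plateaus punctuated by irregular jumps), and invariant (iii) is precisely the sort of ``no unwanted interaction with earlier functions'' hypothesis that, throughout Sections~\ref{section:X-points}--\ref{section:ReducedSystem}, is \emph{imposed} rather than derived; a bound of the shape $m_t<y_{N_t}$ is strictly stronger than mere stabilisation of the normal form orbit, which is itself conjectural. The most promising route I see is a \emph{self-similar reduction}: to show that, after discarding the frozen top of $B$, the configuration $\gamma_{N_t+2}$ is equivalent in the sense of \eqref{eq:Equivalence} to a rescaled copy of a normal form orbit of smaller odd order $t'<t$, and then to induct on $t$ --- the hierarchical organisation of rank-$1$ X-points in Theorems~\ref{thm:AuxSeq} and~\ref{thm:Generalrk1} is the geometric shadow of exactly such a recursion. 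The crux would be verifying that $m_{t'}<y_{N_{t'}}$ for the reduced system, once the frozen block is reinstated, still yields $m_t<y_{N_t}$; the generous asymptotic margin makes this compatibility plausible, but establishing it --- and dealing by hand with the finitely many small $t$ where the margin is thin --- is where I expect the real work to lie. A cruder fallback is to first obtain \emph{some} explicit a priori bound $m_t<Ct^2$ by a Lyapunov-type estimate on $(\mathcal M_n)_{n\geqslant N_t}$ and then close the gap to $\tfrac32 t^2$ by direct computation for small $t$; but producing even one polynomial a priori bound on $m_t$ is, at present, out of reach.
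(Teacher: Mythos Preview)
The paper does not prove this statement: it is presented as a conjecture, motivated solely by the numerical data in figure~\ref{fig:Nt} and the remark that ``all available evidence suggests'' it. You correctly identify this and frame your proposal as a strategy, not a proof. Your computation of the counterexamples at $t=5,7$ is accurate and is a useful addition that the paper does not spell out; your explicit description of $\gamma_{N_t}$ as the union of the small block $S$ and large block $B$, and of the gap $y_{N_t}-\mathcal{M}_{N_t}$, is also correct.

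As for the strategy itself: the induction on $n$ with invariants (i)--(iii) is natural, but you are right that (iii) is precisely the kind of non-interaction hypothesis that the paper never manages to derive. Your self-similar reduction idea --- finding an equivalence between the post-regular configuration and a normal form of smaller order $t'$ --- is appealing, but there is no hint in the paper (or in the numerics of figure~\ref{fig:haltingproblem55}) that such an exact equivalence exists after the regular phase ends; the orbit there appears genuinely aperiodic in $t$. The fallback of proving any polynomial bound $m_t<Ct^2$ is indeed the more honest target, and is already open. In short, your assessment of the difficulty is accurate, and nothing in the paper goes beyond what you have written.
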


We conclude this section with some remarks on the arithmetic of normal form orbits.
Given an odd integer $t\geqslant 5$, we are interested in the smallest ring $\mathbb{K}\subset\mathbb{R}$ which contains the whole normal form orbit of order $t$.
Because \eqref{eq:ReducedRecursion} only produces rational numbers whose denominators 
are powers of $2$, it is clear that $\mathbb{K}\subset\mathbb{Z}\left[\frac{1}{2}\right]$
(the set of rationals whose denominator is a power of $2$). 
With this in mind, we recall that the $2$-\textit{adic valuation} $\nu_2:\mathbb{N}\to\mathbb{N}_0$ 
is given by \cite[page 562]{HardyWright}
$$\nu_2(b):=\max\left\{i\in\mathbb{N}_0:2^i\mid b\right\}$$
for every $b\in\mathbb{N}$.

For every $n\geqslant t$, we define the \textit{$n$-th effective exponent}\footnote{Notice that the sequence $(\kappa(n))_{n=t}^\infty$ is non-decreasing.} as the maximum $2$-adic value of the denominators of the iterates $y_t$, \ldots, $y_n$ in lowest terms, so that
$$\gamma_n\subset \frac{1}{2^{\kappa(n)}}\mathbb{Z}.$$
The $2$-adic value of the denominator of the median $\mathcal{M}_n$ is at most 
$\kappa(n)$ if $n$ is odd and at most $\kappa(n)+1$ if $n$ is even. 
Since $\kappa(t)=\kappa(t+1)=1$, an easy induction shows that for 
every $n\geqslant t$ we have \cite[proposition 2.5]{Hoseana}
$$
\kappa(n)\leqslant \left\lfloor\frac{n-t+2}{2}\right\rfloor.
$$

The above bound is very crude. 
Due to sustained cancellations, the actual exponent is much smaller, 
but its determination seems problematic. 
However, an exact expression for the effective exponent is available during the regular 
phase of the orbits. 
Since $t$ is odd, we can see in lemma \ref{lemma:haltingproblem} 
and equation \eqref{eq:RegularPhaseAdd} that the $2$-adic value of the denominator of 
each number in $\gamma_{N_t+2}$ is the maximum of the $2$-adic values of the 
denominators of the coefficients in its explicit formula. 
Since these denominators are at most $4$, we have:

\begin{corollary}\label{cor:effexp}
For every odd integer $t\geqslant 7$, we have $\kappa\left(N_t+2\right)=2$.
\end{corollary}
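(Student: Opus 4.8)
The plan is to read off the denominators directly from the closed-form expressions already established in Lemma \ref{lemma:haltingproblem} and equation \eqref{eq:RegularPhaseAdd}, together with the definition of the effective exponent $\kappa$. First I would recall that $\kappa(N_t+2)$ is, by definition, the largest $2$-adic valuation of the denominators (in lowest terms) of the numbers $y_t,\ldots,y_{N_t+2}$, equivalently $\gamma_{N_t+2}\subset\frac{1}{2^{\kappa(N_t+2)}}\mathbb{Z}$; and that $\kappa(t)=\kappa(t+1)=1$, so $\kappa(N_t+2)\geqslant 1$ for every odd $t\geqslant 5$, with the sequence $(\kappa(n))$ non-decreasing.

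Next I would establish the upper bound $\kappa(N_t+2)\leqslant 2$ by inspecting every formula for $y_n$ with $t\leqslant n\leqslant N_t+2$. For $n\in\{t,t+1,t+2,t+3\}$ the values are $\frac{t}{2}$, $\frac{t}{2}+1$, $\frac{t^2}{4}$, $\frac{t^2}{4}+\frac{t}{2}-1$; since $t$ is odd, $\frac{t}{2}$ has denominator exactly $2$ and $\frac{t^2}{4}$ has denominator exactly $4$ in lowest terms, so the relevant valuations are $1$ and $2$. For $n=t+4\ell$, $t+4\ell+1$ (with $1\leqslant\ell\leqslant L_t$) the formulas $\frac{\ell+1}{2}t+\ell^2+\ell$ and $\frac{\ell+1}{2}t+\ell^2+\ell+1$ are half-integers or integers, hence denominator dividing $2$; for $n=t+4\ell+2$, $t+4\ell+3$ (with $1\leqslant\ell\leqslant L_t$, so that these indices reach up to $N_t+3$, covering in particular $N_t=t+4L_t+3$ and, via \eqref{eq:RegularPhaseAdd}, the indices $N_t+1$ and $N_t+2$) the coefficient of $t$ is $\frac{5}{2}\ell$ or $\frac{5\ell+1}{2}$ and the constant term is an integer, again giving denominator dividing $2$ when $t$ is odd. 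Likewise $y_{N_t+1}$ and $y_{N_t+2}$ in \eqref{eq:RegularPhaseAdd} have denominator dividing $2$. Since $t$ is \emph{odd} in all of these, no extra factor of $2$ appears in numerators to cancel, so each of these numbers lies in $\frac{1}{4}\mathbb{Z}$; combined with $y_{t+2}=\frac{t^2}{4}$ actually achieving valuation $2$, this forces $\kappa(N_t+2)=2$, provided $N_t+2\geqslant t+2$, i.e.\ always, and provided the regular phase genuinely reaches index $N_t+2$, which is exactly the content of Lemma \ref{lemma:haltingproblem} together with \eqref{eq:RegularPhaseAdd}. Finally, the hypothesis $t\geqslant 7$ (rather than $t\geqslant 5$) is needed only to guarantee $L_t\geqslant 1$, so that at least one block of the form \eqref{eq:RegularPhase} occurs and $N_t>t+3$; for $t=5$ one has $L_5=0$ and the statement is vacuous or handled separately.

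The only real subtlety — and the step I expect to need the most care — is the parity bookkeeping: one must check that when $t$ is odd, the numerators of $\frac{t}{2}$, $\frac{\ell+1}{2}t+\ell^2+\ell$, $\frac{5\ell}{2}t+\dots$, etc., are themselves odd (or that no cancellation reduces the denominator below what is displayed), so that the displayed denominators $2$ and $4$ are exact in lowest terms; this is a short case analysis on the parity of $\ell$ and on $t\bmod 4$, but it is the place where the argument could go wrong if done carelessly. Everything else is direct substitution into formulas already proved.
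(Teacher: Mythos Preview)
Your approach is correct and is essentially the same as the paper's: both simply read off the denominators from the explicit formulas of Lemma~\ref{lemma:haltingproblem} and equation~\eqref{eq:RegularPhaseAdd}, noting that with $t$ odd the coefficients have denominators at most $4$, while $y_{t+2}=\tfrac{t^2}{4}$ attains denominator exactly $4$.

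Two small corrections to your write-up. First, in the case $n=t+4\ell+2$ and $n=t+4\ell+3$ you omitted the leading $\tfrac{t^2}{4}$ term when you wrote ``denominator dividing $2$''; the denominator there divides $4$, not $2$. Your subsequent conclusion that everything lies in $\tfrac{1}{4}\mathbb{Z}$ is nonetheless correct, so this is a slip in exposition rather than logic. Second, the ``parity bookkeeping'' you flag as the main subtlety is lighter than you suggest: for the upper bound $\kappa\leqslant 2$ you only need membership in $\tfrac{1}{4}\mathbb{Z}$, which follows directly from the displayed formulas without any parity analysis; for the lower bound $\kappa\geqslant 2$ you need exactly one parity check, namely that $t^2$ is odd, which is immediate. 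No case analysis on $\ell$ or on $t\bmod 4$ is required. (Your remark on $t\geqslant 7$ versus $t\geqslant 5$ is also slightly off: the bound $\kappa(N_t+2)=2$ actually holds for $t=5$ as well, since $N_5+2=10\geqslant t+2=7$; the paper's restriction to $t\geqslant 7$ is a convenience tied to the inductive part of Lemma~\ref{lemma:haltingproblem}, not a genuine necessity for this corollary.)
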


\section{Computations}\label{section:Computations}

We conclude by presenting an account of our computations of the limit function.
The existing results are those in \cite{CellarosiMunday}, where 
the strong terminating conjecture for $[0,x,1]$ was established in neighbourhoods 
of all $18$ rational numbers with denominator at most $18$ lying in the 
interval $I:=\left[\frac{1}{2},\frac{2}{3}\right]$. 
In these neighbourhoods, whose measure adds up to $11.75\%$ of the total,
the limit function is piecewise-affine with finitely many 
pieces and $26$ corners.

We have extended these computations considerably, determining neighbourhoods of quasi-regularity
around over $2000$ rational points and generating a sequence of lower 
bounds for the total variation of the limit function. 
Conjecture \ref{conj:Hausdorff}, stated at the end of the introduction,
represents a synthesis of our findings, which we now describe in some detail.

\subsection{Neighbourhoods of quasi-regularity}

By computing the orbit using exact rational arithmetic, we have established that the normal form orbit of order $t$ stabilises for $t\in\{5,\ldots,999\}$. This means that every 
proper active X-point $p$ with $\tau(p)\leqslant 1000$ has a neighbourhood 
in which the strong terminating conjecture holds.

This computation, however, does not yield an estimate of the size of these
neighbourhoods. We have implemented a procedure which associates
to every X-point $p$ a neighbourhood $U_p$ in which the strong terminating
conjecture holds, which we call the \textit{neighbourhood of quasi-regularity} of $p$.
Achieving optimal convergence for the total measure of the $U_p$s
requires ordering the X-points $p$ by the decreasing size of $U_p$. However, this ordering
is not available; indeed $\left|U_p\right|$ has only a tenuous connection with the \textit{height}\footnote{The maximum of the absolute value of the numerator and denominator.} of $p$ and with the transit time $\tau(p)$. We have therefore
adopted a hybrid approach.
\medskip

\noindent{\sc I: Ordering by transit time.} We have constructed the set
$$
\mathcal{X}_t=\left\{p\in I : p\text{ is an X-point
with }\tau(p)=t\right\}.
$$
for every odd integer $t$ with $5\leqslant t\leqslant 25$, by computing explicitly the bundle 
$\Xi_{23}$ and identifying all its active X-points.
Some of the data are
$$\mathcal{X}_9=\left\{\frac{7}{12}\right\},\quad
\mathcal{X}_{11}=\left\{\frac{9}{16},\frac{17}{27}\right\},\quad\text{and}\quad
\mathcal{X}_{13}=\left\{\frac{29}{54},\frac{67}{116},\frac{45}{76},\frac{19}{31},\frac{7}{11}\right\}.$$
The set 
$\mathcal{X}=\bigcup_{\substack{9\leqslant t\leqslant 25\\t\text{ odd}}}\mathcal{X}_t$
has $1176$ elements, all of which are X-points of rank $1$. 
The data
$$
\begin{array}{c|ccccccccc}
t &                 9 & 11 & 13 & 15 & 17 & 19 & 21 & 23 & 25\\
\hline
\left|\mathcal{X}_t\right| &   1 &  2 &  5 & 10 & 24 & 54 &127 &279 & 674
\end{array}
$$
suggest that $\left|\mathcal{X}_t\right|$ grows exponentially with $t$ (figure \ref{fig:regressions}):
\begin{equation}\label{eq:estimator1}
\left|\mathcal{X}_t\right| \approx 0.024 e^{0.41 t}.
\end{equation}

\begin{figure}
\centering
\begin{tikzpicture}
\begin{axis}[
        ymin=0,
        ymax=7.5,
        xmin=8.270270270,
        xmax=26.64864865,
        samples=100,
        xlabel=$t$,
        ylabel=$\ln \left|\mathcal{X}_t\right|$,
        width=12.5/16*8cm,
        height=12.5/16*6cm,
        axis lines=middle,
        x axis line style=->,
        y axis line style=->,        
        clip=false
]
\fill[red] (axis cs:9,0) circle (2pt);
\fill[red] (axis cs:11,.69315) circle (2pt);
\fill[red] (axis cs:13,1.6094) circle (2pt);
\fill[red] (axis cs:15,2.3026) circle (2pt);
\fill[red] (axis cs:17,3.1781) circle (2pt);
\fill[red] (axis cs:19,3.9890) circle (2pt);
\fill[red] (axis cs:21,4.8442) circle (2pt);
\fill[red] (axis cs:23,5.6312) circle (2pt);
\fill[red] (axis cs:25,6.5132) circle (2pt);
\addplot[thick,blue,domain=8.5:25.5] {.40852*x-3.7493};
\end{axis}
\end{tikzpicture}
\caption{\label{fig:regressions}\rm\small 
Least-square regression plot associated to \eqref{eq:estimator1}.}
\end{figure}
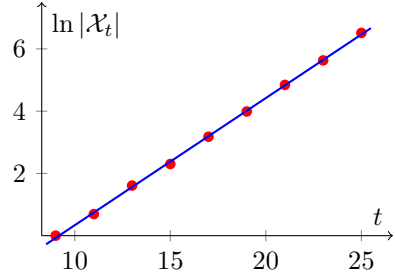

Given $p$, we specify $U_p$ as follows. After generating the required data associated to $p$, we check the validity of the following conditions on the right-hand side of $p$, as well as 
analogous conditions on the left-hand side (notation as in theorem \ref{thm:Generalrk1} and figure \ref{fig:Rank2NearRank1}).
\begin{itemize}
\item[i)] For every distinct $\overline{Y},\overline{\overline{Y}}\in\Xi_{\tau(p)-1}$ we have $\overline{Y}\bowtie\overline{\overline{Y}}\notin\left(p,\dddot{p\hspace{0cm}}\right)$.
\item[ii)] For every $\overline{Y}\in\Xi_{\tau(p)-1}\backslash[Y]$ we have either $\overline{Y}\left(\ddot{p}\right)<Y\left(\ddot{p}\right)$ or $\overline{Y}\left(p'\right)>Y^\ast\left(p'\right)$.
\item[iii)] In $\left(\ddot{p},\dddot{p\hspace{0cm}}\right)$ we have $p'<Y_{\tau\left(\dddot{p\hspace{0cm}}\right)}\bowtie Y^\ast$.
\end{itemize}

Out of all $1176$ X-points, $1092$ satisfy all three conditions on both sides, 
and for these points theorem \ref{thm:Generalrk1} applies to both sides. 
For such X-points $p$, we let $U_p$ be the neighbourhood given by the theorem.

Of the remaining $84$ X-points, $52$ only violate iii), but on both sides. 
On each side, theorem \ref{thm:Generalrk1} enables us to conclude that the strong terminating conjecture holds up to the 
second-to-last term of the auxiliary sequence. 
Therefore, $U_p$ can also be specified without difficulty.

None of the remaining $32$ X-points violate ii); they violate either i) only, 
or both i) and iii), on at least one side. On the right-hand side, 
if i) is violated, then regardless of whether iii) is violated, 
theorem \ref{thm:Generalrk1} does not apply. However, we know that the strong terminating conjecture 
holds in the interval $\left(p,\overline{p}\right)$, where
$$
\overline{p}:=\min\left\{\overline{Y}\bowtie \overline{\overline{Y}}\in (p,\dddot{p\hspace{0cm}}):
            \overline{Y},\overline{\overline{Y}}\in \Xi_{\tau(p)-1}\right\}.
$$
Treating the left-hand side similarly, we obtain $U_p$.

The total measure $\sum_{p\in\mathcal{X}}\left|U_p\right|$
of these neighbourhoods of quasi-regularity amounts to $3.12\%$ of the length of $I$. 
Combining this with the result near $\frac{1}{2}$ obtained in \cite{CellarosiMunday}, which covers $7.29\%$, and the result near $\frac{2}{3}$ in \eqref{eq:near2/3}, which covers $2.54\%$, we conclude that the strong terminating conjecture 
holds over $12.95\%$ of the domain $I$. These neighbourhoods of $1178$ rational numbers contain $7918$ corners of the limit function.\smallskip

\noindent{\sc II: Ordering by height.} An exhaustive search of X-points with larger transit time is not feasible. 
Considering that some rationals in $\mathcal{X}$ have a very large 
height (the largest being $571460$), 
we have supplemented the above data with a selected collection of X-points of higher
transit time, having moderate height.
Specifically, we have considered the set
$$
\left\{p\in\mathcal{F}_{5000}\cap I:
p\text{ is an X-point with }\tau(p)\in\{27,29,31\}\right\},
$$
where $\mathcal{F}_{5000}$ is the set of all fractions with denominator at most $5000$.
This set contains $1618$ active X-points, all of rank 1.
Of those, $1616$ satisfy all three conditions i), ii), iii) on both sides, and
hence theorem \ref{thm:Generalrk1} applies, and we let $U_p$ be the neighbourhood given
by the theorem. It is worth mentioning that $3$ of these $1616$ points are not proper. Each of these X-points is a transversal intersection of three distinct functions appearing before the X-point stabilises, and has neither an inherited symmetry nor a connection to the normal form. The local stabilisation is thus verified
by evaluating \mmm\ orbits at suitable points (cf.~establishment of theorem \ref{thm:0x11} in the paragraph preceding it).
The remaining two X-points are:
\begin{itemize}
\item $p=\frac{708}{1273}$ with $\tau(p)=29$, which violates iii) on the right-hand side;
theorem \ref{thm:Generalrk1} only enables us to conclude that the strong 
terminating conjecture holds up to the second-to-last term $\ddot{p}$ of the 
auxiliary sequence; so the right part of $U_p$ is replaced by 
$\left(p,\frac{722540}{1299143}\right)$.
\item $p=\frac{1913}{3452}$ with $\tau(p)=27$, which violates ii) on the right-hand side; 
the function $Y_{25}$ intersects the limit function 
at $\frac{20613}{37196}$, and so the right part of $U_p$ is replaced by 
$\left(p,\frac{20613}{37196}\right)$.
\end{itemize}
This additional computation adds $1618$ to the number of X-points, $11321$ to the number of 
corners of the limit function, but only $0.21\%$ to the total measure. 
In conclusion, we have proved that the strong terminating 
conjecture holds for $[0,x,1]$ in specified neighbourhoods of $2794$ X-points which cover $13.16\%$ 
of the domain $I$ and contain $19239$ corners 
of the limit function.

From this result, it seems plain that the neighbourhoods of quasi-regularity associated with the
X-points do not account for the full measure.
At the same time, the available evidence strongly suggests that these neighbourhoods form 
a \textit{dense} subset of $I$.

\subsection{Total variation of limit function and box dimension of its graph}

\begin{figure}[t]
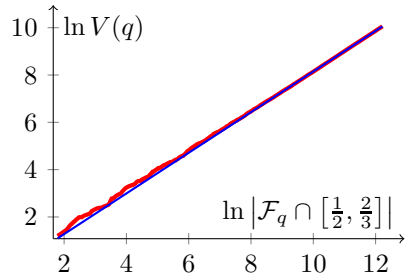

\centering
\include{fig-TotVar}
\caption{\label{fig:TotVar}
\rm\small Log-log plot of the variation of the limit function with respect to the Farey
partition versus the size of the partition. The slope of the line is approximately $0.86$.
}
\end{figure}

To gain insight on the region outside the neighbourhoods of quasi-regularity, we turn our attention to the \textit{total 
variation} \cite[section 6.3]{RoydenFitzpatrick} of the limit function.
We have computed the variation $V(q)$ of the limit function with respect to the Farey partition 
$\mathcal{F}_q\cap I$ ---the set of fractions with denominator up to $q$ in $I$--- for $q\in\{3,\ldots,2000\}$,
with the finest partition containing $202,768$ points. The data show a steady
algebraic growth of $V(q)$, with exponent $\alpha\approx 0.86$ (figure \ref{fig:TotVar}). 

The Farey partition is only approximately uniform.
For additional evidence, and to afford a dimension estimate, we
have computed the variation of the limit function with respect to 
the partition $\frac{1}{2^i}\mathbb{Z}\cap I$ for $i\in\{3,\ldots,14\}$.
Consistently, the data show an algebraic growth with a similar exponent $\alpha\approx 0.85$.

To connect variation to dimension, let us consider a uniform partition of $I$ into $n\in\mathbb{N}$ 
intervals of length $\epsilon_n:=\frac{1}{6n}$, namely $\left\{x_i\right\}_{i=0}^n$ with $x_i:=\frac{1}{2}+i\epsilon_n$ for every $i\in\{0,\ldots,n\}$. Letting $V'(n)$ be the variation of the limit function with respect to this partition, we find that
$$\frac{V'(n)}{\epsilon_n}=
 \sum_{i=1}^n\frac{\left|m\left(x_i\right)-m\left(x_{i-1}\right)\right|}{\epsilon_n}
  \xrightarrow{n\to\infty}N\left(\epsilon_n\right),$$
where $N\left(\epsilon_n\right)$ denotes the minimum number of $\epsilon_n$-boxes needed to cover the graph $$\mathcal{G}:=\{(x,m(x)):x\in I\},$$ assuming continuity. Consequently, if $V'(n)\sim cn^\alpha$ for some $c,\alpha>0$, then the
box dimension of $\mathcal{G}$ is given by
$$\text{dim}_B(\mathcal{G})=\lim_{n\to\infty}\frac{\ln N\left(\epsilon_n\right)}{-\ln \epsilon_n}=
 \lim_{n\to\infty}\frac{\ln\frac{V'(n)}{\epsilon_n}}{-\ln \epsilon_n}
=1+\lim_{n\to\infty}\frac{\ln cn^\alpha}{\ln 6n}
=1+\alpha.$$

We believe that the fractional dimension of the limit function originates entirely from the region outside the neighbourhoods of quasi-regularity.
Indeed, the portion of $\mathcal{G}$ corresponding to the neighbourhoods of quasi-regularity
can be modelled as a family of non-overlapping V-shapes, constructed recursively in such 
a way that their number grows exponentially, while their width decreases exponentially. 
Regardless of the growth of the height of these V-shapes, one finds that the resulting
set has box dimension one. This motivates our conjecture \ref{conj:Hausdorff}.

Finally, we have detected no evidence of multifractal behaviour. Our computation of the variation of the limit function in a few smaller subintervals has given algebraic growth with similar exponents.


\bigskip\noindent
{\sc Acknowledgements:} \/ 
The first author thanks Indonesia Endowment Fund for Education (LPDP) for the financial support.

\end{document}